\newcommand{\unr}{\operatorname{unr}}
\newcommand{\C}{{\mathbb C}}
\newcommand{\PP}{{\mathbb P}}
\newcommand{\Q}{{\mathbb Q}}
\newcommand{\Z}{{\mathbb Z}}
\newcommand{\Zbar}{{\overline{\Z}}}
\newcommand{\Kbar}{{\overline{K}}}
\newcommand{\Fbar}{{\overline{F}}}
\newcommand\td\widetilde
\newcommand{\FF}{{\mathcal F}}
\newcommand{\LL}{{\mathcal L}}
\newcommand{\OO}{{\mathcal O}}
\def\presuper#1#2%
\def\Q{\mathbb{Q}}
\def\C{\mathbb{C}}
\def\P{\mathbb{P}}
\def\Z{\mathbb{Z}}
\def\<{\ensuremath{\langle}}
\def\>{\ensuremath{\rangle}}
 \DeclareMathOperator{\im}{im}
 \DeclareMathOperator{\Hom}{Hom}
 \DeclareMathOperator{\Aut}{Aut}
\DeclareMathOperator{\ord}{ord} 
\DeclareMathOperator{\Div}{Div} \DeclareMathOperator{\Pic}{Pic}
\DeclareMathOperator{\Spec}{Spec}
\newcommand{\GL}{\operatorname{GL}}
\newcommand{\SL}{\operatorname{SL}}
\newcommand\djunion\amalg
\numberwithin{equation}{section}
\newtheorem{theorem}{Theorem}[subsection]
\newtheorem{lemma}[theorem]{Lemma}
\newtheorem{claim}[theorem]{Claim}
\theoremstyle{definition}
\newtheorem{definition}[theorem]{Definition}
\theoremstyle{remark}
\newtheorem{remark}[theorem]{Remark}
\newcommand{\QQ}{\mathbb{Q}}
\newcommand{\ZZ}{\mathbb{Z}}
\newcommand{\RR}{\mathbb{R}}
\renewcommand{\FF}{\mathbb{F}}
\newcommand{\NN}{\mathbb{N}}
\newcommand{\Diff}{\operatorname{Diff}}
\newcommand{\Disc}{\operatorname{Disc}}
\newcommand{\diff}{\operatorname{diff}}
\newcommand{\QQbar}{\overline{\mathbb{Q}}}
\newcommand{\CC}{\mathbb{C}}
\newcommand{\ann}{\operatorname{ann}}
\newcommand{\hull}{\operatorname{hull}}
\newcommand{\Ical}{\mathcal{I}}
\newcommand{\vu}{\underline{v}}
\newcommand{\Vu}{\underline{V}}
\newcommand{\Cond}{\operatorname{Cond}}
\newcommand{\ram}{\operatorname{ram}}
\newcommand{\indone}{\operatorname{ind1}}
\newcommand{\indtwo}{\operatorname{ind2}}
\newcommand{\indthree}{\operatorname{ind3}}
\newcommand{\quu}{\underline{\underline{q}}}
\newcommand{\E}{\mathbb{E}}
\newcommand{\LGP}{\operatorname{LGP}}
\newcommand{\lgp}{\operatorname{lgp}}
\newcommand{\nlogn}{\overline{\log\nu}}
\newcommand{\deghat}{\widehat{\deg}}
\newcommand{\deghatu}{\underline{\widehat{\deg}}}
\newcommand{\Divhat}{\widehat{\operatorname{Div}}}
\newcommand{\bad}{\operatorname{bad}}
\newcommand{\Stab}{\operatorname{Stab}}
\newcommand*{\da@rightarrow}{\mathchar"0\hexnumber@\symAMSa 4B }
\newcommand*{\da@leftarrow}{\mathchar"0\hexnumber@\symAMSa 4C }
\newcommand*{\xdashrightarrow}[2][]{%
	\mathrel{%
		\mathpalette{\da@xarrow{#1}{#2}{}\da@rightarrow{\,}{}}{}%
	}%
}
\newcommand{\xdashleftarrow}[2][]{%
	\mathrel{%
		\mathpalette{\da@xarrow{#1}{#2}\da@leftarrow{}{}{\,}}{}%
	}%
}
\newcommand*{\da@xarrow}[7]{%
	% #1: below
	% #2: above
	% #3: arrow left
	% #4: arrow right
	% #5: space left 
	% #6: space right
	% #7: math style 
	\sbox0{$\ifx#7\scriptstyle\scriptscriptstyle\else\scriptstyle\fi#5#1#6\m@th$}%
	\sbox2{$\ifx#7\scriptstyle\scriptscriptstyle\else\scriptstyle\fi#5#2#6\m@th$}%
	\sbox4{$#7\dabar@\m@th$}%
	\dimen@=\wd0 %
	\ifdim\wd2 >\dimen@
	\dimen@=\wd2 %   
	\fi
	\count@=2 %
	\def\da@bars{\dabar@\dabar@}%
	\@whiledim\count@\wd4<\dimen@\do{%
		\advance\count@\@ne
		\expandafter\def\expandafter\da@bars\expandafter{%
			\da@bars
			\dabar@ 
		}%
	}%  
	\mathrel{#3}%
	\mathrel{%   
		\mathop{\da@bars}\limits
		\ifx\\#1\\%
		\else
		_{\copy0}%
		\fi
		\ifx\\#2\\%
		\else
		^{\copy2}%
		\fi
	}%   
	\mathrel{#4}%
}
\newcommand\subsetsim{\mathrel{%
			\ooalign{\raise0.2ex\hbox{$\subset$}\cr\hidewidth\raise-0.8ex\hbox{\scalebox{0.9}{$\sim$}}\hidewidth\cr}}}
\newcommand{\logmubar}{\overline{\ln \mu}}
\newcommand{\rad}{\operatorname{rad}}
\newcommand{\epsu}{\underline{\epsilon}}
\renewcommand{\mod}{\operatorname{mod}}
\newcommand{\Mu}{\underline{M}}
\newcommand{\Peel}{\operatorname{Peel}}
\renewcommand{\LL}{\mathbb{L}}
\newcommand{\id}{\operatorname{id}}
\newcommand{\Int}{\operatorname{Int}}
\newcommand{\OObar}{\overline{\mathcal{O}}}
\newcommand{\Tbar}{\overline{T}}
\newcommand{\betabar}{\overline{\beta}}
\newcommand{\Lbar}{\overline{L}}
\newcommand{\vbar}{\overline{v}}
\newcommand{\ch}{\operatorname{char}}
\renewcommand{\AA}{\mathbb{A}}
\newcommand{\EE}{\mathbb{E}}
\newcommand{\Irm}{\operatorname{I}}
\newcommand{\IIrm}{\operatorname{II}}
\newcommand{\IIIrm}{\operatorname{III}}
\newcommand{\IVrm}{\operatorname{IV}}
\newcommand{\Vrm}{\operatorname{V}}
\newcommand{\Diffbar}{\overline{\operatorname{Diff}}}
\newcommand{\diffbar}{\overline{\diff}}
\newcommand{\ebar}{\overline{e}}
\newcommand{\Ubar}{\overline{U}}
\begin{document}
%\pagewiselinenumbers

%%%%%%%%%%%%%%%%%%%%%%%%%%%%%%%%%%%%%%%%%%%
%%%%%%%%%%%%%% TITLE 
%%%%%%%%%%%%%%%%%%%%%%%%%%%%%%%%%%%%%%%%%%%
\title{Probabilistic Szpiro, Baby Szpiro, and Explicit Szpiro from Mochizuki's Corollary 3.12 %\ \\ \ \\
%\emph{DRAFT} \ \\
} 
\author{Taylor Dupuy and Anton Hilado}

%Fill in the crap below with your information and delete this comment. 

\date{\today}
\thanks{}

\begin{abstract}
In \cite{Dupuy2020a} we gave some explicit formulas for the ``indeterminacies'' $\indone,\indtwo,\indthree$ in Mochizuki's Inequality as well as a new presentation of initial theta data.
In the present paper we use these explicit formulas, together with our probabilistic formulation of \cite[Corollary 3.12]{IUT3} to derive variants of Szpiro's inequality (in the spirit of \cite{IUT4}). 
In particular, for an elliptic curve in initial theta data we show how to derive uniform Szpiro (with explicit numerical constants). 
The inequalities we get will be strictly weaker than \cite[Theorem 1.10]{IUT4} but the proofs are more transparent, modifiable, and user friendly.
All of these inequalities are derived from an probabilistic version of \cite[Corollary 3.12]{IUT3} formulated in \cite{Dupuy2020a} based on the notion of random measurable sets. 
\end{abstract}

\maketitle
\setcounter{tocdepth}{1}
\tableofcontents

\section{Introduction}\label{S:introduction}

In \cite{Dupuy2020a} we gave a probabilistic interpretation of Mochizuki's Inequality (Corollary 3.12 of \cite{IUT3}). 
In the present paper we perform some explicit computations using this inequality to derive three inequalities which we will call ``Probabilistic Szpiro'', ``Baby Szpiro'', and ``Explicit Szpiro''.
All of these inequalities depend on hypothesis of an elliptic curve being in ``initial theta data built from the field of moduli'', \cite[Corollary 3.12]{IUT3}, and some assumed behavior at the archimedean place stated in Claim~\ref{L:arch-log-bounds}.

In order to state our results we need to talk about initial theta data.
As formulated in \S5 of \cite{Dupuy2020a}, initial theta data is a tuple 
 $$(\Fbar/F, E_F, l, \Mu, \Vu, V^{\bad}_{\mod}, \epsu)$$ 
surrounding an elliptic curve $E=E_F$ over a field $F$ satisfying various conditions which are not important for the purposes of the introduction (the curious reader should consult \cite[\S5]{Dupuy2020a}). 
What is important are the data types of the tuple: the entry $l$ is a fixed prime and, in the present paper, the choices of $\Mu$, and $\epsu$ will be irrelevant. 
We will discuss the sets of places $\Vu$ and $V^{\bad}_{\mod}$ momentarily.
This requires some set up. 

In order to define the sets of places $V^{\bad}_{\mod}$ and $\Vu$ we need to introduce the fields $F_0$ and $K$ to which they belong. 
The field $F_0$ is the \emph{field of Moduli} of the elliptic curve defined by 
$$F_0 := \QQ(j_E),$$ 
in Mochizuki's notation this is $F_{\mod}$.  
The field $K$ is the $l$-division field of $F$ given by
\begin{equation}\label{eqn:defn-of-k}
K := F(E[l]),
\end{equation}
obtained by adjoining the $l$-torsion of $E(\Fbar)$ to $F$.
In this paper, for any field $L$ we will let $V(L)$ denote the collection of places of $L$ and for any non-archimedean place $v \in V(L)$ we will let $\kappa(v)$ denote the residue field and $L_v$ denote the completion of $L$ at $v$.

We now come to the definitions of $V^{\bad}_{\mod}$ and $\Vu$. 
First $V^{\bad}_{\mod}\subset V(F_0)$ is a non-empty set of bad multiplicative places over the field of moduli: for every $E_0$ an elliptic curve over $F_0$ such that $E \cong E_0\otimes_{F_0}F$, if $v\in V^{\bad}_{\mod}$, then $E_0$ has multiplicative reduction at $v$. 
Next, the set $\Vu \subset V(K)$ is a set that maps bijectively to $V(F_0)$ under the natural map $V(K) \to V(F_0)$.

We will be using these quantities momentarily but first we need to describe a special type of initial theta data that will be used in the course of this manuscript. 
For computational purposes one can always take an elliptic curve over its field of moduli (satisfying some mild conditions) and base change this field to a larger field to obtain some curve that can be put in initial theta data. 
We call such theta data ``built from the field of moduli''. 
The precise definition is below.  
\begin{definition}
\label{S:initial-theta-data} Let $E/F$ be an elliptic curve inside initial theta data 
$$(\Fbar/F, E_F, l, \Mu, \Vu, V^{\bad}_{\mod}, \epsu).$$ 
We will say that such a tuple is \emph{built from the field of moduli} provided $E=E_0 \otimes_{F_0} F$ where $F_0=\QQ(j_E)$ is the field of moduli of $E$, $E_0$ is a model of $E$ over $F_0$, $F := F_0(\sqrt{-1},E_0[30]),$
and $V^{\bad}_{\mod} \subset V(F_0)$ is the full set of places of multiplicative reduction.\footnote{The definition of initial theta data in \S5 of \cite{Dupuy2020a} precludes bad places from having residue characteristic two. Also the set of bad places needs to be nonempty. }
\end{definition}
%In what follows we make use of the field extension 
%$$ K := F(E[l]) \supset F, $$
%where $l$ is the prime appearing in initial theta data. 
%We in later sections we make use of the extensions
%$$ K := F(E[l]) \supset F := F_2(E_0[15]) =\supset F_2' := F_0(\sqrt{-1}, E_0[2]) \supset F_0 \supset \QQ $$
%for the auxillary prime $l$ in initial theta data.
%Note that in Mochizuki's notation $F_0 = F_{\mod}$.
We will often use the notation
$$ d_0 := [F_0:\QQ]. $$
The constants in our Szpiro-like inequalities will depend on this degree.
%Also, note that in this document we are building up the initial theta data as in \cite{IUT4} that is 
%we start with and elliptic curve $E_0$ defined over a field of moduli then form the field $F = F_0(\sqrt{-1}, E_0[30l])$ then assume that we have a tuple 
In stating our results we recall from \cite{Dupuy2020a} that for a rational prime $p$ that $V(F_0)_p$ is given the structure of a probability space where $\Pr: V(F_0)_p \to [0,1]$ is defined by 
 $$ \Pr(v):= \frac{[F_{0,v}: \QQ_p]}{[F_0:\QQ_p]}.$$
Using $\Vu$ one can define some interesting probabilistic quantities which appear in our Probabilistic Szpiro inequality and give a good sense of the types of things that Mochizuki's inequality ``knows about''.
\begin{definition}
\begin{enumerate}
	\item The \emph{probability that $\vu \in \Vu_p$ is unramified} is 
	\begin{equation}
	\PP_{\unr,p} = \sum_{w\in \lbrace v \in V(F_0)_p : e(\vu/p)=1 \rbrace } \Pr(w). 
	\end{equation}
	\item The \emph{average ramification degree} of $\vu \in \Vu_p$ is defined to be 
	\begin{equation}
	\ebar_p = \EE( e(\vu/p)). 
	\end{equation}
	\item The \emph{average different of $\Vu/\QQ$} is defined to be
	\begin{equation}\label{E:avg-diff}
	\Diffbar(\Vu/\QQ) = \prod_p p^{\diffbar_p}.
	\end{equation}
	In \eqref{E:avg-diff} we have $\diffbar_p = \log_p( \EE(p^{\diff(\vu/p)}))$ and $\diff(\vu/p) = \ord_p( \Diff(K_{\vu}/\QQ_p))$; $\Diff(K_{\vu}/\QQ_p)$ is the different of $K_{\vu}$ over $\QQ_p$.
\end{enumerate}
\end{definition}

Using these quantities we can now state the Probabilistic Szpiro.
\begin{theorem}[Probabilistic Szpiro]\label{T:probabilistic-szpiro}
Assume \cite[Corollary 3.12]{IUT3} and Claim \ref{L:arch-log-bounds}. For any elliptic curve $E/F$ in initial theta data $(\Fbar/F, E_F, l, \Mu, \Vu, V^{\bad}_{\mod}, \epsu)$ built from the field of moduli we have 
	\begin{equation}\label{E:prob-szpiro}
	\frac{1}{6+\varepsilon_l}\frac{\ln \vert \Delta^{\min}_{E/F} \vert }{[F:\QQ]} \leq \ln \overline{\Diff}(\Vu/\QQ) + \sum_{p} \ln(\overline{e}_p) + A_{l,\Vu}
	\end{equation}
	where 
	 $$A_{l,\Vu}= \ln(\pi) + \sum_p (1 - \PP_{\unr,p}^{l+1/2}) \left (\ln(b_p) + \frac{5}{l+4} \right ) ,$$
	and $b_p = 1/\exp(1)\ln(p)$, and $\varepsilon_l = 24(l+3)/(l^2+l-12)$.
	%Here $\pi = 3.14159265\ldots$.
\end{theorem}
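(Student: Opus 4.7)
The plan is to derive the inequality from the probabilistic version of \cite[Corollary 3.12]{IUT3} proved in \cite{Dupuy2020a}, together with the explicit indeterminacy formulas from that paper and the archimedean bound of Claim~\ref{L:arch-log-bounds}. The probabilistic inequality has the schematic shape
\[
 -\EE[\deghatu(\Thetauu)] \;\leq\; -c(l)\,\EE[\deghatu(\quunder)] + \text{contributions from } \indone,\,\indtwo,\,\indthree,
\]
and $\EE[\deghatu(\quunder)]$ restricted to the multiplicative places over $V^{\bad}_{\mod}$ recovers $\ln|\Delta^{\min}_{E/F}|/[F:\QQ]$ up to a universal scalar, since $\ord_{\vu}(q_{\vu})$ at such a place is the valuation of the minimal discriminant. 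The first step is to identify $c(l)$: carrying the $\Thetauu$-pilot through the theta link and peeling off the $2l$-th root extraction relating $\Thetauu$ to $\quunder$ gives
\[
 c(l) = \frac{(l-3)(l+4)}{6l(l+5)} = \frac{1}{6+\varepsilon_l},\qquad \varepsilon_l = \frac{24(l+3)}{l^2+l-12},
\]
using the factorization $l^2+l-12=(l+4)(l-3)$; this is the prefactor appearing on the left of \eqref{E:prob-szpiro}.

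Second, I would expand the right-hand-side indeterminacies using the explicit formulas from \cite{Dupuy2020a}. The non-archimedean parts of $\indone$ and $\indthree$ produce, respectively, the averaged log-different and the sum of $\ln\overline{e}_p$, while the hull-and-union $\indtwo$ at non-archimedean places introduces no new positive contributions once one has enlarged by the log-shells controlled by the different and the ramification. Averaging place-by-place against $\Pr$ on $V(F_0)_p$, and using the definitions of $\overline{\Diff}(\Vu/\QQ)$ and $\overline{e}_p$, consolidates the non-archimedean side into $\ln\overline{\Diff}(\Vu/\QQ) + \sum_p \ln\overline{e}_p$. The hypothesis that the theta data is built from the field of moduli ensures $F\supset F_0(\sqrt{-1},E_0[30])$, so the probability structure on $V(F_0)_p$ pulls back along the required bijection $\Vu\to V(F_0)$, making these averages well defined.

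Third, I would apply Claim~\ref{L:arch-log-bounds} at the archimedean places. This provides, for each rational prime $p$ at which some $\vu\in\Vu_p$ fails to be unramified, a bound of shape $\ln b_p+5/(l+4)$ on the archimedean contribution to the theta link; the weight $(1-\PP_{\unr,p}^{l+1/2})$ is exactly the probability that this correction is active, raised to the $\theta$-pilot exponent $l+1/2$. Adding the single $\ln(\pi)$ term that normalizes the archimedean log-shell reproduces $A_{l,\Vu}$; dividing by $c(l)$ and combining with steps one and two yields \eqref{E:prob-szpiro}.

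The main obstacle I anticipate is in step two: translating the place-by-place explicit indeterminacy estimates of \cite{Dupuy2020a} into the probabilistic expectations $\overline{\Diff}(\Vu/\QQ)$ and $\overline{e}_p$. One must verify that the hull-and-union operation $\indtwo$ commutes with expectations over random measurable sets in $\Vu_p$ without introducing hidden $l$-dependent constants, and that the correct averaging is multiplicative for the different (justifying $\diffbar_p=\log_p\EE(p^{\diff(\vu/p)})$) but additive for the ramification. This is presumably why \cite{Dupuy2020a} formulates the probabilistic Corollary~3.12 in terms of random measurable sets rather than scalar random variables.
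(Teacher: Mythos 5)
Your step 1 is numerically correct: $c(l)=\tfrac{(l+4)(l-3)}{6l(l+5)}=\tfrac{1}{6+\varepsilon_l}$ does arise from combining the theta--$q$ pilot relation $\deghatu_{\lgp,F_0}(P_{\Theta})=\tfrac{l(l+1)}{12}\deghatu_{F_0}(P_q)$, the $2l$ factor in $\deghatu_{F_0}(P_q)=\ln|\Delta^{\min}_{E/F}|/(2l[F:\QQ])$, and a division by $\tfrac{l+5}{4}$ (the uniform average of $j+1$ over $1\le j\le(l-1)/2$). But your step 3 contains a genuine misattribution that would make the argument fail if carried out literally. You assign the terms $\ln b_p+\tfrac{5}{l+4}$, weighted by $1-\PP_{\unr,p}^{(l+1)/2}$, to the archimedean places and to Claim~\ref{L:arch-log-bounds}. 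In fact Claim~\ref{L:arch-log-bounds} contributes exactly one term, the $\ln(\pi)$ appearing in $A_{l,\Vu}$, arising from $H_{\vec{\vu}}\subset D(0,\pi^{j+1})$ averaged over $j$. The quantities $b_p=1/(\exp(1)\ln p)$ and the factor $1-\PP_{\unr,p}^{(l+1)/2}$ are purely non-archimedean: $b_p$ is the constant in the $p$-adic logarithm bound of Lemma~\ref{L:log-bounds} (giving the radius of the hull of the tensor product of log-shells), and $1-\PP_{\unr,p}^{(l+1)/2}$ is the probability that a random tuple $\vec{\vu}\in\Vu_p^{j+1}$ is ramified (the smallest unramified-tuple probability is $\PP_{\unr,p}^{(l+1)/2}$, from the longest tuples). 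Nothing archimedean is involved.

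The paper's route is to decompose the hull radius bound $\ln R_{\vec{\vu}}$ (from Lemma~\ref{L:radius}) into five pieces $\Irm_p+\IIrm_p+\IIIrm_p+\IVrm_p+\Vrm_p$, estimate each in terms of probabilistic invariants of $\Vu$ via Jensen's inequality and independence, and feed these back into the tautological inequality $-\deghatu_{F_0}(P_q)\le\sum_p\EE_p^2(\ln R_{\vec{\vu}})$. In this decomposition, $\Irm$ reproduces $-\deghatu_{\lgp,F_0}(P_\Theta)$ and hence the $c(l)$ on the left; $\IIrm$ (via Lemma~\ref{L:part2-lemmas} and Jensen) gives $\tfrac{l+1}{4}\overline{\diff}_p\ln p$ and hence $\ln\Diffbar(\Vu/\QQ)$ after dividing by $\tfrac{l+5}{4}$; $\IIIrm$ gives the ramification indicator $1-\PP_{\unr,p}^{(l+1)/2}$; $\IVrm$ gives $\tfrac{l+5}{4}\ln(b_p)(1-\PP_{\unr,p}^{(l+1)/2})$; and $\Vrm$ (again via Jensen and independence) gives $\tfrac{l+5}{4}\ln\overline{e}_p$. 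Your step 2 also misattributes the origin of the different and $\overline{e}_p$ to $\indone$ and $\indthree$: the different term actually comes from the comparison $\bigotimes_i\OO_{K_i}$ versus $\OO_L$ (Theorem~\ref{T:descent}), and $\overline{e}_p$ comes from the log-shell containment $\log(\OO_K^\times)\subset D(0,e(K/\QQ_p)/(\exp(1)\ln p))$ in Lemma~\ref{L:log-bounds2}; the indeterminacies $\indone,\indtwo$ enter only as the group $\Aut(L:\Ical)$ which stabilizes the tensor product of log-shells, and $\indthree$ only supplies the union over $a^{\NN}$. To repair the proposal you would need to separate the non-archimedean hull-radius estimate into these five components and track which piece each closed-form bound comes from; as written, putting the $b_p$ and $\PP_{\unr,p}$ terms into the archimedean side leaves the non-archimedean side unbounded and the inequality unproved.
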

\iffalse 
\begin{remark}
An interesting part about this ``Probabilistic Szpiro'' inequality is the term $A_{l,\Vu}$. 
Its nonconstant part is made up of two terms which are roughly proportional 
$\# \lbrace p : \exists \vu\in\Vu_p \mbox{ such that } e(\vu/p)>1 \rbrace $. 
The interesting part is that the main contribution of this is negative as $b_p<1$ for $p>2$.
That means this term sort-of takes away from the logarithmic ramification indices. 	
%Also, note that because $K$ is Galois over $F_0$ the choice of $\Vu$ doesn't matter so much in the definition of these invariants.
\end{remark}
\fi
From the Probabilistic Szpiro Inequality we can derive a ``Baby Szpiro'' Inequality. 
This inequality only depends on discriminant and degree of the division field $K$.
This inequality can be derived quickly dispensing with a discussion of ramification of the mod $l$ Galois representation and its relation to the conductor (which is reviewed in \S \ref{S:geometry-and-galois}).
In what follows for an ideal $I$ in a ring of integers $R$ we let $\vert I \vert$ denote the absolute norm. 
\begin{theorem}[Baby Szpiro]
	Assume \cite[Corollary 3.12]{IUT3} and Claim \ref{L:arch-log-bounds}.
	Then for any elliptic curve $E/F$ in initial theta data $(\Fbar/F, E_F, l, \Mu, \Vu, V^{\bad}_{\mod}, \epsu)$ built from the field of moduli we have  
	\begin{equation}
		\frac{1}{6+\varepsilon_l} \frac{\ln \vert \Delta^{\min}_{E/F}\vert }{[F:\Q]} \leq \ln( [K:\Q]^{5/4}) \ln( \vert \Disc(K/\Q)\vert^{5/4}) + \ln(\pi).
	\end{equation}
 	In the above formula $\varepsilon_l = (24l+72)/(l^2+l-12)$ and $K=\QQ(j_E,E[30l],\sqrt{-1})$.
\end{theorem}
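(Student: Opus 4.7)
The plan is to start from Probabilistic Szpiro (Theorem~\ref{T:probabilistic-szpiro}) and crudely upper-bound each of the probabilistic quantities on its right-hand side by a more elementary global invariant of the division field $K = \QQ(j_E, E[30l], \sqrt{-1})$. Since the left-hand sides of the two Szpiro statements coincide, this reduces the proof to dominating
\[
\ln \overline{\Diff}(\Vu/\QQ) + \sum_p \ln \ebar_p + A_{l,\Vu}
\]
by the right-hand side of the Baby Szpiro inequality.

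For the average different, I would invoke the elementary bound $\EE(X) \leq \max(X)$ for any random variable on a finite probability space, applied with $X = p^{\diff(\vu/p)}$, to obtain $\overline{\diff}_p \leq \max_{\vu \in \Vu_p} \diff(\vu/p)$. The classical product formula $\ln \vert \Disc(K/\QQ) \vert = \sum_p (\ln p) \sum_{\vu \mid p} f(\vu/p)\,\diff(\vu/p)$, combined with $f(\vu/p) \geq 1$ and nonnegativity of $\diff(\vu/p)$, then gives $\ln \overline{\Diff}(\Vu/\QQ) \leq \ln\vert \Disc(K/\QQ)\vert$. For the ramification sum, the same $\EE \leq \max$ principle yields $\ebar_p \leq [K:\QQ]$, and the summand is supported on the finitely many rational primes below ramified places of $\Vu$, a subset of the prime divisors of $\Disc(K/\QQ)$. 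Since an integer $N$ has at most $\log_2 \vert N \vert$ prime divisors, one obtains $\sum_p \ln \ebar_p \leq \log_2\vert\Disc(K/\QQ)\vert \cdot \ln[K:\QQ]$, which is comfortably dominated by $\ln([K:\QQ]^{5/4}) \cdot \ln(\vert \Disc(K/\QQ)\vert^{5/4})$; this is presumably the reason the target inequality is stated as a product of two logarithms rather than a simple sum. For the arithmetic constant, the factor $1-\PP_{\unr,p}^{l+1/2}$ vanishes at unramified primes, and $\ln b_p = -1 - \ln\ln p < 0$ for $p \geq 3$, so after a sign analysis the entire contribution of $A_{l,\Vu}$ is absorbed into $\ln\pi$.

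The main obstacle will be the final bookkeeping: matching the rough bounds (which separately give $\ln\vert \Disc(K/\QQ)\vert$ and $\log_2 \vert\Disc(K/\QQ)\vert \cdot \ln[K:\QQ]$) into the slightly unusual product-of-logarithms shape on the right-hand side, and verifying that the $5/4$ exponents absorb all the lower-order slack coming from the crude $\EE \leq \max$ estimates and the prime-counting step. Once these three bounds are assembled and substituted into the inequality of Theorem~\ref{T:probabilistic-szpiro}, one obtains exactly the Baby Szpiro statement with the given $\varepsilon_l$ (inherited unchanged from Probabilistic Szpiro).
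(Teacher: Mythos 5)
Your proposal follows essentially the same route as the paper: start from Probabilistic Szpiro, bound $\ln\overline{\Diff}(\Vu/\QQ)$ and $\sum_p\ln\ebar_p$ by elementary invariants of $K$ via $\EE \leq \max$ and a count of primes dividing $\Disc(K/\QQ)$, and then absorb the slack into the $5/4$ exponents using a lower bound on $[K:\QQ]$ and $|\Disc(K/\QQ)|$ coming from $\SL_2(\FF_l)\subset\Gal(K/F)$. The one place you sharpen the paper's bookkeeping is the treatment of $A_{l,\Vu}$: the paper bounds the nonconstant sum by $\ln|\Disc(K/\QQ)|$, whereas you observe it is in fact nonpositive (which is correct for $l\geq 5$, since $\ln b_p + 5/(l+4)\leq \ln b_2 + 5/9 < 0$ and $1-\PP_{\unr,p}^{(l+1)/2}\geq 0$), so the whole of $A_{l,\Vu}$ is already dominated by $\ln\pi$; this tightens the final arithmetic but does not alter the strategy.
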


\iffalse 
\begin{remark}
 The techniques in the proof perhaps indicate that the $5/4$th exponent can be reduced.
 It is unclear to the authors what exponents work in this inequality.
 In \S\ref{S:baby} we pose some questions along these lines. 
\end{remark}
\fi 

In \S\ref{S:explicit} we are more careful with our field invariants and delicately apply the theory of \cite{Serre1968}.
The result of this delicate work is our ``Explicit Szpiro".

\begin{theorem}[Explicit Szpiro]
Assume \cite[Corollary 3.12]{IUT3} and Claim~\ref{L:arch-log-bounds}.
If $E/F$ is an elliptic curve in initial theta data  $(\Fbar/F, E_F, l, \Mu, \Vu, V^{\bad}_{\mod}, \epsu)$ built from the field of moduli then 
\begin{equation}\label{E:explicit-szpiro}
 \vert \Delta^{\min}_{E/F}\vert \leq e^{A_0d_0^2l^4 + B_0d_0}( \vert \Cond(E/F) \vert \cdot \vert \Disc(F/\QQ) \vert)^{24+\varepsilon_l},
\end{equation}
where 
\begin{align*}
A_0 &= 84372107405,\\
B_0 &= 316495,\\
\varepsilon_l &= 96 \left(l + 3\right)/(l^{2} + l - 12).
\end{align*}
\end{theorem}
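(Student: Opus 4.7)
The strategy is to start from the Probabilistic Szpiro inequality (Theorem~\ref{T:probabilistic-szpiro}) and bound each of the $\Vu$-dependent quantities on its right-hand side in terms of $\vert\Cond(E/F)\vert$, $\vert\Disc(F/\QQ)\vert$, $d_0 = [F_0:\QQ]$, and $l$. Exponentiating then yields \eqref{E:explicit-szpiro}. The target exponent $24+\varepsilon_l$ (with $\varepsilon_l = 96(l+3)/(l^2+l-12)$) is exactly four times the $6+\varepsilon_l'$ appearing in Probabilistic Szpiro; this factor of four will arise naturally when relating ramification data averaged over $\Vu \subset V(K)$ to ramification data of $F/\QQ$ together with the conductor of $E/F$, through the tower $K = F(E[l])/F/\QQ$.

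The technical heart is Serre's theory \cite{Serre1968} of the mod-$l$ Galois representation $\rho_l\colon G_F \to \GL_2(\F_l)$. Under the ``built from the field of moduli'' hypothesis one has $K = \QQ(j_E, E[30l], \sqrt{-1})$, and the ramification of $K/\QQ$ decomposes locally: at places $v \nmid l$ of good reduction for $E$, the extension $K/F$ is unramified at $v$; at places $v \nmid l$ of multiplicative reduction, $K/F$ is tamely ramified with index dividing $l$, and the local different exponent is bounded in terms of the local conductor of $E/F$; at places $v \mid l$, the wild inertia is controlled by Serre's bounds on the image of inertia in $\GL_2(\F_l)$, yielding an explicit polynomial-in-$l$ bound on the local different. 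Summing these local estimates and averaging over $\Vu_p$ as in the definitions of $\overline{\Diff}(\Vu/\QQ)$ and $\overline{e}_p$ produces an inequality of the form
\begin{equation*}
\ln \overline{\Diff}(\Vu/\QQ) + \sum_p \ln(\overline{e}_p) + A_{l,\Vu} \leq \frac{4\bigl(\ln\vert\Cond(E/F)\vert + \ln\vert\Disc(F/\QQ)\vert\bigr)}{[F:\QQ]} + C(d_0, l),
\end{equation*}
where $C(d_0,l)$ is an explicit function of the form $O(d_0 l^4) + O(d_0)$ absorbing the wild ramification at primes above $l$, the tame ramification in $F_0(\sqrt{-1}, E_0[30])/F_0$, the archimedean contribution from Claim~\ref{L:arch-log-bounds}, and the $O(1)$ part of $A_{l,\Vu}$. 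Multiplying through by $(6+\varepsilon_l')[F:\QQ]$, using the bound $[F:\QQ] \leq c_0\, d_0$ for an explicit constant $c_0$ (this is where the exponent $d_0^2$ in the leading term of \eqref{E:explicit-szpiro} is generated), and exponentiating then produces \eqref{E:explicit-szpiro}.

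The principal obstacle is the careful tracking of explicit numerical constants. Serre's ramification bounds at $p = l$ are polynomials in $l$ with explicit coefficients, and these must be propagated through the Probabilistic Szpiro averaging, the degree factor $[F:F_0]$ (bounded by a product of orders of $\GL_2(\F_p)$ for $p \in \{2,3,5\}$), and the final normalization by $[F:\QQ]$ without introducing slack that would compromise the target constants $A_0 = 84372107405$ and $B_0 = 316495$. The explicit formulas for the indeterminacies $\indone, \indtwo, \indthree$ developed in \cite{Dupuy2020a} are essential here: they replace the largely qualitative bounds of \cite{IUT4} with quantitative ones suitable for this book-keeping, which is the only reason one can state \eqref{E:explicit-szpiro} with concrete numerical constants rather than with an unspecified $O(\cdot)$.
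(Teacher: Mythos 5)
Your proposal is correct in outline but takes a genuinely different route from the paper. The paper does \emph{not} derive Explicit Szpiro by bounding the right-hand side of Probabilistic Szpiro (Theorem~\ref{T:probabilistic-szpiro}); it goes back to the estimated Tautological Probabilistic Inequality \eqref{E:estimated} and redoes the hull computation from scratch, splitting the sum over rational primes into three regimes --- infinite, large ($p>B_{l,d_0}$, where $B_{l,d_0}=276480\,l^4 d_0$ bounds $e(w/p)$ for all $w\in V(K)$), and small ($p\le B_{l,d_0}$) --- and invoking the \emph{refined} log-shell bound of Lemma~\ref{L:log-bounds2}(2) (tame ramification, $\log(\OO_K^\times)=\pi\OO_K$) at the large places, rather than the crude bound that Probabilistic Szpiro uses uniformly. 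The normalization factor that produces the $24+\varepsilon_l$ rather than $6+\varepsilon_l'$ then arises because the paper divides by $(l+5)$ instead of $(l+5)/4$ after summing its $\Irm_p+\IIrm_p+\IIIrm_p$ at each large place. Your route through Probabilistic Szpiro is legitimate: the crucial estimate $\sum_{p\mid\Disc(K/\QQ),\,p>B}\ln p \le 2\bigl(\ln|\Disc(F/\QQ)|+\ln|\Cond(E/F)|\bigr)/[F:\QQ]$ (the paper's Néron--Ogg--Shafarevich lemma) supplies one factor of $2$, and the fact that both $\ln\overline{\Diff}(\Vu/\QQ)$ and $\sum_p\ln\overline{e}_p$ pick up $\le\sum_{p>B,\mathrm{ram}}\ln p$ (since $\diffbar_p\le1$ and $\overline{e}_p<p$ in the tame range) supplies the other, giving the factor of $4$ you anticipated; and since $A_{l,\Vu}\le\ln\pi$ (each $\ln(b_p)+5/(l+4)<0$), the archimedean remainder is harmless. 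However, your sketch is silent on exactly where that factor of $4$ decomposes as $2\times2$ --- your phrase ``relating ramification data averaged over $\Vu$ to ramification data of $F/\QQ$'' gestures only at one of the two; a complete version would also need to argue that the small-$p$ contribution to $\ln\overline{\Diff}+\sum_p\ln\overline{e}_p$ (which requires the wild different bound $\diff(\vu/p)\le 1+\ord_p[K:\QQ]$ together with Dusart's $\pi(B)$ estimate) gives the $O(d_0^2 l^4)$ term, and the resulting numerical constants $A_0,B_0$ would not automatically match the paper's, since the two routes carry different slack through the small places. What your route \emph{buys} is that the ramification analysis needs to be done only once, inside Probabilistic Szpiro; what the paper's direct route buys is tighter control over the constants by deferring the $(l+5)/4$ normalization until after the small/large/infinite splitting.
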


All of these computations follow the same pattern. 
First, one computes an upper bound for the so-called hull of the multiradial representation (c.f. \cite[\S4]{Dupuy2020a})
then one tries to be as clever as possible in order to obtain futher bounds on the radiuses of these bounding polydiscs.
The initial bounding is performed in \S\ref{S:hulls} and the secondary bounds come in subsequent sections. 
All of the material before $\S 7$ is built for this imminent application. 
A lot of interesting Mathematics comes into play at both stages of these computations and interestingly, as our treatment will show, it seems that there is a lot 
of room for improvement. 
We have put in a great deal of effort in an attempt to highlight some of these avenues for improvement which we hope readers will take an interest in (see for example Remark~\ref{R:ways-to-improve}).

%%%%%%%%%%%%%%%%%%%%%%%%%%%%%%%%%%%%%%%%%%%%%%%%%%%%%%%%
\subsection*{Acknowledgements}
%%%%%%%%%%%%%%%%%%%%%%%%%%%%%%%%%%%%%%%%%%%%%%%%%%%%%%%%
This article is very much indebted to many previous expositions of IUT including (but not limited to) \cite{Fesenko2015,Hoshi2018, Kedlaya2015,Hoshi2015,Stix2015,Mok2015,Mochizuki2017,Yamashita2017, Hoshi2017,Tan2018,SS}. 
The first author also greatly benefitted from conversations with many other mathematicians and would especially like to thank Yuichiro Hoshi for helpful discussions regarding Kummer theory and his patience during discussions of the theta link and Mochizuki's comparison; Kirti Joshi for discussions on deformation theory in the context of IUT; Kiran Kedlaya for productive discussions on Frobenioids, tempered fundamental groups, and global aspects of IUT; Emmanuel Lepage for helpful discussions on the p-adic logarithm, initial theta data, aut holomorphic spaces, the log-kummer correspondence, theta functions and their functional equations, tempered fundamental groups, log-structures, cyclotomic synchronization, reconstruction of fundamental groups, reconstruction of decomposition groups, the "multiradial representation of the theta pilot object", the third indeterminacy, the second indeterminacy, discussions on Hodge Theaters, labels, and kappa coric functions, and discussions on local class field theory; Shinichi Mochizuki for his patience in clarifying many aspects of his theory --- these include discussions regarding the relationship between IUT and Hodge Arakelov theory especially the role of "global multiplicative subspaces" in IUT, discussions on technical hypotheses in initial theta data; discussions on Theorem 3.11 and "(abc)-modules", discussions on mono-theta environments and the interior and exterior cyclotomes, discussions of the behavior of various objects with respect to automorphisms and providing comments on treatment of log-links and the use of polyisomorphisms, discussions on indeterminacies and the multiradial representation, discussions of the theta link, discussions on various incarnations of Arakelov Divisors, discussions on cyclotomic synchronization; Chung Pang Mok for productive discussions on the p-adic logarithm, anabelian evaluation, indeterminacies, the theta link, and hodge theaters; Thomas Scanlon for discussions regarding interpretations and infinitary logic as applied to IUT and anabelian geometry.
We apologize if we have forgotten anybody.

The authors also benefitted from the existence of the following workshops: the 2015 Oxford workshop funded by the Clay Mathematics Institute and the EPSRC programme grant \emph{Symmetries and Correspondences}; the 2017 Kyoto \emph{IUT Summit} workshop funded by RIMS and EPSRC; the Vermont workshop in 2017 funded by the NSF DMS-1519977 and \emph{Symmetries and Correspondences} entitled \emph{Kummer Classes and Anabelian Geometry}; the 2018 Vermont Workshop on \emph{Witt Vectors, Deformations and Absolute Geometry} funded by NSF DMS-1801012.

The first author was partially supported by the European Research Council under the European Unions Seventh Framework
Programme (FP7/2007-2013) / ERC Grant agreement no. 291111/ MODAG while working on this project. 

The research discussed in the present paper profited enormously from the generous support of the International Joint Usage/Research Center (iJU/RC) located at Kyoto Universities Research Institute for Mathematical Sciences (RIMS) as well as the Preparatory Center for Research in Next-Generation Geometry located at RIMS.
	  
%%%%%%%%%%%%%%%%%%%%%%%%%%%%%%%%%%%%%%%%%%%%%%%%%%
\section{Explicit Computations in Tensor Packets}\label{S:different}
%%%%%%%%%%%%%%%%%%%%%%%%%%%%%%%%%%%%%%%%%%%%%%%%%%%
The entire purpose of this subsection is Theorem~\ref{T:descent} and the entire purpose of Theorem~\ref{T:descent} is the for hull computation in \S\ref{S:hulls}.
At the end of the day the differents appearing in these sections are what give rise to the conductor term in Szpiro inequalities (in conjunction with the material in \S\ref{S:geometry-and-galois}).

Fix $K_1,\ldots,K_m$ finite extensions of $\QQ_p$. 
Let $L=K_1 \otimes \cdots \otimes K_m$.
We are interested in the difference between the $\Z_p$-lattices\footnote{A \emph{lattice} of a $\QQ_p$-vectors space $V$ is a $\ZZ_p$-submodule $V_0 \subset V$ which is free of rank $\dim(V)$ whose $\QQ_p$-span is all of $V$. } $\OO_{K_1} \otimes \cdots \otimes \OO_{K_m} \subset \OO_L$.  
It turns out that the index of $\OO_{K_1}\otimes \cdots \otimes \OO_{K_m}$ in $\OO_L$ is related to the Differents of $K_i/\Q_p$ which we describe in the subsequent subsections.
Again, this is needed for the hull computations.\footnote{And in explicit cases one can actually proceed differently, say, using conductors in the sense of Commutative Algebra.}

%---------------------------------------------
\subsection{Integral Closures} 
%---------------------------------------------
For a reduced ring $T$, the \emph{total ring of fractions} is defined by $\kappa(T) = S_{\max}^{-1}T$ where $S_{\max}$ is the multiplicatively closed set of non-zero divisors. 
We let $\Int_{\kappa(T)}(T)$ denote the integral closure of $T$ in $\kappa(T)$.

Let $K_1,\ldots,K_m$ be finite extensions of $\QQ_p$ and consider the special case $T=\OO_{K_1} \otimes \cdots\otimes \OO_{K_m}$ (where tensor products are taken over $\ZZ_p$). 
It turns out that $\kappa(T)=K_1\otimes \cdots \otimes K_m$ (where tensor products are over $\QQ_p$).
Let $L = \kappa(T)$. 
We can use the Chinese Remainder Theorem to write $L \cong \bigoplus_{j=1}^r L_j$ where each $L_j$ is a finite extension of $\QQ_p$.  
Define $\OO_L = \bigoplus_{j=1}^r \OO_{L_j}$.
It also turns out that $\Int_L(T) = \OO_L$.

%---------------------------------------------------
\subsection{Differents and Discriminants}\label{S:diff-and-disc}
%---------------------------------------------------
For a discussion on Differents and Discriminants of fields we refer the reader to \cite[III.2]{Neukirch1999} or \cite{Sutherland2015} or \cite{Conrad}. 
A very comprehensive review of Differents in great generality can be found in \cite[0DW4]{stacksproject} and the references therein. 
For $A \supset B$ a finite extension of rings we define the \emph{different ideal} to be 
$$ \Diff(A/B) := \ann_{B}(\Omega_{B/A}). $$
Here $\Omega_{B/A}$ is the module of Kahler differentials.
When $L/L_0$ is an extension of fields we use the notation $\Diff(L/L_0) := \Diff(\OO_L/\OO_{L_0})$. 
For an extension of number fields $L/L_0$ the different ideal and be computed ``as a product'' of local differents (see \cite{Neukirch1999}, for details).
The different also behaves well in towers.
If $K$ is a finite extension of $\QQ_p$ with residue field $k$ then $\OO_K$ can be written as  
$$ \OO_K = W(k)[x]/(f(x)),$$
where $f(x)$ is an Eisenstein polynomial of degree $e$ (also $e$ is the ramification degree of $K/\QQ_p$) and $W(k)$ is the full ring of $p$-typical Witt vectors of $k$.
As $\Diff(W(k)/\ZZ_p) = 1$ (this extension is unramified) to compute $\Diff(\OO_K/\ZZ_p)$ it remains to compute $\Diff(\OO_K/W(k))$. 
From the formula $\Omega_{\OO_K/W(k)} = (\OO_K\cdot dx)/(\OO_K\cdot df)$ and $d(f(x)) = f'(x) dx$ we find that 
\begin{equation}\label{E:diff-formula}
 \Diff(\OO_K/\ZZ_p)=(f'(\pi)).
\end{equation}
We can do some more computations to get some useful information. We find $f'(\pi) = e \pi^{e-1} + \cdots$ where all of terms have distinct valuation and the leading term of $f'(\pi)$ has minimal valuation (this is due to the Eisenstein-ness hypothesis).\footnote{See \cite{Neukirch1999}.} 
This gives the formula $\Diff(K/\QQ_p) = (e\pi^{e-1})$ from which we compute
$$\ord_p(\Diff(K/\QQ_p))= \ord_p(e) + (e-1)\ord_p(\pi)= \ord_p(e) + (e-1)\frac{1}{e}.$$

The \emph{discriminant} of an extension of fields $L/L_0$ is then defined to be the ideal-norm of the different: 
$$\Disc(L/L_0) = N_{L/L_0}(\Diff(L/L_0)) \vartriangleleft \OO_{L_0} $$ 	
We remark that $\prod_p p^{\ord_p \Diff(L/\QQ)} = \vert \Disc(L/\QQ) \vert^{1/[L:\QQ]}$.
This is helpful when thinking about (say) \eqref{E:prob-szpiro}.
In later sections we will make use of the notation $\diff(K/\QQ_p) = \ord_p \Diff(K/\QQ_p)$.

%----------------------------------------------------
\subsection{Explicit Chinese Remainder Formulas}\label{S:chinese}
%----------------------------------------------------
Fix a field $K_0$ and an algebraic closure $\Kbar_0$. 
Let $K_1,\ldots,K_m$ be finite extensions of $K_0$ sitting inside the common algebraic closure.
The isomorphism of rings 
\begin{equation}\label{E:isomorphism}
 K_1\otimes \cdots \otimes K_m \xrightarrow{\varphi} \bigoplus_{\psi \in \Phi} L_{\psi}
\end{equation}
will play an important role for us. 
We describe its ingredients:

\begin{itemize}
\item $\Phi \subset \bigoplus_{i=1}^m \Hom(K_i,\Kbar_0)$, is a complete system of representatives under the equivalence relation
$$(\psi_1,\ldots,\psi_m) \sim (\sigma\psi_1,\ldots,\sigma\psi_m)$$
for $\sigma \in G(\Kbar_0/K_0)$. 
\item For $\psi = (\psi_1,\ldots,\psi_m) \in \Phi$ we let $L_{\psi}$ be the compositum  
$$ L_{\psi} = \psi_1(K_1)\cdots\psi_m(K_m) \subset \overline{K_0}.$$
\item The isomorphism  $\varphi$ is defined via extending linearly the map
$$ \varphi(a_1\otimes \cdots \otimes a_m) = (\varphi_{\psi}(a_1\otimes\cdots\otimes a_m))_{\psi\in \Phi}, $$
where $\varphi_{\psi}(a_1\otimes \cdots \otimes a_m) = \psi_1(a_1)\cdots\psi_m(a_m)$.
\end{itemize}
We prove \eqref{E:isomorphism} is an isomorphism:
To see this we first note that  $\Spec(K_1\otimes \cdots \otimes K_m) = \prod_{i=1}^m \Spec(K_i)$ so the scheme is zero dimensional (and the spectrum of a product of fields).
	Each maximal ideal in the tensor product is the kernel of some map $\varphi:K_1\otimes\cdots\otimes K_m \to \Kbar_0$. 
	Two such maps have the same kernel if and only if they differ by an automorphism of $\Kbar_0$. 
	This explains the bijection between maximal ideals of the tensor product and $(\bigoplus \Hom(K_i,\overline{K_0}))/\sim$.
	Also, using the composition 
	$$ K_i \to K_1 \otimes \cdots \otimes K_m \to \Kbar_0 $$
	we see that any $\varphi$ induces $\psi_i:K_i \to \Kbar_0$ and we witness $\varphi$ as having the special form $\varphi(a_1\otimes\cdots\otimes a_m) = \psi_1(a_1) \cdots \psi_m(a_m)$.

%-------------------------------------------------------------
\subsection{Field Embeddings vs Choices of Roots}
%-------------------------------------------------------------
Let $K/K_0$ be a finite field extension. 
Write this as a primitive extension with $K = K_0(\alpha)$ and let $f(x)$ be the minimal polynomial of $\alpha$. 
Using this notation we can write down a bijection
 $$ \Hom_{K_0}(K,\Kbar_0) \xrightarrow{\sim} \lbrace \beta \in \Kbar_0: f(\beta)=0 \rbrace $$
 $$ \psi_0\mapsto \psi_0(\alpha).$$
Now, let $\Phi \subset \bigoplus_{i=1}^m \Hom_{K_0}(K,\Kbar_0)$ be a complete system of representatives for the equivalence relation $\sim$. 
Let $K_i = K_0(\alpha_i)$ where $\alpha_i$ has minimal polynomial $f_i(x)$. 
We can modify any $\psi = (\psi_1,\ldots,\psi_m)$ by some $\sigma \in G(\Kbar_0/\Kbar)$ with $\sigma \psi_1 = \id_{K_1}$ so that 
 $$ (\psi_1,\psi_2,\ldots,\psi_m) \sim (\id_{K_1},\psi_2',\ldots,\psi_m').$$
Such choices of $\psi$ will be called \emph{normalized} (for $K_1$) and 
a collection of embeddings $\Phi$ will be called \emph{normalized} if each element is normalized. 

Note that normalized $\Phi$ are in bijection with tuples of roots of the corresponding minimal polynomials. 
 $$\Phi \xrightarrow{\sim} \lbrace \vec{\alpha}'=(\alpha_2',\ldots,\alpha_m'): f_2(\alpha_2')=0,\ldots,f_m(\alpha_m')=0 \rbrace $$
 $$ (\id_{K_1},\psi_2,\ldots,\psi_m)=(\psi_1,,\psi_2,\ldots,\psi_m) \mapsto (\psi_2(\alpha_2),\ldots,\psi_m(\alpha_m)).$$
We record that the inverse map is given by 
 $$ \vec{\alpha}' \mapsto \psi_{\vec{\alpha}'} $$
where the components of $\psi_{\vec{\alpha'}}$ are the field embeddings uniquely determined by where they send the specified primitive element.
We will make use of this correspondence frequently.

\subsection{Notation for Quotients}
For a polynomial ring $R[x_1,\ldots,x_n]/I$ we will sometimes use the notation $\bar{x}_1,\ldots,\bar{x}_n$ to denote the images of $x_1,\ldots,x_n$ in the quotient.

%-----------------------------------------------------------
\subsection{Decomposition Comparisons}
%-----------------------------------------------------------
Given fields $K_i = K_0(\alpha_i)$ with minimal polynomials $f_i(x)$ for $i=1,\ldots,m$ and $\Phi$ a $K_1$-normalized system of embeddings we are interested in a description of the isomorphism \eqref{E:isomorphism} under the image of the base-change functor $\Kbar_0\otimes_{K_1} - $. 
This description will be used later in relating the tensor product of rings of integers to the ring of integers of tensor products.

First, we observe that $K_1\otimes \cdots\otimes K_m \cong K_1[x_2,\ldots,x_m]/(f_2,\ldots,f_m)$. 
This gives 
\begin{align*}
 \Kbar_0\otimes_{K_1} ( K_1\otimes \cdots \otimes K_m) &\cong \Kbar_0\otimes_{K_1} K_1[x_2,\ldots,x_m]/(f_2,\ldots,f_m) \\
 &\cong \Kbar_0[x_2,\ldots,x_m]/(f_2,\ldots,f_m) \\
 &\cong \bigoplus_{\vec{\alpha}'} \Kbar_0[x_2,\ldots,x_m]/(x_2-\alpha_2',\ldots,x_m-\alpha_m').
\end{align*}
Hence the isomorphism 
$$\Kbar_0 \otimes_{K_1}\varphi:\bigoplus_{\vec{\alpha}'} \Kbar_0[x_2,\ldots,x_m]/(x_2-\alpha_2',\ldots,x_m-\alpha_m') \to \Kbar_0 \otimes_{K_1} \bigoplus_{\psi \in \Phi} L_{\psi}. $$
is now seen to be given by 
 $$ (f(\bar{x}_2,\ldots,\bar{x}_m))_{\vec{\alpha}'} \mapsto (f(\alpha_2',\ldots,\alpha_m'))_{\psi_{\vec{\alpha}'}}.$$
The point here is that base changing to the algebraic closure splits fields and this allows us to work with roots of polynomials.

\iffalse 
The fact that this map is an isomorphism can be seen inductively. 
We have that $K_1\otimes \cdots \otimes K_m$ is a direct sum of fields. 
In the case $m=1$ the isomorphism is clear.
For the inductive step we can break down $K_1\otimes \cdots \otimes K_{m-1}\otimes K_m = ( \bigoplus_i F_i) \otimes K_m = \bigoplus_i( F_i \otimes K_m) = \bigoplus_i \bigoplus_j F_{j,i}$.
\fi

%-----------------------------------------------------------
\subsection{Idempotents and Differents}
%-----------------------------------------------------------
Let $K_1,\ldots,K_m$ be finite extensions of a field $K_0$ with $K_i = K_0(\alpha_i)$ and minimal polynomials $f_i$. 
If $K_1$ contains the Galois closures of $K_2,\ldots,K_m$ then the idempotents of $K_1\otimes \cdots \otimes K_m$ have the form 
\begin{equation}\label{E:idempotents}
 g_{j_2,\ldots,j_m} = \prod_{i=2}^m \frac{f_i(\bar{x}_i)}{(\bar{x}_i-\alpha_{i,j_i})}\frac{1}{f_i'(\alpha_{i,j_i})}.
\end{equation}
Here $K_1\otimes \cdots \otimes K_m = K_1[x_2,\ldots,x_m]/(f_2,\ldots,f_m)$ and 
 $$f_i(x) = (x-\alpha_{i,1})(x-\alpha_{i,2}) \cdots (x-\alpha_{i,n_i}). $$
Alternatively, we can fix some $\psi := (\psi_1,\ldots,\psi_m)$ a tuple of embeddings $\psi_i: K_i\to \Kbar_0$ and write
 $$ g_{\psi} =\prod_{i=2}^m \frac{f_i(\bar{x}_i)}{(\bar{x}_i-\psi_i(\alpha_i))}\frac{1}{f_i'(\psi(\alpha_i))}. $$

\begin{proof}
	We know that 
 $$K_1 [x_2,\ldots,x_m]/(f_2,\ldots,f_m) 
	= \bigoplus_{\vec{\alpha}' = (\alpha_2',\ldots,\alpha_m')} K_1[x_2,\ldots,x_m]/(x_2-\alpha_2',\ldots,x_m-\alpha_m'),$$
	so find the idempotents in this decomposition is the same as solving for $g_{\vec{\alpha}'}$ such that 
	$$ \begin{cases}
	 g_{\vec{\alpha}'} \equiv 1 \ \  (x_2 - \alpha_2',\ldots,x_m-\alpha_m'), &  \\
	 g_{\vec{\alpha}'} \equiv 0 \ \ (x_2-\beta_2',\ldots,x_m -\beta_m'), & \vec{\beta}'\neq \vec{\alpha}'
	 \end{cases}. 
	 $$
	 Since $f_i(x)/(x-\alpha_i') \to f_i'(\alpha_i)$ as $x\to\alpha_i$ by L'h\^{o}pital's rule (which by universality of the computation holds algebraically),
	 the element
	  $$\widetilde{g}_i(x) = \frac{f_i(x)}{(x-\alpha_i)}\frac{1}{f_i'(\alpha_i)}$$
	 has $\widetilde{g}_i(\alpha_i')=1$ and $\widetilde{g}_i(\beta_i') = 0$ for $\beta_i'\neq \alpha_i'$.
	 To obtain our result we just take the product of the $\widetilde{g}_i$ as in the statement of the result. 
\end{proof}

The relation between idempotents and differents now appears clear via formulas \eqref{E:diff-formula} in \S\ref{S:diff-and-disc} and \eqref{E:idempotents}.
%-----------------------------------------------------------
\subsection{ Rings of Integers of Tensor Products vs Tensor Products of Rings of Integers {\cite[Theorem 1.1]{IUT4}} }
%-----------------------------------------------------------
We now give the comparison of $T = \bigotimes_{i=1}^m \OO_{K_i}$ and $\OO_L$.
Here $\OO_L = \bigoplus_{\psi \in \Phi} \OO_{L_{\psi}}$ where $L = K_1\otimes \cdots\otimes K_m = \bigoplus_{\psi \in \Phi} L_{\psi}$.
We remind ourselves that $\OO_L$ is a $T$-algebra.
Here $\varphi:T \to \OO_L$ is given by (extending linearly)
 $$ \varphi(a_1\otimes \cdots \otimes a_m) = (\psi_1(a_1)\cdots \psi_m(a_m))_{\psi\in \Phi}.$$
For future reference we will let $\varphi_{\psi}$ denote the component of $\varphi$ in the $\psi$th factor. Explicitly, $\varphi_{\psi}(a_1\otimes \cdots \otimes a_m) = \psi_1(a_1)\cdots\psi_m(a_m)$ if $\psi=(\psi_1,\ldots,\psi_m)$.   

\begin{theorem}\label{T:descent}
	Let $K_1,\ldots,K_m$ be finite extensions of $\QQ_p$ sitting in a fixed algebraic closure.
	Let $T = \OO_{K_1}\otimes \cdots \otimes \OO_{K_m}$. 
	Let $L = \kappa(T) = K_1\otimes \cdots \otimes K_m$. 
	Let $k_i$ for $i=1,\ldots,m$ denote the respective residue fields of $K_i$. 
	If $\beta = 1 \otimes f_2'(\alpha_2) \otimes \cdots \otimes f_m'(\alpha_m)$ where $\OO_{K_i} = W(k_i)[\alpha_i]$ with Eisenstein polynomial $f_i(x) \in W(k_i)[x]$ then 
	 $$ \beta \in (T:_L \OO_L). $$
	That is $\beta \cdot \OO_L\subset T$.
\end{theorem}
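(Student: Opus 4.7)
The plan is to exploit the explicit idempotent decomposition of $L$ given in~\eqref{E:idempotents}, and to verify the inclusion one idempotent at a time. The argument naturally has three parts: a Galois reduction that lets us apply~\eqref{E:idempotents}, an explicit computation showing $\beta\cdot g_\psi\in T$, and a Lagrange-style lifting to handle arbitrary elements of $\OO_L$.

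First I would reduce to the case that $K_1$ contains the Galois closures of $K_2,\ldots,K_m$ over $\QQ_p$. If $K_1'\supset K_1$ is any finite extension containing these Galois closures, set $T':=\OO_{K_1'}\otimes_{\ZZ_p}\OO_{K_2}\otimes\cdots\otimes\OO_{K_m}\cong T\otimes_{\OO_{K_1}}\OO_{K_1'}$ and $L':=K_1'\otimes_{\QQ_p}K_2\otimes\cdots\otimes K_m\cong L\otimes_{K_1}K_1'$. Since $\OO_{K_1'}$ is free over $\OO_{K_1}$, the natural maps $T\hookrightarrow T'$ and $\OO_L\hookrightarrow \OO_{L'}$ are injective with $T'\cap L=T$ inside $L'$, and the element $\beta$ is the same in both settings. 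Hence if $\beta\cdot\OO_{L'}\subset T'$ then $\beta\cdot\OO_L\subset T'\cap L=T$, reducing us to the Galois case.

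Under the Galois hypothesis,~\eqref{E:isomorphism} gives $L\cong\bigoplus_{\psi\in\Phi}L_\psi$ with every $L_\psi=K_1$, and the idempotent $g_\psi$ from~\eqref{E:idempotents} projects onto the $\psi$-th factor. The key cancellation is
$$\beta\cdot g_\psi\;=\;\prod_{i=2}^m\frac{f_i(\bar{x}_i)}{\bar{x}_i-\psi_i(\alpha_i)},$$
because $\beta=1\otimes f_2'(\alpha_2)\otimes\cdots\otimes f_m'(\alpha_m)$ maps in $L$ to the element whose $\psi$-coordinate is $\prod_i f_i'(\psi_i(\alpha_i))$, exactly clearing the denominator of $g_\psi$. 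Each factor $q_{i,\psi}(x):=f_i(x)/(x-\psi_i(\alpha_i))$ is, by polynomial division, a monic polynomial of degree $e_i-1$ whose other coefficients are $W(k_i)$-polynomials in $\psi_i(\alpha_i)$; since $\psi_i(\alpha_i)\in\OO_{K_1}$ and $W(k_i)\subset\OO_{K_1}$ (as $K_1$ contains the maximal unramified subfield of the Galois closure of $K_i$), we get $q_{i,\psi}(x)\in\OO_{K_1}[x]$. Consequently $q_{i,\psi}(\bar{x}_i)$ lies in the image of $T\hookrightarrow L$, and so does the product $\beta g_\psi$.

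To finish, decompose an arbitrary $\gamma\in\OO_L$ in product coordinates as $(\gamma_\psi)_\psi$ with $\gamma_\psi\in\OO_{L_\psi}=\OO_{K_1}$, and form the diagonal lift $\widetilde{\gamma}_\psi:=\gamma_\psi\otimes 1\otimes\cdots\otimes 1\in T$. Its image in $L$ is the constant tuple $(\gamma_\psi,\ldots,\gamma_\psi)$, so $\widetilde{\gamma}_\psi\cdot g_\psi$ equals $\gamma_\psi$ in the $\psi$-coordinate and vanishes elsewhere; summing, $\gamma=\sum_\psi\widetilde{\gamma}_\psi g_\psi$ in $L$, and hence $\beta\gamma=\sum_\psi\widetilde{\gamma}_\psi(\beta g_\psi)\in T$. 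The main obstacle is the bookkeeping across residue-field extensions --- verifying that each $W(k_i)$ embeds into $\OO_{K_1}$ compatibly with the chosen $\psi_i$, and that the diagonal lift $\widetilde{\gamma}_\psi$ genuinely lies in $T$ rather than in some larger tensor product over $\OO_{K_1}$. Once this is in place, the computation is essentially a multivariable avatar of the classical Euler/Lagrange identity underlying $\Diff(\OO_K/\OO_{K_0})=(f'(\alpha))$.
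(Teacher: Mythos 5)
Your proof follows essentially the same skeleton as the paper's: descend to a base ring over which the tensor algebra splits completely, use the explicit idempotents~\eqref{E:idempotents}, observe the cancellation $\varphi_\psi(\beta)g_\psi\in T$, and expand a general element of $\OO_L$ in the idempotent basis. The one genuine variation is the choice of descent ring. The paper base-changes all the way to $\Zbar_p$ and invokes faithful flatness of $\Zbar_p$ over $\OO_{K_1}$, reducing the inclusion $\beta\OO_L\subset T$ to the inclusion $\betabar\,\OObar_L\subset\Tbar$ over $\Zbar_p$; you instead extend only to a finite $K_1'$ containing all Galois closures and argue directly that $T'\cap L=T$ by the freeness of $\OO_{K_1'}$ over $\OO_{K_1}$. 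Both are correct, and both work for the same reason (flatness). Your version has the minor advantage of staying within Noetherian rings, and your explicit observation that $q_{i,\psi}(x)=f_i(x)/(x-\psi_i(\alpha_i))$ is a monic polynomial with coefficients in $\OO_{K_1}$ makes the membership $\beta g_\psi\in T$ a bit more transparent than the paper's bare assertion that $g_\psi\in\varphi_\psi(\beta)^{-1}\Tbar$.

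The issue you flagged at the end is real, and you should be aware that it is present in the paper's proof as well. The idempotent formula~\eqref{E:idempotents} is proved for $f_i$ the \emph{minimal polynomial of $\alpha_i$ over $K_0=\QQ_p$}, writing $K_1\otimes\cdots\otimes K_m\cong K_1[x_2,\dots,x_m]/(f_2,\dots,f_m)$. But Theorem~\ref{T:descent} uses $f_i$ the \emph{Eisenstein polynomial of $\alpha_i$ over $W(k_i)$}; when $K_i/\QQ_p$ has nontrivial residue extension these are not the same polynomial, and $K_1\otimes_{\QQ_p}K_i$ is not simply $K_1[x_i]/(f_i)$ but rather $\prod_\tau K_1[x_i]/(\tau f_i)$, the product running over embeddings $\tau:W(k_i)\hookrightarrow\OO_{K_1}$. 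The clean fix --- which your reduction already sets up --- is to split the unramified part integrally first: once $K_1$ contains the Galois closures, $\OO_{K_1}\otimes_{\ZZ_p}W(k_i)\cong\prod_\tau\OO_{K_1}$ because $W(k_i)/\ZZ_p$ is \'etale, whence $T\cong\prod_{\vec\tau}\OO_{K_1}[x_2,\dots,x_m]/(\tau_2 f_2,\dots,\tau_m f_m)$ with each $\tau_i f_i$ Eisenstein over $\OO_{K_1}$. Then apply~\eqref{E:idempotents} with base field $K_0:=K_1$ to each factor; there the ``minimal polynomial'' and ``Eisenstein polynomial'' coincide, your cancellation and Lagrange-expansion go through verbatim factor-by-factor, and the bookkeeping you worried about disappears. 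With that one insertion your argument is complete.
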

\begin{proof}
	in what follows we let $\Zbar_p$ denote the integral closure of $\ZZ_p$ in $\QQbar_p$.
	In view of faithful flatness \cite[Chapter 3, exercises 16,17]{Atiyah1969} it is enough to show
	 $$ \Zbar_p \otimes_{\OO_{K_1}} (\beta \cdot \OO_L) \subset \Zbar_p \otimes_{\OO_{K_1}} T.$$
	We will use the notation $\OObar_L := \Zbar_p\otimes_{\OO_{K_1}} \OO_L$ and $\Tbar := \Zbar_p\otimes_{\OO_{K_1}} T$. 
	Using our embedding decomposition we have $\Zbar_p \otimes_{\OO_{K_1}}(\beta \OO_L) = \betabar\cdot \OObar_L$ where $\betabar = \sum_{\psi \in \Phi} \varphi_{\psi}(\beta) g_{\psi}.$ 
	Here we note that 
	$$\varphi_{\psi}(\beta)= \varphi_{\psi}(1 \otimes f_2'(\alpha_2) \otimes \cdots \otimes f_m'(\alpha_m)) = f_2'(\psi_2(\alpha_2))\cdots f_m'(\psi_m(\alpha_m)),$$
	for $\psi = (\psi_1,\ldots,\psi_m) \in \Phi$ (here we take $\Phi$ to be $K_1$-normalized).
	
	We now use that the idempotents are given by 
	 $$ g_{\psi} = \prod_{i=1}^m \frac{f_i(\bar{x}_i)}{(\bar{x}_i-\psi_i(\alpha_i))} \frac{1}{f_i'(\psi_i(\alpha_i))} \in \frac{1}{\varphi_{\psi}(\beta)}\Zbar_p[\bar{x}_2,\ldots,\bar{x}_m] = \frac{1}{\varphi_{\psi}(\beta)}\Tbar.$$
	
	Now we just check: if $x\in \overline{R}$ it has the form $x = \sum_{\psi\in \Phi} x_{\psi} g_{\psi}$ for some $x_{\psi} \in \Zbar_p$. 
	We have 
	\begin{align*}
	\betabar\cdot x &= \left( \sum_{\psi\in \Phi} \varphi_{\psi}(\beta)g_{\psi} \right) \left( \sum_{\xi \in \Phi} x_{\xi} g_{\xi} \right)\\
	&= \sum_{\psi} \varphi_{\psi}(\beta)x_{\psi}g_{\psi} \in \Tbar.
	\end{align*}
	The second equality follows from orthogonality of idempotents and the last membership statement follows from the fact that $\varphi_{\psi}(\beta)g_{\psi} \in \Tbar$.  
\end{proof}

\begin{remark}
	The proof of Theorem~\ref{T:descent} has nothing to do with $K_1$. 
	We can chose some $K_i$ which makes the inclusion tightest. 
\end{remark}

%%%%%%%%%%%%%%%%%%%%%%%%%%%%%%%%%%%%%%%%%%%%%%%%%%%%%%%%%%%%%%%%%%
\section{Conductors, Minimal Discriminants, and Ramification}\label{S:geometry-and-galois}
%%%%%%%%%%%%%%%%%%%%%%%%%%%%%%%%%%%%%%%%%%%%%%%%%%%%%%%%%%%%%%%%%%%

This section contains definitions and facts about bad reduction, minimal discriminants, and Galois theory necessary for our applications. 
Readers just interested in Probabilistic Szpiro or Baby Szpiro (\S\ref{S:prob-szpiro-sec}) may skip the last two subsections and proceed directly to \S\ref{S:hulls}.
For a quick reading, readers may which to skip to \S\ref{S:hulls} and come back to this section as needed in the course of reading \S\ref{S:prob-szpiro-sec} or \S\ref{S:explicit}. 

\subsection{Inertia/Decomposition Sequences}
Recall that for a finite extension $K$ of $\Q_p$ we have an extension of topological groups 
$$ 1 \to I_K \to G_K \to G_k \to 1,$$
where $I_K$ is the inertia group and $k$ is the residue field. 

If $L$ is a global field and $v \in V(L)$ is non-archimedean and $\vbar \vert v$ is a place of $\Lbar$ we have 
$$ 1 \to I(\vbar/v) \to D(\vbar/v) \to G(\kappa(\vbar)/\kappa(v)) \to 1 $$
where $G(\vbar/v) = \Stab_{G_L}(\vbar) \cong G_{L_v}$ is the decomposition group.

\subsection{Unramified and Ramified Representations} 
Let $L$ be a finite extension of $\QQ_p$. If $X$ is an object in a category, a representation $\rho: G_L \to \Aut(X)$ is \emph{unramified} if and only if $\rho(I_L)=1$.
We may speak of $X$ being unramified, where the representation is understood (usually torsion points of an elliptic curve).

Let $L$ be a global field.
Given $\rho: G_L \to \Aut(X)$ we say that $\rho$ is \emph{unramified} at $v$ if and only if $\rho\vert_{G_v}$ is unramified.
In either of these cases, if a representation is not unramified it is called \emph{ramified}.

\subsection{Good and Bad Reduction} Let $K$ be a finite extension of $\QQ_p$. 
Let $R = \OO_K$ be its ring of integers and let $k$ be its residue field.
Let $A_K$ be an abelian variety over $K$. 
We recall that $A_K$ has \emph{good reduction} if and only if there exists and abelian schemes $A$ over $R$ whose generic fiber is isomorphic to $A_K$.
This is equivalent to the special fiber of the N\'eron model being an abelian variety.
Given an abelian variety $A$ over a global field $L$ we say that $A_L$ has \emph{good reduction} at $v$ if and only if $A_{L_v}$ does.

\subsection{Division Fields and Galois Representations} 
Let $A$ be an abelian variety over a field $L$.
Let $m$ be an integer. 
We will abuse notation and let $A[m]$ denote both the group scheme of $m$-torsion points and the $G_L$-module given by taking $\Lbar$-points of this group scheme.

Assume now that $L$ is a number field. 
We will let $L_l = L(A[l])$. 
We remark that $L_l$ may be defined by literally adjoining the coordinates of torsion points in some model and that this field extension is independent of the model.
If we fix an algebraic closure $\Lbar$ we also have $L_l \cong \Lbar^{\ker\rho_l}$. 
We also note that $G(L_l/L) \cong \im(\rho_l: G_L \to \Aut(A[l]))$. 

Consider now the Tate module $T_lA=\varprojlim A[l^n]$ in the category of Galois modules. 
We let $\rho_{l^{\infty}}:G_L \to \Aut T_lA$ denote action in the underlying representation.
When it is necessary to specify the abelian variety we use $\rho_{l^{\infty},A}$.

Serre's surjectivity theorem says that for an elliptic curve without complex multiplication the image of $\rho_{l}$ surjective for all but finitely many $l$.
This implies that for $l$ sufficiently large $\im(\rho_l) \cong \GL_2(\FF_l)$.
\footnote{
	It is conjectured by Serre that for every number field $L$ there exist some $l_{\max}$ such that for every elliptic curve and all $l\geq l_{\max}$ that $\im(\rho_{E,l})=\GL_2(\FF_l)$.
	In the case $L=\QQ$ it is further conjectured that $\l_{\max}=37$. }
We make this remark because the initial theta data hypotheses of \cite[\S 5]{Dupuy2020a} require $\rho_l(G_F) \supset \SL_2(\FF_l)$ --- Serre's Conjecture says this is generically true.

%--------------------------------------------------
\subsection{Minimal Discriminants and Tate Parameters}
%--------------------------------------------------
We suppose that $E$ is an elliptic curve over a number field $F$ sitting in initial theta data. 
We will assume that the it is semi-stable (all bad places are places with multiplicative reduction).
Note that if it is not semi-stable one can make a finite change of base such that all places of the new field above a place of additive reduction in the old field are places of good reduction. 
Under any base change, places in the new field over places of multiplicative reduction in the old field still have multiplicative reduction (hence the word semi-stable).

In the case that $E$ is an elliptic curve over $L$, a finite extension of $\QQ_p$ by \cite[Ch V, Lemma 5.1]{Silverman2013} if $\vert j_E\vert_p>1$ (which is equivalent to $E$ having multiplicative reduction) there exists a Tate parameter $q=q_E \in \overline{L}$ and an isomorphism of elliptic curves $u: E \to E_q$ defined over $\overline{L}$.
Here $E_q$ is the Tate curve which admits a Tate uniformization.
Note that this implies that all elliptic curves without potential good reduction have a unique Tate parameter at bad places.
In fact: $q_E \in \Q_p(j_E)$ if $L$ is a finite extension of $\Q_p$.

The following describes the relationship between the minimal discriminant and the Tate parameter.
\begin{lemma}
	If $E$ is an elliptic curve over $L$ a complete discretely valued field with valuation $v$ then 
	\begin{enumerate}
		\item If $E$ has multiplicative reduction then $\ord_v(\Delta^{\min})=\ord_v(q_E)$, where $\Delta^{\min}$ is the minimal discriminant $E/L$. 
		\item All Tate curves $E_q$ are minimal Weierstrass models.
	\end{enumerate}
\end{lemma}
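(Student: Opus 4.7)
The plan is to prove both parts by direct computation with the explicit Tate-curve Weierstrass model, and then descend part (1) from $\overline{L}$ to $L$.

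First, I would recall from Tate's original construction (as written up in Silverman, \emph{Advanced Topics in the Arithmetic of Elliptic Curves}, Chapter V) that $E_q$ admits the canonical Weierstrass equation
$$E_q: \ y^2 + xy = x^3 + a_4(q)\, x + a_6(q),$$
with $a_4(q), a_6(q) \in \ZZ\ps{q}$ having no constant term. Since $\ord_v(q) > 0$, these series converge to elements of $\OO_L$, so the equation is integral. The closed-form product expansion
$$\Delta(E_q) = q \prod_{n=1}^{\infty} (1-q^n)^{24}$$
then immediately gives $\ord_v(\Delta(E_q)) = \ord_v(q)$, because each factor $1 - q^n$ is a unit in $\OO_L$.

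Second, for the minimality in (2), I would use the analogous expansion
$$c_4(E_q) = 1 - 48 \sum_{n \geq 1} \frac{n^3 q^n}{1 - q^n} \in 1 + \mathfrak{m}_L,$$
so $\ord_v(c_4(E_q)) = 0$. The standard minimality criterion for complete DVRs (an integral Weierstrass equation is minimal whenever $\ord_v(c_4) < 4$, equivalently $\ord_v(c_6) < 6$) then applies directly and shows that the displayed equation for $E_q$ is already minimal. This is part (2), and in particular $\ord_v(\Delta^{\min}(E_q)) = \ord_v(q)$.

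Finally, for (1) I would descend. The Tate uniformization is defined a priori only over $\overline{L}$, but since $j_E \in L$ has $\ord_v(j_E) < 0$ and the inverse series $q(j) = 1/j + 744/j^2 + \cdots$ converges to a unique element of $\mathfrak{m}_L$, we in fact have $q_E \in L$, so $E_q$ is defined over $L$. The twist class of the $\overline{L}$-isomorphism $u : E \to E_q$ lies in $H^1(G_L, \mathrm{Aut}(E_q)) = H^1(G_L, \{\pm 1\})$; it is trivial when $E$ has split multiplicative reduction and otherwise corresponds to the unique unramified quadratic extension of $L$ (this distinguishes split from non-split reduction). In both cases the twist is by an unramified quadratic character, which scales $c_4$ and $\Delta$ by units; hence the minimal Weierstrass model of $E$ also has $\ord_v(c_4) = 0$ and $\ord_v(\Delta^{\min}(E)) = \ord_v(\Delta^{\min}(E_q)) = \ord_v(q_E)$.

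The only non-formal step is the unramified-twist claim at the end: one must check that the quadratic twist of an integral minimal equation by a unit preserves both integrality and the valuation of the discriminant. This is the main, but genuinely standard, obstacle; after that, everything else reduces to reading off $\ord_v$ from the explicit $q$-expansions of $\Delta$ and $c_4$.
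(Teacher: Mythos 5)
Your argument is correct, but it takes a genuinely different route from the paper. The paper proves~(1) first via Ogg's formula $c = \ord_v(\Delta^{\min}) + 1 - m$: multiplicative reduction forces $c = 1$, so $m = \ord_v(\Delta^{\min})$, and then the Kodaira--N\'eron description of the component group together with Tate uniformization identifies $m$ with $\ord_v(q)$. Part~(2) then drops out from the $q$-expansion of $\Delta_{E_q}$ combined with~(1). You invert this order: you get~(2) directly and elementarily from $\ord_v(c_4(E_q)) = 0$ plus the standard ``$\ord_v(c_4) < 4$ implies minimal'' criterion (which is clean because it works in all residue characteristics, since $c_4 \mapsto u^{-4}c_4$ under the admissible changes of variable), and you then recover~(1) by descending through the at-worst-unramified quadratic twist relating $E$ to $E_q$. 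Your route avoids Ogg's formula and N\'eron models entirely, which is a real simplification; the price is that you must deal explicitly with the split/non-split dichotomy, whereas Ogg's formula absorbs it (the component count in Ogg's formula is geometric, so the paper's argument also implicitly passes to the unramified closure, just less visibly).

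One place to be a little careful, which you flag yourself: the statement that an unramified quadratic twist ``scales $c_4$ and $\Delta$ by units'' is transparent only in odd residue characteristic, where the twist has the familiar $y^2 = x^3 + ad^2 x + bd^3$ form. In residue characteristic~$2$ it is cleaner to phrase the descent as: $\ord_v(\Delta^{\min}_{E/L})$ is unchanged under unramified base change (stability of the N\'eron model, or of Tate's algorithm), over the unramified quadratic extension $L'$ one has $E \cong E_q$, part~(2) applies over $L'$, and finally $\ord_{v'}(q) = \ord_v(q)$ since $L'/L$ is unramified and $q \in L$. That closes the gap without needing explicit twist formulas in characteristic~$2$, and is essentially what you indicate as the ``genuinely standard'' remaining check.
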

\begin{proof}
	The proof of the first assertion follows from Ogg's Formula.
	This formula states
	$$c = \ord(\Delta^{\min}) +1 -m.$$
	Here $c$ is the local conductor exponent, $\Delta^{\min}$ is the minimal discriminant and $m$ is the number of irreducible components $\mathcal{E}_s$ the special fiber of the N\'eron model of $E$ for $R=\OO_L$.
	Since our elliptic curve has multiplicative reduction this implies $c=1$ which implies $m=\ord_v(\Delta^{\min})$.
	Now we have
	$$ \mathcal{E}_s/\mathcal{E}_s^0 \cong E_q(L)/(E_q)_0(L) \cong (L^{\times}/q^{\ZZ})/(R^{\times}/q^{\ZZ}) \xrightarrow{\ord_v} \Z/\ord_v(q).$$
	The first equality is the Kodaira-N\'{e}ron Theorem, where $\mathcal{E}_s$ denotes the special fiber of the N\'{e}ron model and the superscript zero denotes the connected component of the identity. Also $(E_q)_0(L)$ is the kernel of specialization. 
	The second equality follows from Tate uniformization and the last equality follows from taking valuations.  
	From this the equality follows. 
	(see \cite[Appendix C]{Silverman2009}).
	
	We now prove $\Delta_{E_q}$ is minimal.
	We know that 
	 $$ \Delta_{E_q} = q \prod_{n\geq 1} (1-q^n)^{24}. $$
	This shows $\ord_v(\Delta_{E_q}) = \ord_v(q)$ and since $\ord_v(q) = \ord_v(\Delta^{\min}_{E_q})$ from the first assertion of the lemma we are done.
\end{proof}

%-----------------------------------------------
\subsection{Minimal Discriminants and Base Change}
%-----------------------------------------------
The following describes how minimal discriminants behave under base change. 
\begin{lemma}
Let $K/F$ be a finite extension of number fields. 
If $E/F$ is a semi-stable elliptic curve then 
$$[K:F] \ln \vert \Delta_{E/F}^{\min} \vert = \ln \vert \Delta^{\min}_{E_K/K} \vert. $$	
\end{lemma}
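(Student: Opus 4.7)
The plan is to reduce to a place-by-place computation using the hypothesis of semi-stability and the previous lemma. For any number field $L$ and any ideal $I = \prod_v \mathfrak{p}_v^{n_v} \vartriangleleft \mathcal{O}_L$, one has the formula $\ln|I| = \sum_v n_v f(v/\QQ) \ln p_v$, where $p_v$ denotes the residue characteristic of $v$. So it suffices to compare the local contributions on both sides.

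The first step is to dispose of the good places. Good reduction is preserved under arbitrary base change, so if $v \in V(F)$ is a place of good reduction and $w \in V(K)$ lies above $v$, then $E_K/K_w$ also has good reduction, and both $\ord_v(\Delta^{\min}_{E/F_v}) = 0$ and $\ord_w(\Delta^{\min}_{E_K/K_w}) = 0$. These places contribute nothing to either side of the desired equation.

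The second step handles the bad (multiplicative) places. Fix $v$ bad and $w \in V(K)$ with $w \mid v$. Since $E/F$ is semi-stable, $E$ has multiplicative reduction at $v$, so by the previous lemma there is a Tate parameter $q_E \in F_v$ with $\ord_v(\Delta^{\min}_{E/F_v}) = \ord_v(q_E)$. Viewing $q_E$ inside $K_w \supseteq F_v$, the same element serves as the Tate parameter for $E_K/K_w$, which is therefore also multiplicative. Applying the previous lemma once more and using $\ord_w|_{F_v} = e(w/v)\ord_v$, we obtain
\begin{equation*}
\ord_w(\Delta^{\min}_{E_K/K_w}) = \ord_w(q_E) = e(w/v)\,\ord_v(q_E) = e(w/v)\,\ord_v(\Delta^{\min}_{E/F_v}).
\end{equation*}

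Finally, I would assemble the sums. Using $f(w/\QQ) = f(w/v) f(v/\QQ)$ together with the previous display,
\begin{align*}
\ln\vert \Delta^{\min}_{E_K/K}\vert
&= \sum_{v \in V(F)} \sum_{w \mid v} f(w/\QQ)\,\ord_w(\Delta^{\min}_{E_K/K_w}) \ln p_v \\
&= \sum_{v \in V(F)} f(v/\QQ)\,\ord_v(\Delta^{\min}_{E/F_v}) \ln p_v \sum_{w \mid v} e(w/v) f(w/v) \\
&= [K:F] \sum_{v \in V(F)} f(v/\QQ)\,\ord_v(\Delta^{\min}_{E/F_v}) \ln p_v = [K:F]\ln \vert \Delta^{\min}_{E/F}\vert,
\end{align*}
where the inner sum collapses via the standard identity $[K:F] = \sum_{w \mid v} e(w/v) f(w/v)$. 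The only subtlety --- and really the only step requiring the semi-stability hypothesis --- is the preservation of the Tate parameter under base change; without semi-stability, additive places could become good or stay additive, in which case the behavior of $\ord_v(\Delta^{\min})$ is not so cleanly multiplicative in $e(w/v)$, and the equality would instead become an inequality.
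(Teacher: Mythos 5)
Your proof is correct and follows essentially the same route as the paper: convert $\ord_w(\Delta^{\min}_{E_K/K})$ to $\ord_w(q_w) = e(w/v)\ord_v(q_v)$ at multiplicative places via the preceding lemma, then collapse $\sum_{w \mid v} e(w/v) f(w/v) = [K:F]$. The only difference is expository --- you explicitly dispose of the good places before summing, whereas the paper leaves that implicit --- and your closing remark on why semi-stability is needed is a welcome clarification.
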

\begin{proof}
	The proof is a computation:
	\begin{align*}
	\ln \vert \Delta^{\min}_{E_K/K} \vert &= \sum_{w\in V(K)} \ord_w(\Delta^{\min}_{E_K/K}) f(w/p_w) \ln(p_w) \\
	&= \sum_{w\in V(K)} \ord_w(q_w) f(w/p_w) \ln(p_w) \\
	&= \sum_{v \in V(F)} \sum_{w \in V(F)_w} e(w/v) \ord_v(q_v) f(w/v)f(v/p_v) \ln(p_v) \\
	&= \sum_{v\in V(F)} \left( \sum_{w\vert v} [K_w:F_v] \right) \ord_v(q_v)f(v/p_v)\ln(p_v) \\
	&= [K:F] \sum_{v\in V(F)} \ord_v(q_v) f(v/p_v) \ln(p_v) = [K:F]\ln \vert \Delta^{\min}_{E/F} \vert.
	\end{align*}
\end{proof}

%----------------------------------------------
\subsection{Normalized Arakelov Degrees}
%----------------------------------------------
For a number field $L$ and an Arakelov divisors $D \in \Divhat(L)$ the \emph{normalized Arakelov degree} is defined by 
$$\deghatu_{L}(D) = \frac{\deghat_L(D)}{[L:\QQ]}.$$
For $v \in V(L)_0$ with $[v] \in \Divhat(L)$ degrees are normalized so that $\deghat([v]) = \ln \vert Nv \vert = f_v\ln(p_v)$ where $p_v$ is the characteristic of $\kappa(v)$ and $f_v$ is the inertia degree. 
We use the property that the normalized degree is invariant under pullback: if $f: V(L) \to V(L_0)$ is the natural map associated to an extension of number fields $L_0 \subset L$ and $D \in \Divhat(L_0)$ then 
$$ \deghatu_{L_0}(D) = \deghatu_L( f^*D). $$
We record that $f^*[v_0] = \sum_{v\vert v_0} e(v/v_0) [v]$.

%----------------------------------------------
\subsection{$q$ and Theta pilots}
%----------------------------------------------
Fix initial theta data $(\Fbar/F, E_F, l, \Mu, \Vu, V^{\bad}_{\mod}, \epsu)$. 
Furthermore, suppose it is built from the field of moduli so that $V^{\bad}_{\mod}\subset V(F_0)$ contains all the semi-stable places of bad reduction.
\begin{definition}
The \emph{$q$-pilot divisor} of this data is then 
\begin{equation}
P_q = \sum_{v \in V^{\bad}_{\mod}} \ord_v( q_v^{1/2l}) [v] \in \Div(F_0)_{\QQ}.
\end{equation}
\end{definition}

The $q$-pilot is related to the minimal discriminant of an elliptic curve by the following formula:
\begin{equation}
\deghatu_{F_0}(P_q) = \frac{1}{2l} \frac{\ln \vert \Delta^{\min}_{E/F} \vert }{[F:\QQ]}.
\end{equation}
To see this we perform a simple computation:
\begin{align*}
\deghatu_{F_0} \left( \sum_{v \bad} \ord_v(q_v) [v] \right) &= \deghatu_K\left( \sum_{v\in V^{\bad}_{\mod}} \ord_v(q_v) \sum_{w \in V(K)_v} e(w/v)[w] \right)\\
&= \deghatu_K\left( \sum_{v\in V^{\bad}_{\mod}} \sum_{w \in V(K)_v} \ord_w(q_v)[w] \right) \\
&= \deghatu_K \left( \sum_{w \bad} \ord_w(q_w) [w] \right)\\
&= \deghatu_K \left( \sum_{w \bad} \ord_w(\Delta^{\min}_{E_K/K})[w] \right) \\
&= \frac{\ln \vert \Delta^{\min}_{E_K/K} \vert }{[K:\QQ]} = \frac{\ln \vert \Delta^{\min}_{E/F} \vert }{[F:\QQ]}.
\end{align*}

We now discuss Theta pilots. 
\begin{definition}
The \emph{theta pilot divior} is a tuple $P_{\Theta} = (P_{\Theta,j})_{j=1}^{(l-1)/2} \in \Divhat_{\lgp}(F_0)_{\QQ}^{(l-1)/2}$ where 
$$ P_{\Theta,j} = \sum_{v \in V(F_0) \bad} \ord_v( q_v^{j^2/2l}) [v] \in \Divhat(F_0)_{\QQ}.$$	
\end{definition}
The relationship between the theta and $q$-pilots is given by 
\begin{equation}\label{E:trivial-relation}
 \deghat_{F_0}(P_q) = \frac{l(l+1)}{12} \deghat_{\lgp,F_0}(P_{\Theta}).
\end{equation}
This formula is derived by a simple computation:
\begin{align*}
\deghat_{\lgp,F_0}(P_{\Theta})&= \frac{2}{l-1} \sum_{j=1}^{(l-1)/2} \deghat_{F_0}(P_{\Theta,j}) \\
&= \frac{2}{l-1} \sum_{j=1}^{(l-1)/2} j^2 \deghatu_{F_0}(P_q) \\
&= \frac{l(l+1)}{12} \deghat_{F_0}(P_q).
\end{align*}

\begin{remark}\label{R:SS}
	\begin{enumerate}
	\item The assertion of \cite[pg 10]{SS} is that \eqref{E:trivial-relation} is the only relation between the $q$-pilot and $\Theta$-pilot degrees. The assertion of \cite[C14]{comments} is that \cite[pg 10]{SS} is not what occurs in \cite{IUT3}.  
	The reasoning of \cite[pg 10]{SS} is something like what follows:
	\begin{enumerate}
		\item The $\Theta_{\LGP}^{\times \mu}$-link in \cite{IUT3} is a polyisomorphism between $\mathscr{F}^{\Vdash\blacktriangleright \times \mu}$-strips, ${}^{0,0}\mathscr{F}^{\Vdash\blacktriangleright \times \mu}_{\LGP}$ and ${}^{1,0}\mathscr{F}^{\Vdash\blacktriangleright \times \mu}_{\Delta}$. 
		\item Within these objects there are two global realified Frobenioids ${}^{0,0}\mathcal{C}^{\Vdash}_{\LGP}$ and ${}^{1,0}\mathcal{C}^{\Vdash}_{\Delta}$.
		Also there exists objects ${}^{0,0}P^{\Vdash}_{\Theta} \in {}^{0,0}\mathcal{C}^{\Vdash}_{\LGP}$ and ${}^{1,0}P^{\Vdash}_{q} \in {}^{1,0}\mathcal{C}^{\Vdash}_{\Delta}$ called the (0,0) theta pilot object and (1,0) $q$ pilot object respectively and the theta link $\Theta^{\times \mu}_{\LGP}$ is such that $\Theta^{\times \mu}_{\LGP}({}^{0,0}P^{\Vdash}_{\Theta}) = {}^{1,0}P_{q}^{\Vdash}$.
		\item To each such global realified Frobenioids $\mathcal{C}^{\Vdash}$ we can interpret a one dimensional real vector space $\Pic(\mathcal{C}^{\Vdash})$. 
		Also, to any object $P^{\Vdash} \in \mathcal{C}^{\Vdash}$ there is an associated degree $\deg_{\mathcal{C}^{\Vdash}}(P^{\Vdash}) \in \Pic(\mathcal{C}^{\Vdash})$. 
		\item Any isomorphism between ${}^{0,0}\mathcal{C}^{\Vdash}_{\LGP}$ and ${}^{1,0}\mathcal{C}^{\Vdash}_{\Delta}$ induces an isomorphism between $\Pic({}^{0,0}\mathcal{C}^{\Vdash}_{\LGP})$ and $\Pic({}^{1,0}\mathcal{C}^{\Vdash}_{\Delta})$.
		\item An identification one can make is to fix isomorphisms
		 $$ \alpha: \Pic({}^{0,0}\mathcal{C}^{\Vdash}_{\LGP}) \to \RR$$
		 $$\beta: \Pic({}^{1,0}\mathcal{C}^{\Vdash}_{\Delta}) \to \RR $$
		specified by extending linearly
		 $$ \alpha(\deg_{{}^{0,0}\mathcal{C}^{\Vdash}_{\LGP}}({}^{0,0}P^{\Vdash}_{\Theta})) = \deghatu_{\lgp}(P_{\Theta}), $$
		 $$\beta(\deg_{{}^{1,0}\mathcal{C}^{\Vdash}_{\Delta}}({}^{1,0}P^{\Vdash}_{q})) = \deghatu(P_{q}),$$ 
		where the degree on the left hand side are as in the present subsection (\cite[2.1.6]{SS} calls this the canonical trivialization.)
		\item The authors of the present article, Scholze-Stix, and Mochizuki all agree that the items above lead to a contradition.
		Stripping away the abstraction, these assertions are tautologically equivalent to $\deghat_{\lgp,F_0}(P_{\Theta})=(l(l+1))/12) \cdot \deghat_{F_0}(P_q)$ and $\deghat_{\lgp,F_0}(P_{\Theta})=\deghat_{F_0}(P_q)$. This clearly gives a contradiction. 
		\item It is our understanding that no such $\alpha$ map is specified in IUT; meaning that commutativity of the diagram consisting of the map induced by $\Theta^{\times \mu}_{\LGP}$, $\alpha$, and $\beta$ is not asserted.
	\end{enumerate}
	\item We would like to point out that the diagram on page 10 of \cite{SS} is very similar to the diagram on $\S8.4$ part 7, page 76 of the unpublished manuscript \cite{Tan2018} which Scholze and Stix were reading while preparing \cite{SS}.
	\item As of August 1st 2019, the documents above can be found at \url{http://www.kurims.kyoto-u.ac.jp/~motizuki/IUTch-discussions-2018-03.html}. 
	We note that there is also the review \cite{Roberts2019} which some may find interesting.
	\end{enumerate}

\end{remark}
%-----------------------------------------------
\subsection{N\'eron-Ogg-Shafarevich: Conductors and Good Reduction}
%-----------------------------------------------
The following theorem of Serre and Tate, which they call the N\'eron-Ogg-Shafarevich Criterion, tells us how ramification of an $l$-power Tate module is related to the reduction geometry of the N\'eron model of corresponding the abelian variety.
\begin{theorem}[{\cite{Serre1968}}]
	Let $A$ be an abelian variety over a local field $L$ of residue characteristic $p$.
	The following are equivalent.
	\begin{enumerate}
		\item For all $m\in \NN$, $(m,p)=1$, $A[m]$ is unramified. 
		\item There exist a rational prime $l$ such that $l\neq p$ and $T_l(A)$ is unramified. 
		\item There exist infinitely many $m$ with $(m,p)=1$ such that $A[m]$ is unramified. 
		\item $A$ has good reduction.
	\end{enumerate}
\end{theorem}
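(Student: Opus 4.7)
The plan is to close the cycle $(4)\Rightarrow(1)\Rightarrow(3)\Rightarrow(2)\Rightarrow(4)$, where only the last implication carries real content.

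First I would verify $(4)\Rightarrow(1)$ using an abelian scheme model $\mathcal{A}/\OO_L$ of $A$: for $m$ coprime to $p$, multiplication by $m$ on $\mathcal{A}$ is finite flat of degree $m^{2g}$ prime to the residue characteristic, and \'etale on both fibers, hence \'etale by the fibral criterion. Thus $\mathcal{A}[m]$ is finite \'etale over $\OO_L$, and inertia acts trivially on $A[m]=\mathcal{A}[m](\overline{L})$. The step $(1)\Rightarrow(3)$ is immediate. For $(3)\Rightarrow(2)$, from infinitely many $m$ coprime to $p$ with $A[m]$ unramified, either some prime $\ell\neq p$ divides the given $m$'s to unbounded $\ell$-adic valuation, or infinitely many distinct primes $\ell\neq p$ appear as divisors; in either case there exists $\ell\neq p$ for which one can produce, for every $n$, some $m$ in the collection with $\ell^n\mid m$. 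Since $A[\ell^n]\subseteq A[m]$ is then an unramified submodule, $A[\ell^n]$ is unramified for every $n$, so $T_\ell A=\varprojlim_n A[\ell^n]$ is unramified.

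The heart of the matter is $(2)\Rightarrow(4)$. I would take the N\'eron model $\mathcal{A}/\OO_L$ of $A$, which exists as a smooth separated group scheme of finite type, and analyze the identity component of its special fiber via Chevalley's structure theorem, obtaining a short exact sequence
\[
0 \to T\times U \to \mathcal{A}^0_s \to B \to 0,
\]
with $T$ a torus of dimension $t$, $U$ a unipotent group of dimension $u$, and $B$ an abelian variety of dimension $b$, satisfying $b+t+u=g=\dim A$. The key input is Grothendieck's formula (SGA~7) for the inertia invariants of the Tate module: for $\ell\neq p$,
\[
\operatorname{rank}_{\Z_\ell}\bigl((T_\ell A)^{I_L}\bigr) = 2b+t.
\]
Hypothesis (2) makes $(T_\ell A)^{I_L}=T_\ell A$ of rank $2g$, so $2g=2b+t$; combined with $g=b+t+u$ this forces $t=u=0$. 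Hence $\mathcal{A}^0_s$ is an abelian variety of dimension $g$, so $\mathcal{A}^0$ is an abelian scheme over $\OO_L$, and the universal property of the N\'eron model rules out a nontrivial component group: $\mathcal{A}=\mathcal{A}^0$ is itself an abelian scheme and $A$ has good reduction.

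The hard part will be this final implication $(2)\Rightarrow(4)$, which is not really a computation but a packaged citation of (i) the existence and structure of N\'eron models over a DVR and (ii) Grothendieck's formula relating the inertia invariants of $T_\ell A$ to the abelian and toric ranks of the special fiber of the N\'eron model. The most delicate step is passing from ``the identity component of the N\'eron special fiber is an abelian variety of dimension $g$'' to ``$\mathcal{A}$ is an abelian scheme'', i.e., ruling out extra components of $\mathcal{A}_s$, which is where the universal property of the N\'eron model (together with the fact that abelian schemes are their own N\'eron models) is needed.
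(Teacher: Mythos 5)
The paper does not actually prove this theorem; it is stated as a citation to Serre--Tate \cite{Serre1968}, so there is no internal proof to compare against. Your overall plan is the standard one from that source: $(4)\Rightarrow(1)$ via \'etaleness of $[m]$ on a good model, and the substantive direction via the N\'eron model, the Chevalley decomposition of $\mathcal{A}^0_s$, and Grothendieck's formula $\operatorname{rank}_{\Z_\ell}(T_\ell A)^{I_L}=2b+t$. That part is sound.

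However, your implication $(3)\Rightarrow(2)$ is false as written. You correctly observe the dichotomy: with infinitely many $m$ coprime to $p$, either some $\ell\neq p$ divides them with unbounded $\ell$-adic valuation, or infinitely many distinct primes appear as divisors. But the conclusion ``in either case there exists $\ell\neq p$ for which one can produce, for every $n$, some $m$ in the collection with $\ell^n\mid m$'' holds only in the first case. In the second case the $m$'s could be, say, the successive primes distinct from $p$, each to the first power; then no prime divides any $m$ to order $\geq 2$, and your argument produces no $\ell$ for which $T_\ell A$ is unramified. The fix is to prove $(3)\Rightarrow(4)$ directly by a counting argument: for $(m,p)=1$, the specialization map $\mathcal{A}(\OO_{L^{\ur}})\to \mathcal{A}_s(\overline{k})$ has pro-$p$ kernel (the formal group), so if $A[m]$ is unramified it injects into $\mathcal{A}_s(\overline{k})[m]$, whose order is bounded by $|\Phi|\cdot m^{2b+t}$ where $\Phi$ is the finite component group. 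This gives $m^{2u+t}\leq |\Phi|$; choosing $m$ large (possible by hypothesis (3)) forces $t=u=0$, hence good reduction exactly as in your $(2)\Rightarrow(4)$ step. Then $(4)\Rightarrow(2)$ is immediate from $(4)\Rightarrow(1)$, closing the equivalence.
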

We apply this in subsequent sections to get information about the behavior of ramification degrees in our computations.
We can apply this theorem to get a criteria relating the conductor of Abelian varieties to discriminants the of an associate $l$ division field. 

\begin{theorem}
	Let $A$ be an abelian variety over a number field $L$.
	Let $l$ be a rational prime. 
	Let $L_l = L(A[l])$.
	Let $w$ be a non-archimedean place of $L_l$ coprime to $l$ and $\ch(\kappa(w))=p$. 
	The following holds
	$$ e(w/p)>1 \iff w \vert l \mbox{ or } w \vert \Cond(A/L) \mbox{ or } w \vert \Diff(L/\QQ).$$
\end{theorem}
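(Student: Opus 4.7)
The plan is to exploit the multiplicativity of ramification indices in the tower $\QQ \subset L \subset L_l$. Let $v$ denote the restriction of $w$ to $L$, so that $e(w/p) = e(w/v)\cdot e(v/p)$. Consequently $e(w/p) > 1$ iff $e(w/v) > 1$ or $e(v/p) > 1$, and this decomposes the claim into two separate ramification questions which I can treat independently: one purely about the absolute arithmetic of $L$, and one about how $A[l]$ sits over the residue field at $v$.

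For the lower step $L/\QQ$, I would invoke the classical identification of the different as the ``ramification locus'': $e(v/p) > 1$ iff $v \mid \Diff(L/\QQ)$. This is essentially the computation already carried out in \S\ref{S:diff-and-disc}, where $\ord_p(\Diff(K/\QQ_p)) = \ord_p(e) + (e-1)/e$ is positive precisely when $e > 1$. This takes care of the disjunct $w \mid \Diff(L/\QQ)$ (read via the restriction of $w$ to $L$).

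For the upper step $L_l/L$, since $L_l = L(A[l])$ is by definition the fixed field of $\ker \rho_l$, the extension $L_l/L$ is unramified at $v$ iff $\rho_l(I_v)$ is trivial, i.e., iff $A[l]$ is unramified as a $G_L$-module at $v$. Here I would apply the stated N\'{e}ron--Ogg--Shafarevich criterion: under our hypothesis $w \nmid l$ (equivalently $\ch \kappa(v) \neq l$), the unramifiedness of $A[l]$ is tied to the unramifiedness of $T_l A$, which by the theorem is equivalent to $A$ having good reduction at $v$, that is, $v \nmid \Cond(A/L)$. Combining the two identifications with multiplicativity of ramification indices yields the desired equivalence; the disjunct $w \mid l$ in the displayed formula is vacuous under the standing hypothesis and serves only as a placeholder reminding the reader that the wildly ramified case is excluded.

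The main obstacle is the passage from ``$A[l]$ ramified'' to ``$T_l A$ ramified'' in the reverse direction of the upper step. N\'{e}ron--Ogg--Shafarevich, as cited, gives unramifiedness of $T_l A$ (and hence of $A[l]$) from good reduction, but the converse implication actually needed here---that bad reduction at $v$ with $v\nmid l$ forces $A[l]$ itself to be ramified---can in principle fail (for instance for an elliptic curve with multiplicative reduction at $v$ when $l \mid \ord_v(q_v)$, in which case $A[l]$ is given by $\mu_l \oplus \langle q^{1/l}\rangle$ and can become unramified). In the initial theta data regime this degeneracy is precluded by the standing hypotheses on $l$ (in particular $l$ is coprime to the valuations of the Tate parameters at the bad places), and I would flag this explicitly when writing up the argument.
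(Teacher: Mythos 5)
Your decomposition $e(w/p) = e(w/v)\,e(v/p)$, the treatment of the lower factor via $\Diff(L/\QQ)$, and the appeal to N\'{e}ron--Ogg--Shafarevich for the upper factor mirror the paper's proof exactly; the paper likewise retains the vacuous disjunct $w \vert l$, handling it via $L_l \supset \QQ(\zeta_l)$. So the approach is the same.

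The obstacle you flag in the converse direction is real, and the paper's own argument shares it. The paper closes the converse by writing the chain
\[
v \vert \Cond(A/L) \iff c_v \neq 0 \iff v \mbox{ is ramified} \iff I_{w/v}\neq 1,
\]
but the last link silently passes from ramification of $T_lA$ (which is what N\'{e}ron--Ogg--Shafarevich gives from bad reduction when $v\nmid l$) to ramification of $A[l]$ (which is what $I_{w/v}\neq 1$, i.e.\ ramification of $L_l/L$ at $v$, actually records). These are not equivalent in general: for an elliptic curve with multiplicative reduction at $v$ and $v\nmid l$, inertia acts on $T_lE$ through a unipotent whose off-diagonal entry is essentially $\ord_v(q_v)$, so the action on $E[l]$ is trivial exactly when $l \mid \ord_v(q_v)$, in which case $E[l]$ is unramified at $v$ even though $v\vert\Cond(E/L)$. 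If moreover $v$ is unramified over $p$, then $e(w/p)=1$ while $w \vert \Cond(A/L)$, so the stated biconditional fails as written for general $A$. What survives unconditionally is the forward implication ($e(w/p)>1$ implies the disjunction), and that is the direction actually invoked downstream in \S\ref{S:explicit} when bounding $\vert\Disc(K/\QQ)\vert$ by the conductor and $\vert\Disc(F/\QQ)\vert$. Your remark that the initial-theta-data hypotheses on $l$ preclude this degeneracy at the relevant places is the correct way to salvage the full biconditional in the intended application, and it is worth making explicit when you write this up.
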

\begin{proof}
	Suppose that $e(w/p)>1$. 
	Since $e(w/p) = e(w/v)e(v/p)$, where $v\in V(L)$ is the image of $w\in V(L_l)$ under the natural map $V(L_l) \to V(L)$, we must have $e(w/v)>1$ or $e(v/p)>1$. 
	If $e(v/p)>1$ then $v \vert \Diff(L/\QQ)$ which implies $w\vert \Diff(L/\QQ)$.
	If $e(w/v)>1$ then $I_{w/v} \neq 1$ since $\#I_{w/v} = e(w/v)$. 
	Since $\rho_l: G(L_l/L) \to \Aut(A[l])$ is injective and $T_l$ has $A[l]$ as a quotient, we know that $\rho_l(I_{\vbar/v}) \neq 1$ and hence $T_lA$ is ramified. 
	This implies $v\vert \Cond(A/L)$.
	The final option is $w\vert l$. 
	
	Conversely suppose
	 $$ w \vert l \mbox{ or }  w \vert \Cond(A/L) \mbox{ or } w \vert \Diff(L/\QQ). $$
	
	If $w \vert l$ then since $L_l \supset \QQ(\zeta_l)$ we have $e(w/l) > l-1$. 
	If $w \vert \Diff(L/\QQ)$ then by definition $e(v/p)>1$. 
	If $w \vert \Cond(A/L)$ then $v \vert \Cond(A/L)$ since $L_l/L$ is Galois. 
	We know that 
	 $$ v \vert \Cond(A/L) \iff c_v \neq 0 \iff v \mbox{ is ramified } \iff I_{w/v} \neq 1. $$
	This proves the result. 
	Above, $c_v=\ord_v(\Cond(A/L)$.
\end{proof}

%%%%%%%%%%%%%%%%%%%%%%%%%%%%%%%%%%%%%%%%%%%%%%%%%%%%%%%%
\section{Estimates on $p$-adic Logarithms}\label{S:log-bounds}
%%%%%%%%%%%%%%%%%%%%%%%%%%%%%%%%%%%%%%%%%%%%%%%%%%%%%%%
The material in this section is applied in \S\ref{S:hulls} in order to obtain a bound on the smallest polydisc containing tensor products of log-shells.
In what follows we will let $\log$ denote the $p$-adic logarithm, $\ln$ denote the real valued natural logarithm, and $\log_p$ denote the real valued base $p$ logarithm.
We refer the reader to \cite{Robert2000} for a quick review of elementary properties of the $p$-adic logarithm.
See also \cite[\S 2]{Dupuy2020c}.

%------------------------------------
\subsection{Notation}
%------------------------------------

\subsubsection{} We let $\C_p$ be the $p$-adic completion of $\overline{\Q_p}$ and let $\ord_p$ be the unique extension of the valuation on $\Q_p$ to $\C_p$ with $\ord_p(p)=1$.
We normalize the $p$-adic absolute values by $\vert x \vert_p = p^{-\ord_p(x)}$. 

If $K$ is local field with uniformizer $\pi_K$ we let $\ord_K$ denote the valuation normalized by $\ord_K(\pi_K)=1$. 
In the case that $L$ is a global field and $v \in V(L)$ is a non-archimedean place, we let $\ord_v = \ord_{L_v}$ denote the normalized valuation on $L_v$.

\subsubsection{} Let $K/K_0$ be a finite extension of non-archimedean fields of residue characteristic $p$. 
We will let $e(K/K_0)$ denote the ramification degree of the extension. 
We will say $e(K/K_0)$ is \emph{small} provided $e(K/K_0)<p-1$. 
Note that small implies tame. 

If $L'\supset L$ is an extension of number fields and $v'\vert v$ are places of the respective number fields we let $e(v'/v) := e(L'_{v'}/L_v)$. 
If $L$ is a number field, we say that a non-archimedean place $v$ of $L$ is \emph{small} if $L_v/\QQ_p$ is small.

\subsubsection{} For a $p$-adic field $K$, $a\in K$ and $r\geq0$ a real number we will denote the closed disc of radius $r$ by
$$D_K(a,r) = \lbrace x \in K: \vert x \vert_p \leq r \rbrace.$$
Similarly if $L = \bigoplus_{j=1}^m L_j$ is a finite direct some of $p$-adic fields, $\vec{a} = (a_1,\ldots,a_m) \in L$ and $\vec{r} = (r_1,\ldots,r_m)$ is a vector of non-negative real numbers then we will denote the polydisc of polyradius $\vec{r}$ by 
$$D_L(\vec{a},\vec{r}) = \lbrace (x_1,\ldots,x_m) \in L: \vert x_1 \vert_p \leq r_1 \mbox{ and } \cdots \mbox{ and } \vert x_m \vert_p \leq r_m \rbrace. $$

When writing $D_L(0,R)$ where $R\in \RR$ we will understand this to mean $D_L(0,(R,R,\ldots,R))$.

%-----------------------------------------------------------
\subsection{Estimates on The Size of The $p$-Adic Logarithm}
%-----------------------------------------------------------
We begin by estimating the size of the $p$-adic logarithm (c.f. \cite[Prop 1.2]{IUT4}). 
\begin{lemma}[Crude Estimate]\label{L:log-bounds}
	Let $a\in \C_p$, with $\ord_p(a)>1$. 
	We have 
	 $$ \vert \log(1+a)\vert_p < \frac{c_p}{\ord_p(a)},$$
	where $c_p = (\exp(1)\ln(p))^{-1}$, where $\exp(1) = 2.71828182\ldots$ is the base of the natural log.
\end{lemma}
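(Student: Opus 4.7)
The plan is to estimate the Mercator series
\[
\log(1+a) \;=\; \sum_{n=1}^{\infty} \frac{(-1)^{n+1}}{n}\, a^n
\]
term by term, using the non-archimedean triangle inequality. Writing $v := \ord_p(a) > 1$, the series converges because $\ord_p(a^n/n) = nv - \ord_p(n) \to \infty$, and the ultrametric bound gives
\[
|\log(1+a)|_p \;\leq\; \max_{n \geq 1} p^{\,\ord_p(n) - nv}.
\]
This reduces the lemma to an optimization problem over positive integers $n$.

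The key step is the elementary inequality $\ord_p(n) \leq \log_p(n)$ (which follows from $p^{\ord_p(n)} \mid n$). Substituting this into the exponent, I would then minimize the smooth function $\varphi(x) := xv - \log_p(x)$ on $(0,\infty)$. A single derivative calculation gives the unique critical point $x^\ast = 1/(v\ln p)$, and a direct substitution yields the minimum value $\varphi(x^\ast) = \log_p(e\,v\ln p)$. Consequently
\[
|\log(1+a)|_p \;\leq\; p^{-\log_p(e\,v \ln p)} \;=\; \frac{1}{e\,v \ln p} \;=\; \frac{c_p}{v},
\]
which is the asserted inequality in its non-strict form.

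The one point requiring care is strictness. Equality throughout the chain would demand simultaneously that $\ord_p(n) = \log_p(n)$ for the optimizing $n$ (forcing $n$ to be a power of $p$) and that this $n$ coincide with the continuous optimizer $x^\ast = 1/(v\ln p)$; but $v > 1$ and $p \geq 2$ force $x^\ast < 1$, so no positive integer can attain equality. Equivalently, I would reduce to checking $p^v > e\,v\ln p$, i.e.\ $u - \ln u > 1$ with $u = v\ln p$; the function $u - \ln u$ attains its minimum $1$ only at $u = 1$, and $u = v\ln p \neq 1$ because $v \in \mathbb{Q}$ (the value group of $\ord_p$ on $\mathbb{C}_p$) while $1/\ln p$ is transcendental. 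The main ``obstacle'' is thus merely the bookkeeping for this strict inequality; the rest of the argument is a standard ultrametric estimate combined with a routine single-variable optimization.
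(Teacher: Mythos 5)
Your proof follows the paper's argument exactly: estimate $|\log(1+a)|_p$ by the maximal term $|a^n/n|_p$ of the series, replace $\ord_p(n)$ by the upper bound $\log_p(n)$, and minimize the resulting smooth function $x \mapsto xv - \log_p(x)$, with critical point $x^* = 1/(v\ln p)$ and minimum value yielding the radius $c_p/v$. The paper's proof actually only establishes the non-strict $\leq$, so your discussion of strictness is a genuine bonus; but note that your first justification is wrong: for $p=2$ and $1 < v < 1/\ln 2 \approx 1.44$ one has $x^* > 1$, so ``$v>1$ and $p\geq 2$ force $x^*<1$'' fails. The ``equivalently'' fallback (reducing to $u - \ln u > 1$ for $u = nv\ln p$ and observing $u \neq 1$ since $\ln p$ is transcendental while $v \in \QQ$) is what actually carries the strict inequality, and that part is sound.
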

\begin{proof}
	To get an upper bound on $ \vert -\log(1-a)\vert_p = \vert  \sum_{n\geq 1} \frac{a^n}{n} \vert_p $
	for $\vert a \vert_p< 1$ it suffices to compute $\max \vert a^n/n \vert_p$. 
	Equivalently, we can compute the minimum of $\ord_p(a^n/n)$. 
	We find these lower bounds by using 
	$$\ord_p(a^n/n) = n\ord_p(a) - \ord_p(n)\geq n \ord_p(a) -\log_p(n),$$ 
	and minimizing the function 
	$$f(x) = xc - \log_p(x).$$
	The function has global minimum at $x_0 = 1/c\ln(p)$ which gives 
	$$ f(x) \geq f(x_0)=\frac{1}{\ln(p)}+\log_p(c\ln(p)). $$
	Converting this lower bound on the order to an upper bound on the $p$-adic absolute value gives our result. 
	\footnote{One could have also used $\ord_p(a^n/n) = p^m \ord_p(a) -m.$
		along the sequence $n = p^m$. This will give different, less useful bounds.
	See the remark below.} 
\end{proof}

\begin{remark}
	One can also minimize the function $f(x) = p^x c - x$ giving $\vert \log(1+a) \vert_p \leq b_p \frac{\vert a \vert_p }{\ord_p(a)}$, where $b_p=\frac{1}{\ln(p)e^{\ln(p)^2}}$. 
	This is not of any use to us.
\end{remark}

The application of Lemma~\ref{L:log-bounds} gives an upper bound on the smallest radius $r$ such that $\log(\OO_K^{\times}) \subset D_K(0,r)$ where $K$ is a finite extension of $\QQ_p$.
With knowledge that $e(K/\QQ_p)$ is small we can do much better. 
We state these results and omit the proofs.
\begin{lemma}\label{L:log-bounds2}
	Let $K/\Q_p$ be a finite extension. 
	\begin{enumerate}
		\item \label{I:easy-log} With no assumptions on the ramification of $K/\Q$ we have $\log(\OO_K^{\times}) \subset D_K(0, \frac{e(K/\Q_p)}{\ln(p)\exp(1)})$.
		\item If $e(K/\Q_p)<p-1$ then $\log(\OO_K^{\times}) = \pi \OO_K$ where $\pi$ is the uniformizer of $K$.
	\end{enumerate}
\end{lemma}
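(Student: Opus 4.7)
The plan is to reduce everything to the principal units via the standard decomposition $\OO_K^\times = \mu_{(p')} \times (1+\pi\OO_K)$, where $\mu_{(p')}$ denotes the roots of unity of order prime to $p$ (Teichm\"uller lifts). Writing $u \in \OO_K^\times$ as $u = \omega\cdot(1+a)$ with $\omega \in \mu_{(p')}$ and $a \in \pi\OO_K$, the Iwasawa convention $\log(\omega) = 0$ gives $\log(u) = \log(1+a)$, so it suffices to control $\log$ on principal units. Here $\ord_p(a) \geq 1/e$, where $e := e(K/\Q_p)$.

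For part~(\ref{I:easy-log}), I would just plug into Lemma~\ref{L:log-bounds}: since $\ord_p(a) \geq 1/e$, we get
$$ |\log(u)|_p = |\log(1+a)|_p \leq \frac{c_p}{\ord_p(a)} \leq c_p \cdot e = \frac{e}{\ln(p)\exp(1)}, $$
which is precisely the claimed bound $\log(\OO_K^\times) \subset D_K(0, e/(\ln(p)\exp(1)))$.

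For part (2), the key point is that $e < p-1$ is equivalent to $1/e > 1/(p-1)$. Since the $p$-adic exponential $\exp(x) = \sum_{n\geq 0} x^n/n!$ converges on the disc $\{x : \ord_p(x) > 1/(p-1)\}$, it is in particular defined on all of $\pi\OO_K$. I would then verify term-by-term that $\exp$ maps $\pi\OO_K$ into $1+\pi\OO_K$, using the standard estimate $\ord_p(n!) \leq (n-1)/(p-1)$, and that $\log$ maps $1+\pi\OO_K$ into $\pi\OO_K$, using $\ord_p(n) \leq \log_p(n)$. Since $\exp$ and $\log$ are formal inverses, these inclusions make them mutually inverse bijections between $\pi\OO_K$ and $1+\pi\OO_K$. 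Combined with the reduction $\log(u) = \log(1+a)$ from the first paragraph, this gives the equality $\log(\OO_K^\times) = \pi\OO_K$.

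The main obstacle is the term-by-term verification that $\log(1+\pi\OO_K) \subset \pi\OO_K$: the leading term $a$ trivially has $\ord_p(a) \geq 1/e$, but one must check every term $a^n/n$ satisfies $\ord_p(a^n/n) \geq 1/e$. This reduces to the inequality $n-1 \geq e\,\ord_p(n)$ for all $n \geq 2$, which under $e \leq p-2$ follows from $n \geq p^{\ord_p(n)}$ together with a short case analysis on $\ord_p(n)$. Once this elementary number-theoretic bookkeeping is in place, both parts drop out immediately.
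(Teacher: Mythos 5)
Your proof is correct. The paper states Lemma~\ref{L:log-bounds2} without proof, so there is no in-text argument to compare against, but your approach --- factor $\OO_K^\times = \mu_{(p')}\times(1+\pi\OO_K)$, reduce to principal units via the Iwasawa convention, apply Lemma~\ref{L:log-bounds} for part (1), and establish the $\exp/\log$ bijection between $\pi\OO_K$ and $1+\pi\OO_K$ for part (2) --- is the standard one and is clearly what the surrounding text in \S\ref{S:log-bounds} has in mind.

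One small point worth flagging: in part (1) you apply Lemma~\ref{L:log-bounds} to $a\in\pi\OO_K$, where $\ord_p(a)$ can be as small as $1/e<1$, so the stated hypothesis $\ord_p(a)>1$ of that lemma is not literally met when $e\geq 2$. This is harmless: the paper's proof of Lemma~\ref{L:log-bounds} explicitly runs ``for $|a|_p<1$'', i.e.\ $\ord_p(a)>0$, and the minimization of $f(x)=x\,\ord_p(a)-\log_p(x)$ over real $x>0$ that produces the bound $c_p/\ord_p(a)$ is valid for any positive $\ord_p(a)$ --- so the ``$>1$'' in that statement is evidently a slip for ``$>0$'', and your application is sound. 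The elementary verifications you invoke in part (2) --- convergence of $\exp$ on $\pi\OO_K$ from $\ord_p(n!)\leq(n-1)/(p-1)$ together with $1/e>1/(p-1)$, and $\ord_p(a^n/n)\geq 1/e$ from $n-1\geq p^{\ord_p(n)}-1\geq(p-1)\ord_p(n)\geq e\,\ord_p(n)$ --- all check out, and the formal-inverse relation between $\exp$ and $\log$ upgrades the two inclusions to the claimed equality.
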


\begin{remark}
	In \cite[Prop 1.2]{IUT4} Mochizuki proves $\log(\OO_K^{\times}) \subset p^{-b}\OO_K$ where $b = \lfloor \ln(\frac{pe(K/\Q)}{p-1})/\ln(p) \rfloor - \frac{1}{e(K/\Q)}$. 
	As far as usability goes, the formula in Lemma~\ref{L:log-bounds2} while weaker, seems to be easier to understand. 
\end{remark}

\subsection{$p$-Adic Log Shells}\label{S:log-shells}
The present section collects and reformulates some of the material in \cite{AAG3}.
\begin{definition}
	Let $K/\Q_p$ be a finite extension. 
	The \emph{log-shell} of $K$ is the $\Z_p$-submodule of $K$ defined by $\Ical_K=\frac{1}{2p}\log(\OO_K^{\times})$
\end{definition}

\begin{lemma}[Upper Semi-Compatibility]
	$\Ical_K$ contains both $\OO_K$ and $\log(\OO_K^{\times})$. 
\end{lemma}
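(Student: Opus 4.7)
The plan is to verify the two containments separately, both of which follow from elementary properties of the $p$-adic logarithm and exponential.

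For the inclusion $\log(\OO_K^\times) \subset \Ical_K$, I would simply exploit the fact that $\OO_K^\times$ is closed under taking $(2p)$-th powers. Given any $u \in \OO_K^\times$, we have $u^{2p} \in \OO_K^\times$ and the logarithm homomorphism property yields $\log(u^{2p}) = 2p\log(u)$. Therefore $\log(u) = \tfrac{1}{2p}\log(u^{2p}) \in \tfrac{1}{2p}\log(\OO_K^\times) = \Ical_K$. This part is essentially tautological.

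For the inclusion $\OO_K \subset \Ical_K$, I would use the $p$-adic exponential as a partial inverse to $\log$. Given $a \in \OO_K$, the element $2pa$ has $\ord_p(2pa) \geq \ord_p(2p)$, which exceeds $\tfrac{1}{p-1}$ for every prime $p$ (treating $p = 2$, where $\ord_2(4a) \geq 2 > 1$, as a separate check). Thus $\exp(2pa)$ converges $p$-adically, lies in $1 + 2p\OO_K \subset \OO_K^\times$, and satisfies $\log(\exp(2pa)) = 2pa$. Writing $a = \tfrac{1}{2p}\log(\exp(2pa))$ exhibits $a$ as an element of $\Ical_K$.

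The only subtlety, which I expect to be the mildest of obstacles, is verifying the convergence of $\exp$ at $2pa$ uniformly in the residue characteristic; the factor of $2$ built into the definition of $\Ical_K$ is precisely what is needed to handle the prime $p=2$. No use of the ramification of $K/\QQ_p$ is required, which is consistent with the role $\Ical_K$ plays as a uniform container in subsequent applications.
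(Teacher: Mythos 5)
Your proof is correct and follows essentially the same route as the paper. The paper declares the first inclusion "clear" (the observation is exactly yours: $\log(\OO_K^\times)$ is closed under multiplication by $2p \in \ZZ_p$, so it sits inside $\frac{1}{2p}\log(\OO_K^\times)$), and for the second it invokes the identity $\log(1+2p\OO_K) = 2p\OO_K$, which is precisely the fact you unpack via the convergent exponential $\exp(2pa)$ and the inverse relation $\log \circ \exp = \id$ on the relevant disc. Your explicit check of the $p=2$ case and the remark that no ramification data enters are correct and worth keeping, but they do not constitute a different argument.
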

\begin{proof}
	It is clear that $\log(\OO_K^{\times}) \subset \Ical_K$. 
	Conversely, since $\vert 2p\vert_p < r_p$ we have $\log(1+2p\OO_K) = 2p\OO_K$ since $\ord_p(wp) > 1/(p-1)$.
	Hence 
	$$\Ical_K \supset \frac{1}{2p}\log(1+2p\OO_K) = \frac{1}{2p}(2p \OO_K) =\OO_K. $$
\end{proof}

\begin{remark}[Module Structures on $\log(\OO_K^{\times})$]
	For $K/\Q_p$ a finite extension we not that $\log(\OO_K^{\times})$ has the structure of an $\OO_K$-module very rarely. 
	In order for $\log(\OO_K^{\times})$ to be an $\OO_K$-module we need 
	$$ a \log(b) = \log(b^a) $$
	for $a\in \OO_K$ and $b\in \OO_K^{\times}$. 
	This in turn depends on the convervence of $b^a = \sum_{n=0}^{\infty} \frac{a(a-1) \cdots (a-n+1)}{n!}(b-1)^n$.
	We will not pursue this here, as estimates will not be needed. 
	On the other hand we do observe that $\log(\OO_K^{\times})$ is always a $\Z_p$-module for exactly the same reason.
\end{remark}

%%%%%%%%%%%%%%%%%%%%%%%%%%%%%%%%%%%%%%%%%%%%%%%%%%%%%%%%
\section{Archimedean Logarithms}\label{S:archimedean}
%%%%%%%%%%%%%%%%%%%%%%%%%%%%%%%%%%%%%%%%%%%%%%%%%%%%%%%
%Fix initial theta data $(\Fbar/F, E, l, \Mu, \Vu, V^{\bad}_{\mod}, \epsu)$. 
%Let $F_0 = \QQ(j_E)$ as usual.
%.
In order for our estimates to be complete we require definitions and estimates for $\hull(U_{\Theta})$ at the Archimedean factor $\LL_{\infty}$.
For $\vec{v} =(\vu_0,\ldots,\vu_j) \in V(F_0)_{\infty}^{j+1}$ we will let $H_{\vec{\vu}}$ denote the component of $\hull(U_{\Theta})$ in $K_{\vu_0}\otimes \cdots \otimes K_{\vu_j}$ (since $\sqrt{-1} \in K$ we know that $K_{\vu} \cong \CC$ for each $\vu \in \Vu$).

\begin{claim}\label{L:arch-log-bounds} 
	If $\vec{v} \in V(F_0)_{\infty}^{j+1}$ then $H_{\vec{\vu}} \subset D_{L_{\vec{\vu}}}(0; R_{\vec{\vu}})$ where $\ln( R_{\vec{\vu}} ) = (j+1) \ln(\pi)$.
\end{claim}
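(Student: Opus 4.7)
The plan is to unpack the definitions at the archimedean places and then run the same ``product of factors'' estimate used for the nonarchimedean hull, with the key input being that the natural ``archimedean log-shell'' sits inside a disc of radius $\pi$.

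First, I would recall that for $\vu \in V(F_0)_\infty$ we have $K_{\vu} \cong \CC$ (since $\sqrt{-1} \in K$), and the archimedean analogue of the log-shell $\Ical_{K_{\vu}}$ used in the hull definition of \cite{Dupuy2020a} is (up to the chosen normalization) the closed disc $\{z \in K_{\vu} : |z| \leq \pi\}$. This comes from the fact that the principal branch of $\log$ on the unit circle $S^1 \subset K_{\vu}^\times$ lands in $i[-\pi,\pi]$, so the relevant bounded region for the log of the units has radius $\pi$. I would state this as the archimedean analogue of the $p$-adic bounds established in Lemma~\ref{L:log-bounds} and \S\ref{S:log-shells}.

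Next, I would pass to the tensor product $L_{\vec{\vu}} := K_{\vu_0} \otimes \cdots \otimes K_{\vu_j}$ (taken over $\RR$ or the appropriate base, per the conventions of \cite[\S4]{Dupuy2020a}). This decomposes as a finite direct sum of copies of $\CC$, and in each component the natural map $K_{\vu_0} \times \cdots \times K_{\vu_j} \to L_{\vec{\vu}}$ sends a tuple $(z_0,\ldots,z_j)$ to a multiple of $z_0 \cdots z_j$ (of modulus equal to $|z_0|\cdots|z_j|$ under the componentwise absolute value). Consequently, the image of the product of the $j+1$ archimedean log-shells, each of radius $\pi$, in any component of $L_{\vec{\vu}}$ is contained in the closed disc of radius $\pi^{j+1}$.

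Finally, the component $H_{\vec{\vu}}$ of $\hull(U_\Theta)$ at $\vec{\vu}$ is, by definition, the smallest polydisc containing the image of the archimedean part of the tensor packet generated by the log-shells. By the previous paragraph it is contained in $D_{L_{\vec{\vu}}}(0;R_{\vec{\vu}})$ with $R_{\vec{\vu}} = \pi^{j+1}$, giving $\ln R_{\vec{\vu}} = (j+1)\ln \pi$ as claimed. The only real obstacle is bookkeeping: making certain that the normalization of the archimedean log-shell used in the hull construction of \cite{Dupuy2020a} really does give radius exactly $\pi$ (as opposed to, say, $2\pi$ or $\pi/2p$), and that the absolute value on each component of $L_{\vec{\vu}}$ is multiplicative in the tensor factors; both are straightforward to verify but essential for the stated constant.
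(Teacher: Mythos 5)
The paper itself does \emph{not} prove Claim~\ref{L:arch-log-bounds}. It is deliberately stated as a ``Claim'' rather than a lemma, appears as a hypothesis in every main theorem, and immediately after the statement the authors write that they ``do not develop the theory necessary to discuss this bound'' because it requires an archimedean theory (aut-holomorphic spaces, \cite[Definition 1.1]{IUT3}) parallel to the $p$-adic theory, and they defer the proof to \cite[Proposition 1.5, Proof of Theorem 1.10, step vii]{IUT4}. So there is no argument in the paper against which to compare your proposal; you are attempting to supply something the paper intentionally leaves as a black box.

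Given that, your sketch does not fill the gap, because it is built on precisely the undeveloped foundations. Your first step asserts that ``the archimedean analogue of the log-shell $\Ical_{K_{\vu}}$ used in the hull definition \ldots is (up to the chosen normalization) the closed disc of radius $\pi$''; but no such archimedean log-shell is defined anywhere in this paper, and pinning down that definition and its normalization is exactly the archimedean theory the authors decline to develop. Your instinct about the origin of the constant is right (it does come from $\log(S^1)\subset i[-\pi,\pi]$), but that is a heuristic for the constant, not a definition of the object. More seriously, the ``product of factors'' step cannot be run the same way as in the $p$-adic case: in \S\ref{S:hulls} the $p$-adic $\Ical_{\vec\vu}$ is a tensor product of $\ZZ_p$-lattices and is therefore itself a bounded lattice, but a closed disc $D_{K_\vu}(0,\pi)\subset\CC$ is not a $\ZZ$- or $\RR$-lattice, and the image of $D(0,\pi)\otimes\cdots\otimes D(0,\pi)$ inside $K_{\vu_0}\otimes_\RR\cdots\otimes_\RR K_{\vu_j}$ in the additive/module sense is unbounded (finite sums $\sum a_i\otimes b_i$ with each $|a_i|,|b_i|\leq\pi$ have unbounded component norms under the Chinese-remainder splitting). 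So ``the image \ldots is contained in the closed disc of radius $\pi^{j+1}$'' is not justified by the module-tensor reasoning you invoke; the archimedean hull must be cut out by a different kind of integral structure, which is what Mochizuki's archimedean theory supplies and what this paper cites rather than reconstructs. The ``straightforward to verify'' bookkeeping you flag at the end is, in fact, the hard part and is exactly why the paper treats this as an external assumption.
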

We do not develop the theory necessary to discuss this bound as this requires an Archimedean theory parallel to the $p$-adic theory in \cite{Dupuy2019b}. 
A full anabelian treatment requires so-called aut-holomorphic spaces.
The starting place is \cite[Definition 1.1]{IUT3}.
The claim above can be found in \cite[Proposition 1.5, Proof of Theorem 1.10, step vii]{IUT4}.

%%%%%%%%%%%%%%%%%%%%%%%%%%%%%%%%%%%%%%%%
\section{Upper Bounds on Hulls}\label{S:hulls}
%%%%%%%%%%%%%%%%%%%%%%%%%%%%%%%%%%%%%%%%
We now come to the section of the paper which contains the first major computation. 
Fix initial theta data $(\Fbar/F, E_F, l, \Mu, \Vu, V^{\bad}_{\mod}, \epsu)$.
In this section our goal is to find, for each prime $p$ and each $\vec{v} \in \coprod_{j=1}^{(l-1)/2} V(F_0)_p^{j+1}$, the smallest poly-disc $D_{L_{\vec{\vu}}}(0,R_{\vec{\vu}})$ such that the component of the multiradial representation at $\vec{\vu}$ is contained in this polydisc. 
The smallest possible polydisc here is called the hull. 
%--------------------------------
\subsection{Hulls}
%--------------------------------

If $L= \bigoplus_{j=1}^m L_j$ is a finite direct sum of $p$-adic fields and $\Omega \subset L$ then lets define $R_i(\Omega) = \max\lbrace \vert x_i \vert_p: (x_1,\ldots,x_m) \in \Omega \rbrace,$ 
then define the \emph{poly-radius of $\Omega$} to be 
$$ \vec{R}(\Omega) = (R_1(\Omega),\ldots,R_m(\Omega)). $$
Define the \emph{hull} of $\Omega$ to be the smallest poly-disc containing $\Omega$:
$$\hull(\Omega) = D_L(0, \vec{R}(\Omega)).$$

It is easy to check that if $\alpha  = (\alpha_1,\ldots,\alpha_m)\in L$ then 
$$\vec{R}(\alpha \cdot \Omega) = (\vert \alpha_1 \vert_p R_1(\Omega),\ldots, \vert \alpha_m \vert_p R_m(\Omega)).$$ 
Also, given a collection of compact regions $\Omega_i \subset L$ where $i=1,2,\cdots$  then for each $j$ where $1\leq j \leq m$ we have  
$$R_j( \bigcup_{i=1}^{\infty} \Omega_i ) = \sup\lbrace R_j(\Omega_i): i \geq 1 \rbrace.$$
Note that the right hand of the above equality is possibly infinite.

We now state some basic properties of hulls. 
For $A,B \subset L$ we will write 
$$A \subsetsim B \iff \hull(A) \subset \hull(B). $$
Note that $A\subsetsim B$ if there exists some $\QQ_p$-linear tranformation $T:L\to L$ with $\vert \det(T)\vert_p=1$ and $T(A) \subset B$ (such a $T$ could be multiplication by a unit of $L$ for example). 
Also if $\Omega \subset L$ and $a \in K_m$ (which we view as acting on $L$ via multiplication on the $m$th tensor factor) then $a^{\NN}\cdot \Omega \subsetsim a \cdot \Omega$.
To see that $\hull(a^{\NN} \cdot \Omega) \subset \hull(a\cdot \Omega)$, we observe that $a \in K_m$  acts on each direct summand of $L$ by $\psi_j(a)$ where we have written $L = \bigoplus L_{\psi_j}$ using the Chinese Remainder formulas developed in \S\ref{S:chinese}. 
This gives 
$R_j(a^{\NN}\cdot \Omega) = \sup\lbrace R_j(a^n\cdot \Omega) : n\geq 1 \rbrace = \vert a \vert_p R_j(\Omega)$. 
This implies $R_j(a^{\NN} \cdot \Omega) \leq R_j(a\cdot\Omega)$ and hence $\hull(a^{\NN}\cdot \Omega) \subset \hull(a\cdot\Omega)$.	

\subsection{Worst Case Scenario}\label{S:worst-case-scenario}
We now give a toy-version of our the computation of the hull bound associated to a tuple $\vec{v} \in V(F_0)^{j+1}$. 
Here we make assumptions on ramification of our fields. 

Let $K_1,\ldots,K_m$ be finite extensions of $\QQ_p$ (in our actual application $m$ will be $j+1$). 
Let $a \in K_m$ with $\vert a\vert_p <1$. 
Let $L = \bigotimes_{i=1}^m K_i \cong \bigoplus_{i=1}^r L_j$ where the factors of the right hand side come from the Chinese Remainder Theorem as in \S\ref{S:chinese}. 

In what follows we will let $\Ical = \bigotimes_{i=1}^m \Ical_{K_i}$ be the tensor product of log-shells and $\Aut(L:\Ical)$ denote the collection of $\QQ_p$-vector space automorphisms of $L$ obtained by extending $\QQ_p$-linearly $\ZZ_p$-lattice automorphisms of $\Ical$.
These automorphisms are a stand-ins for $\indone$ and $\indtwo$ in our actual applications (see \cite[\S4]{Dupuy2020a} for definitions).
\footnote{The only reason this subsection can't directly be applied is because the actual $\indone$ has some permutations among different tensor product factors of $\AA_{\Vu,p}^{\otimes j+1}$. 
The permutation of these factors does not appear in this example.}

This subsection gives a bound on the hull of the ``multiradial representation''\footnote{In \cite{Dupuy2020a} we used the notation $\Ubar$ for what we are now denoting $U$.}
$$U =\hull\left( \Aut_{\QQ_p}(L:\bigotimes_{i=1}^m \log(\OO_{K_i}^{\times})) \cdot (\OO_L^{\indthree(a)}) \right).$$
This region is a stand-in for the random measurable set $U^{(j)} \subset \AA_{\Vu,p}^{\otimes j+1}$ of the hull of the coarse multiradial representation of the Theta pilot region (see \cite[\S4]{Dupuy2020a} and the next section). 

We prove
\begin{equation}\label{E:in-disc}
	 \hull\left( \Aut(L:\Ical)  \cdot (\OO_L^{\indthree(a)}) \right) \subset D_L(0;R)
\end{equation} 
	where the radius $R$ is given by
\begin{equation}\label{E:radius}	
\ln(R) = - \lfloor \ord_p(a) + \Vert \diff \Vert_{\infty} - \Vert \diff \Vert_1 \rfloor \ln(p) + m\ln(c_p) + \sum_{i=1}^m \ln(e(K_i/\QQ_p)).
\end{equation}
The constant $c_p \in \RR$ and the vector $\diff \in \RR^m$ are given by 
	\begin{align*}
	c_p &= 1/\exp(1)\ln(p),\\
	\diff &= (\diff(K_1/\Q_p),\ldots,\diff(K_m/\QQ_p)).
	\end{align*}

To obtain this radius we compute. 
We have labeled each line in the computation below and give the justification for each step in the itemized environment following the displayed equations.
\begin{align}
&\Aut(L:\Ical)\left( a^{\NN} \cdot \left( \OO_L \cup \bigotimes_{i=1}^m \log(\OO_{K_i}^{\times}) \right) \right) \label{E:ind12} \\
&\subsetsim \Aut(L:\Ical)\left( a \cdot \left( \beta^{-1} \bigotimes_{i=1}^m \OO_{K_i} \cup \bigotimes_{i=1}^m \log(\OO_{K_i}^{\times}) \right) \right) \label{E:different} \\
&\subsetsim \Aut(L:\Ical)\left( a \beta^{-1} \Ical \right) \label{E:uppersemicompatibility} \\
&\subsetsim \Aut(L:\Ical) \left(p^{\lfloor \ord_p(a) - \ord_p(\beta) \rfloor} \Ical \right) \label{E:floors} \\
&= p^{\lfloor \ord_p(a) - \ord_p(\beta) \rfloor} \Ical \label{E:automorphisms} \\
&\subsetsim p^{\lfloor \ord_p(a) - \ord_p(\beta) \rfloor} D_L(0, \left( \frac{2p}{\exp(1)\ln(p)}\right)^m \prod_{i=1}^m e(K_i/\QQ_p) ) \label{E:log-bounds}
\end{align}

Since $\hull(D_L(0,R)) = D_L(0,R)$ for all radiuses $R>0$ we have 
 $$ \hull(U) \subset D_L(0, \frac{e(K_1/\Q_p) \cdots e(K_m/\Q_p)}{\vert 2 p \vert_p} p^{-\lfloor \ord_p(a) + \Vert \diff \Vert_{\infty} - \Vert \diff \Vert_1 \rfloor} ).$$

Here are the justifications for each step:
\begin{itemize}
	\item \eqref{E:in-disc} to \eqref{E:ind12}: Uses the main result concerning $\indthree$ in \cite{Dupuy2019b}
	\item \eqref{E:different}: Uses the theory of \S\ref{S:different}. 
	In particular there exist some $\beta = (\beta_1,\ldots,\beta_r) \in \bigoplus_{j=1}^r L_j=L$ such that $\beta \OO_L=\bigoplus_{j=1}^r \beta_j \OO_{L_j} \subset \bigotimes_{i=1}^r \OO_{K_i}$ where $\ord_p(\beta_j) = \Vert \diff \Vert_1 - \Vert \diff \Vert_{\infty}$ for each $j$ where $1\leq j\leq r$. 
	\item \eqref{E:uppersemicompatibility}: First we are using the ``upper semi-compatibility'' of $\Ical_K$, namely that for a finite extension $K$ of $\QQ_p$ we have $\log(\OO_K^{\times}), \OO_K \subset \Ical_K$.
	We use this fact tensor factor by tensor factor.
	Also, since the factors of $\beta$ all have large order, multiplication by $\beta^{-1}$ will increase the size of the hull.
	\item \eqref{E:floors}: We are using the general fact that if $A$ is a region and $\vert a_1 \vert_p < \vert a_2 \vert_p$ then $a_1 A \subsetsim a_2A$.
	\item \eqref{E:automorphisms}: This uses that $\Aut(L:\Ical)$ is by definition $\QQ_p$-linear and fixes $\Ical$ as a set. 
	\item \eqref{E:log-bounds}: We are applying the results of Lemma~\ref{L:log-bounds2}\ref{I:easy-log}.
\end{itemize}

\begin{remark}\label{R:ways-to-improve}
One can break this inclusion down in some alternative ways.
Here we highlight some areas for improvement.
We do not pursue these here. 
\begin{enumerate}
    \item Alternative to \eqref{E:ind12}: 
    For bounding $\Aut(L:\Ical)( a \cdot (\OO_L\cup \bigoplus_{i=1}^m \log(\OO_{K_i}) )$ one could write our an explicit $\ZZ_p$-basis for $a\cdot \OO_L$ and explicitly compute the action by $\Aut(L:\Ical)$. 
    \item Alternative to \eqref{E:different}: One could attempt to compute the index of $\bigotimes_{i=1}^m\OO_{K_i}$ in $\OO_L$. 
    This seems practical to do in specific toy cases but the size of the division fields may give in actual applications.
    It seems conceivable that other invariants around this inclusions can be used to write down more precise results. 
    \item \eqref{E:uppersemicompatibility}: One could attempt to find a smaller region here containing the two sets. 
    Are log-shells optimal? Maybe, maybe not.
    \item \label{I:ramification} \eqref{E:log-bounds}: We can go beyond the worst case scenario and make additional considerations about the ramification of the fields to improve bounds on $\Ical$. 
    This includes applying the second part of Lemma~\ref{L:log-bounds2} (which is applicable most of the times).
    In fact, for all but finitely many places of $v \in V(F_0)$ we have $\Ical_{\vu} = \OO_{K_{\vu}}$.
\end{enumerate}
\end{remark}

\subsection{Actual Scenario}
Fix initial theta data $(\Fbar/F, E_F, l, \Mu, \Vu, V^{\bad}_{\mod}, \epsu)$ built from the field of moduli.
In what follows $\AA_{\Vu}= \prod_{v \in V(F_0)} K_{\vu}$ denotes the ``fake adeles'' from \cite[\S3.1--\S3.3]{Dupuy2020a}. 
We seek to bound the sets $U^{(j)}_p \subset \AA_{\Vu,p}^{\otimes j+1}=:\LL_p^{(j)}$ where $U^{(j)}_p$ is of the form
$$U_p^{(j)} = \indtwo( \indone( \OO_{\LL_{p}^{(j)}}^{\indthree(\vec{a}_j)})).$$ 
Here we have made the following notational conventions:
\begin{align*}
\OO_{\LL_p^{(j)}} &= \bigoplus_{\vu \vert p} \OO_{\LL_{\vu}^{(j)}}, \\
\OO_{\LL_{\vu}^{(j)}} &= \Peel_{\vu}^j(\OO_{\LL_{\vu}^{(j)}}), \\
\OO_{\LL_p^{(j)}}^{\indthree(\vec{a}_j)} &= \bigoplus_{\vu \vert p} \OO_{\LL_{\vu}^{(j)}}^{\indthree(a_{j,v})},
\end{align*}
and we have let $\vec{a}_j = (a_{j,v})_{v\in V(F_0)}$ where 
$$ a_{j,v} = \begin{cases}
		q_{v}^{j^2/2l}, & v \mbox{ bad multiplicative} \\
		1, & \mbox{ else} \\
\end{cases}. $$
All of this of course depends on a choice of initial theta data. 
The peel decomposition $\Peel_{\vu}^j(\OO_{\LL_{\vu}^{(j)}})$ is described in \cite[\S3.3.7]{Dupuy2020a}.

Following Mochizuki we improve the toy bounds of \S\ref{S:worst-case-scenario} using Remark~\ref{R:ways-to-improve}\ref{I:ramification} (= considering what happens when ramification is small).
Let $\vec{\vu} = (\vu_0,\ldots,\vu_j) \in \Vu_p^{j+1}$.
We say that $\vec{\vu}$ is \emph{small} if every $e(\vu_i/p)$ is small for $0\leq i \leq j$.
Similarly we say that $\vec{\vu}$ is \emph{unramified} if $\vu_i$ is unramified for each $i$ where $0\leq i\leq j$.
We will also let $L_{\vec{\vu}} = K_{\vu_0} \otimes \cdots \otimes K_{\vu_j}$, where the tensor products are over $\QQ_p$.

\begin{lemma}\label{L:radius}
In the notation of this subsection, we have 
 $$\hull(U_p^{(j)}) \subset \prod_{\vec{\vu}\in \Vu_p^{j+1}} D_{L_{\vec{\vu}}}(0,R_{\vec{\vu}}) $$
where 
 $$
 \ln(R_{\vec{\vu}}) = 
 \begin{cases}
 0, &  \mbox{$\vec{\vu}$ unramified and $p\nmid \infty$} \\
 -\lfloor \ord_p(a_{j,v})-\ord_p(\beta_{\vec{\vu}}) \rfloor\ln(p), & \mbox{ $\vec{\vu}$ small and $p\nmid\infty$ }\\
 -\lfloor \ord_p(a_{j,v})-\ord_p(\beta_{\vec{\vu}}) \rfloor\ln(p) + (j+1)\ln(b_p) + \sum_{i=0}^j \ln(e(\vu_j/p)), & \mbox{ $p\mid \infty$ and $\vec{\vu}$ general } \\
 (j+1)\ln(\pi), & p\mid \infty
 \end{cases}  
 $$
\end{lemma}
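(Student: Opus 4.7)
The plan is to mimic the four-line computation of \S\ref{S:worst-case-scenario} tensor-factor by tensor-factor, then case-split by refining the final log-shell estimate. Since $\LL^{(j)}_p = \bigoplus_{\vec{\vu} \in \Vu_p^{j+1}} L_{\vec{\vu}}$ under the natural decomposition, and the peel decomposition respects this splitting, it suffices to bound $U^{(j)}_p \cap L_{\vec{\vu}}$ inside $L_{\vec{\vu}}$ for each fixed tuple $\vec{\vu}=(\vu_0,\dots,\vu_j)$. The indeterminacies $\indone, \indtwo, \indthree$ then act within each $L_{\vec{\vu}}$-factor (with $\indone$ additionally permuting tensor slots, which does not affect the radii since the radius formula will turn out to be symmetric in the entries), so I am in the situation of \S\ref{S:worst-case-scenario} with $m=j+1$, $K_i = K_{\vu_i}$, and $a = a_{j,v}$.

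The general non-archimedean case ($\vec{\vu}$ general, $p\nmid\infty$) is then exactly the chain of inclusions \eqref{E:ind12}--\eqref{E:log-bounds}: applying \cite[\S4]{Dupuy2020a} to absorb $\indthree$ into a union with a log-shell tensor, using Theorem~\ref{T:descent} to produce $\beta_{\vec{\vu}}$ with $\ord_p(\beta_{\vec{\vu}}) = \|\diff\|_1 - \|\diff\|_\infty$, invoking upper semi-compatibility ($\log(\OO_K^\times), \OO_K \subset \Ical_K$) to collapse both summands into $\Ical_{\vec{\vu}}:=\bigotimes_i \Ical_{\vu_i}$, and finally applying Lemma~\ref{L:log-bounds2}\eqref{I:easy-log} factor-by-factor to get a polydisc of the stated radius. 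This gives the third line of the case list.

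For the small case, I replace the last step by Lemma~\ref{L:log-bounds2}(2): when $e(\vu_i/p)<p-1$ the log-shell collapses to $\Ical_{\vu_i} = \frac{1}{2p}\pi_{\vu_i}\OO_{K_{\vu_i}}$. The ratio $\vert \frac{1}{2p}\pi_{\vu_i}\vert_p$ can be rewritten as $p^{-1+1/e(\vu_i/p)}$ and its contribution is absorbed into the floor, eliminating both the $(j+1)\ln(b_p)$ and the $\sum\ln(e(\vu_i/p))$ terms. For the fully unramified case I further use that tensor products of unramified rings of integers are unramified, giving $\OO_{L_{\vec{\vu}}} = \bigotimes_i \OO_{K_{\vu_i}}$ so $\beta_{\vec{\vu}}$ can be taken to be a unit; the Serre--Tate criterion (end of \S\ref{S:geometry-and-galois}) applied to the $l$-division field in the initial-theta-data setup guarantees $a_{j,v}$ is also a unit at unramified $\vu_i$, so $\ord_p(a_{j,v}) - \ord_p(\beta_{\vec{\vu}}) = 0$ and the entire expression collapses to $\ln(R_{\vec{\vu}})=0$. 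The archimedean case is a direct quotation of Claim~\ref{L:arch-log-bounds}.

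The main obstacle is bookkeeping around the boundary between ramification regimes: I need to verify that replacing the worst-case log-shell bound with the small/unramified bound is legal inside the chain \eqref{E:ind12}--\eqref{E:log-bounds} without breaking the earlier steps (in particular that the $\QQ_p$-lattice automorphism step \eqref{E:automorphisms} still applies to the refined $\Ical_{\vec{\vu}}$), and to confirm that the different exponent $\|\diff\|_1-\|\diff\|_\infty$ does not reappear in the unramified case because all $\diff(K_{\vu_i}/\QQ_p)$ vanish. Everything else is a mechanical substitution into the template of \S\ref{S:worst-case-scenario}.
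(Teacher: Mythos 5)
Your proposal matches the paper's proof strategy essentially point-for-point: decompose $\LL_p^{(j)}$ into factors $L_{\vec{\vu}}$, run the chain of inclusions from \S\ref{S:worst-case-scenario} on each factor, and then refine by case on the ramification of $\vec{\vu}$ — N\'eron--Ogg--Shafarevich to kill $a_{j,v}$ and $\beta_{\vec{\vu}}$ in the unramified case, Lemma~\ref{L:log-bounds2}(2) in the small case, and Claim~\ref{L:arch-log-bounds} at the archimedean place. In fact your treatment of the unramified case is slightly more complete than the paper's own sketch, which only invokes N\'eron--Ogg--Shafarevich to handle $a_{j,v_j}$ and says nothing about why $\beta_{\vec{\vu}}$ and the log-shell both become trivial; your observation that tensor products of unramified rings of integers are unramified (so $T = \OO_L$ and $\beta_{\vec{\vu}}$ is a unit) and that $\Ical_{K} = \OO_K$ when $e=1$ supplies that missing link.

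One quibble in the small case: you write $\bigl\lvert \tfrac{1}{2p}\pi_{\vu_i}\bigr\rvert_p = p^{-1+1/e(\vu_i/p)}$, but for odd $p$ one has $\ord_p\bigl(\tfrac{1}{2p}\pi_{\vu_i}\bigr) = -1 + 1/e(\vu_i/p)$ and hence $\bigl\lvert \tfrac{1}{2p}\pi_{\vu_i}\bigr\rvert_p = p^{1-1/e(\vu_i/p)} \ge 1$; i.e., the factor \emph{enlarges} rather than shrinks the region when $e(\vu_i/p) > 1$. Consequently the claim that this contribution is ``absorbed into the floor'' and eliminates both $(j+1)\ln(b_p)$ and $\sum\ln e(\vu_i/p)$ needs a more careful justification than ``mechanical substitution'': the floor in \eqref{E:floors} is applied to $\ord_p(a_{j,v}) - \ord_p(\beta_{\vec{\vu}})$, not to the log-shell radius, so the residual $p^{(j+1)-\sum_i 1/e_i}$ factor does not obviously disappear. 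The paper's own one-line proof (``we apply the bounds from Lemma~\ref{L:log-bounds2}'') elides the same bookkeeping, so this is a shared looseness rather than a flaw particular to your argument — but you have flagged it as the ``main obstacle'' without actually resolving it, and the resolution is not purely mechanical.
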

\begin{proof}
	There are three points of departure from the computation in \S\ref{S:worst-case-scenario}: the specialization of $\beta$ and $a$, improvement of log-bounds, and the inclusion of the archimedean place. 
	In the case that $\vec{\vu}$ is unramified we know that $a_{j,v_j}=1$ by N\'{e}ron-Ogg-Shafarevich.
	In the case that $\vec{\vu}$ is small, we apply the bounds from Lemma~\ref{S:log-bounds}.
	In the archimedean case we apply Lemma~\ref{L:arch-log-bounds}.
\end{proof}

%%%%%%%%%%%%%%%%%%%%%%%%%%%%%%%%%%
\section{Probabilistic Versions of the Mochizuki and Szpiro Inequalities}\label{S:prob-szpiro-sec}
%%%%%%%%%%%%%%%%%%%%%%%%%%%%%%%%%%
Throughout this section we fix initial theta data $(\Fbar/F, E_F, l, \Mu, \Vu, V^{\bad}_{\mod}, \epsu)$ built from the field of moduli $F_0 = \QQ(j_E)$. 

\subsection{Probability Spaces}
Fix a rational prime $p$. 
Recall that, as in the introduction, we give $\coprod_{j=1}^{(l-1)/2} V(F_0)^{j+1}_p$ the structure of a finite probability space where 
$(v_0,v_1,\ldots,v_j) \in \coprod_{j=1}^{(l-1)/2} V(F_0)^{j+1}_p$ is assigned probability 
 $$ \Pr( (v_0,v_1,\ldots,v_j) ) = \frac{2}{l-1}\frac{[K_{\vu_0}:\QQ_p][K_{\vu_1}:\QQ_p]\cdots [K_{\vu_j}:\QQ_p] }{[F_0:\QQ]^{j+1}}.$$
The space $\coprod_{j=1}^{(l-1)/2} V(F_0)^{j+1}_p$ can be viewed as a uniform independent disjoint union of probability spaces $V(F_0)^{j+1}$. 
For a random variable $X(\vec{v})$ that depends on $\vec{v} = (v_0,v_1,\ldots,v_j) \in \coprod_{j=1}^{(l-1)/2} V(F_0)^{j+1}$ we can view the expectation of $X$ as an ``iterated expectation'', by first computing the expectation as we vary over $(v_0,\ldots,v_j) \in V(F_0)^{j+1}_p$ for a fixed $j$ and then computing the expection of these expectations as we vary uniformly over $j$.  
In what follows $\EE_p^2$ will denote this iterated expectation:
$$\EE_p^2(X(\vec{v})) = \EE( \EE( X(\vec{v}): \vec{v} \in V(F_0)^{j+1}): 1\leq j\leq (l-1)/2 ).$$ 
Note that the colons here do not denote conditional probabilities.

\subsection{Jensen's Inequality}
Jensen's inequality states that for a convex function $g(x)$ and a random variable $X$ that 
$$ g(\E(X)) \leq \E(g(X)). $$
The inequality goes the other way for concave functions and one can test for convexity using the second derivative test: a function of a real variable $g(x)$ is convex if and only $g''(x)\geq 0$. 
In particular $g(x) = \exp(x)$ is a convex function and $g(x) = \ln(x)$ in concave. 
This allows us to say that 
\begin{equation}\label{E:jensen-corollary}
\exp(\E(\ln(X))) \leq \E(X) \leq \ln( \E(\exp(X))).
\end{equation}

\subsection{Random Variables Pulled-back from a Projection}
Let $S$ be a discrete probability space.
Let $(X_1,\ldots,X_n)$ be a random variable on $S^n$. 
If $f(X_1,\ldots,X_n)$ only depends on $X_n$ (i.e. $f(X_1,\ldots,X_n) = g(X_n)$ for some function of a single variable $g$) then the expected value of $f(X_1,\ldots,X_n)$ can be computed by just varying over what the function depends on.
In symbols: 
$$\E( f(X_1,\ldots,X_n) ) = \E(g(X)).$$
It is also elementary to check that 
 $$\E(g(X_1)g(X_2)\cdots g(X_n)) = \E(g(X))^n.$$
 
\subsection{Measures} For $L$ a direct sum of $p$-adic fields, we will often make use of the formula 
$$\logmubar_{L}(D_L(0,R)) \leq \ln(R).$$
Here, for a finite dimensional vector space $V$ and a measurable set $A \subset V$ we define $\logmubar_V(A) = \ln(\mu_V(A))/\dim(V).$

\subsection{Probabilistic Mochizuki}
Using the Probabilistic formalism developed in \cite[\S 3.1.2]{Dupuy2020a}, we can state \cite[Corollary 3.12]{IUT3} in the following way:
\begin{theorem}[Tautological Probabilistic Inequality]\label{L:tautological}
	For $\vec{v} \in \Vu_p^{j+1}$ let 
	\begin{equation}\label{E:hull-radius}
	R_{\vec{\vu}}^{\circ}= \sup \lbrace R \in \RR: U_{\vec{\vu}} \subset D_{L_{\vec{\vu}}}(0,R) \rbrace,
	\end{equation}
	here $U_{\vec{\vu}}$ is the component of the multiradial representation in $L_{\vec{\vu}}$.
	Assuming \cite[Corollary 3.12]{IUT3} we have 
	\begin{equation}\label{E:tautological}
	  - \deghatu_{F_0}(P_q) \leq \sum_{p \in V(\QQ)} \EE_p^2( \ln R_{\vec{\vu}}^{\circ}). 
	\end{equation}
\end{theorem}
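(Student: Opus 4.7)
The label ``tautological'' is indicative: the statement is a direct rewriting of the volumetric form of \cite[Corollary 3.12]{IUT3} recorded in \cite[\S3.1.2]{Dupuy2020a}, now phrased in the probabilistic language of the previous subsection. The single non-trivial input is that volumetric formulation, namely the assertion that for the multiradial region $U = (U_p)_{p \in V(\QQ)}$ with $U_p \subset \LL_p := \bigoplus_{j=1}^{(l-1)/2} \AA_{\Vu,p}^{\otimes j+1}$,
\begin{equation*}
-\deghatu_{F_0}(P_q) \;\leq\; \sum_{p \in V(\QQ)} \logmubar_{\LL_p}(\hull(U_p)).
\end{equation*}
Granting this, the plan is simply to rewrite the right-hand side as $\sum_p \EE_p^2(\ln R^\circ_{\vec\vu})$.

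The first step is to decompose the hull using the Chinese Remainder structure of \S\ref{S:chinese}. The factor $\AA_{\Vu,p}^{\otimes j+1}$ splits as the direct sum $\bigoplus_{\vec\vu \in \Vu_p^{j+1}} L_{\vec\vu}$, and since containment in a polydisc respects direct sums, the definition \eqref{E:hull-radius} of $R^\circ_{\vec\vu}$ as the smallest radius of a bounding polydisc gives
\begin{equation*}
\hull(U_p) \;=\; \bigoplus_{j=1}^{(l-1)/2} \bigoplus_{\vec\vu \in \Vu_p^{j+1}} D_{L_{\vec\vu}}(0, R^\circ_{\vec\vu}).
\end{equation*}
The identity $\logmubar_L(D_L(0,R)) = \ln R$ recalled at the start of this section then reduces $\logmubar_{\LL_p}(\hull U_p)$ to an average of the $\ln R^\circ_{\vec\vu}$. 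By additivity of $\logmubar$ over direct sums, each $\vec\vu$-summand enters with weight proportional to $\dim_{\QQ_p} L_{\vec\vu} = \prod_i [K_{\vu_i}:\QQ_p]$ inside the fixed-$j$ block, while the blocks indexed by $j$ enter uniformly with weight $2/(l-1)$, coming from the Frobenioid-theoretic normalization of \cite{Dupuy2020a}. These are precisely the two layers of the probability $\Pr(\vec\vu) = \tfrac{2}{l-1} \prod_i [K_{\vu_i}:\QQ_p]/[F_0:\QQ]^{j+1}$, so the resulting weighted sum is $\EE_p^2(\ln R^\circ_{\vec\vu})$.

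The main obstacle is checking normalization conventions rather than establishing new content: one must verify that the uniform (not dimension-weighted) averaging over $j \in \{1,\ldots,(l-1)/2\}$ built into $\EE_p^2$ agrees with the convention used to define $\logmubar_{\LL_p}$ on $\bigoplus_j \AA_{\Vu,p}^{\otimes j+1}$ in the volumetric inequality, and that the sum-of-dimensions identity implicit in the denominator $[F_0:\QQ]^{j+1}$ of $\Pr$ is compatible with the bijection $\Vu_p \to V(F_0)_p$ and with the choice of local fields $K_{\vu}$ used in the formation of $U_p$. Modulo this bookkeeping, the claimed inequality is literally the volumetric form of Mochizuki's Corollary 3.12 with the sum-average split written as a probabilistic expectation.
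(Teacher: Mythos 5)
The paper does not actually prove this theorem: it is stated and labelled ``tautological'' precisely because it is presented as a restatement of \cite[Corollary 3.12]{IUT3} in the probabilistic language of \cite[\S3.1.2]{Dupuy2020a}, and the text moves on immediately to estimating $R^{\circ}_{\vec\vu}$ without supplying a derivation. Your proposal is therefore not in competition with anything in the paper; it is an attempt to unpack what the authors treat as definitional. The overall plan --- start from the volumetric form of Corollary 3.12, split $\AA_{\Vu,p}^{\otimes j+1}$ over the $\vec\vu$-factors via the Chinese Remainder decomposition of \S\ref{S:chinese}, bound $\logmubar$ of each factor by $\ln R^{\circ}_{\vec\vu}$, and recognize the dimension-weighted average as $\EE_p^2$ --- is the right unfolding and matches how the quantities are used later.

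Two of your intermediate steps are written as equalities where only inequalities hold, though both point in the favorable direction. First, $\hull(U_p)$ is the smallest polydisc containing $U_p$ with \emph{per-CRT-factor} radii, whereas $D_{L_{\vec\vu}}(0,R^{\circ}_{\vec\vu})$ has a \emph{uniform} radius across the $\psi$-components of $L_{\vec\vu}$; so your displayed identity should be a containment $\hull(U_p) \subset \bigoplus_{j}\bigoplus_{\vec\vu} D_{L_{\vec\vu}}(0,R^{\circ}_{\vec\vu})$, not an equality. Second, the paper states $\logmubar_{L}(D_L(0,R)) \leq \ln(R)$, not equality (equality fails when $R$ is not an attainable valuation); again, the direction is what the chain needs, but the identity as you wrote it is not what the paper asserts. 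Finally, the ``bookkeeping'' you flag is genuinely non-trivial: the probability $\Pr(\vec v)$ in \S7.1 is expressed with $[K_{\vu_i}:\QQ_p]$ in the numerator while the introduction's $\Pr$ on $V(F_0)_p$ uses $[F_{0,v}:\QQ_p]$, and reconciling these with the dimension count $\dim_{\QQ_p}\AA_{\Vu,p} = [F_0:\QQ]$ implicit in $\logmubar$ requires the precise construction of the fake adeles in \cite{Dupuy2020a}, which neither you nor the paper at hand spells out. Your proposal correctly locates this as the main thing to verify; it would strengthen the argument to pin down that normalization rather than deferring it.
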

The radius $R_{\vec{\vu}}$ in Lemma~\ref{L:radius} gives an estimate on $R_{\vec{\vu}}^{\circ}$ in \eqref{E:tautological} giving
\begin{equation}\label{E:estimated}
- \deghatu_{F_0}(P_q) \leq \sum_{p \in V(\QQ)} \EE_p^2( \ln R_{\vec{\vu}}). 
\end{equation}
The rest of this subsection is devoted to estimating $\ln R_{\vec{\vu}}$ (so we will be deriving, in effect, will be estimates of estimates).
\begin{remark}
The computation of $R_{\vec{\vu}}$ is not optimal. It can be improved upon by readers in general or in special cases.
It is unclear how far off $R_{\vec{\vu}}^{\circ}$ is from $R_{\vec{\vu}}$. 
It would be interesting to develop a table of $R_{\vec{\vu}}^{\circ}$ in some numerical examples (if the computations involving the division fields are not prohibitively hard).
%Also, we remark that $R_{\vec{\vu}}^{\circ}$ in \label{E:hull-radius} can be replaced by a polyradius. 
%It is unclear how much this true polyradius 
\end{remark}

The readers should compare what follows to \cite[Proof of Theorem 1.10]{IUT4}. 
Fix $p \in V(\QQ)$.
We have  
\begin{equation}
\EE_p^2( \ln R_{\vec{\vu}}) \leq \Irm_p + \IIrm_p + \IIIrm_p + \IVrm_p + \Vrm_p
\end{equation}
where 
\begin{align*}
\Irm_p &=  -\EE_p^2\left (\ord_p(q_{v_j}^{j^2/2l}  ) \right)\ln(p) \\
\IIrm_p &=  \EE_p^2\left (  \Vert \diff_{\vec{\vu}} \Vert_1 - \Vert \diff_{\vec{\vu}} \Vert_{\infty} \right)\ln(p) \\
\IIIrm_p &= \EE_p^2\left( 1_{\ram}(\vec{\vu}) )\right) \\
\IVrm_p &= \EE_p^2( (j+1) \ln(b_p) 1_{\ram}(\vec{\vu}) )\\
\Vrm_p &= \EE_p^2\left(  \sum_{i=0}^j \ln(e(\vu_j/p)) \right)
\end{align*}
In the above formulas for $\IIIrm_p$ and $\IVrm_p$ the function $1_{\ram}(\vec{\vu})$ is the function which is $0$ if $\vec{\vu}$ is unramified and $1$ if $\vec{\vu}$ is ramified. \footnote{We say a tuple $(\vu_0,\ldots,\vu_j)$ is \emph{ramified} if there exists some $i$ with $0 \leq i \leq j$ such that $e(\vu_i/p)>1$.
If a tuple is not ramified it is called \emph{unramified}. }
We will denote the sums over $p$ of $\Irm_p$,$\IIrm_p$, $\IIIrm_p$, $\IVrm_p$, and $\Vrm_p$ by $\Irm$, $\IIrm,$ $\IIIrm$, $\IVrm$, and $\Vrm$ respectively. 

\begin{remark}
	At this stage we can already see Mochizuki's inequality as stated in \cite[\S2.12]{Fesenko2015}: we combine inequalities $-\deghatu(P_q) \leq \nlogn_{\LL}(\hull(U))$
	and $\nlogn_{\LL}(\hull(U)) \leq a(l)-b(l)\deghatu(P_q)$ to get  $(b(l)-1)\deghatu(P_q) \leq a(l)$ which gives 
	$$ \deghatu(P_q) \leq \frac{a(l)}{b(l)-1}.$$
    \cite[Claim 5]{SS} follows this style. 
	Further approximate computations can be found at \cite[slide 17]{Hoshi2017} (adapted in \cite[\S 1.3]{SS}). 
\end{remark}

%-------------------------------------------------------
\subsection{Computation of $\Irm_p$}
%-------------------------------------------------------
We have
\begin{align*}
\EE( \ord_p( \quu_{v_j}^{j^2} ): \vec{v} \in V(F_0)^{j+1} )  &= \EE(\ord_p(\quu_{v}^{j^2}): v \in V(F_0) )\\
&=\sum_{v\vert p} \frac{[F_{0,v}:\QQ_p]}{[F_0:\QQ_p]} \ord_p(\quu_{v}^{j^2}) \\
&= \frac{1}{[F_0:\QQ]} \sum_{v \in V(F_0)_p} e(v/p) \ord_p( \quu_{v}^{j^2}) f(v/p) \ln(p)\\
&= \deghatu( \sum_{v \in V(F_0)_p \ \bad } \ord_v( \quu_v^{j^2}) [v] ).
\end{align*}
Hence
\begin{equation}\label{E:formulaI}
\Irm_p =\sum_p \EE_p^2\left(\ord_p( \quu_{v_j}^{j^2})  \right) = \sum_p \EE( \deghatu( \sum_{v \in V(F_0)_p \ \bad } \ord_v( \quu_v^{j^2}) [v]) = \deghatu_{\lgp,F_0}(P_{\Theta}).
\end{equation}

%------------------------------------------------------
\subsection{Computation of $\IIrm_p$}
%------------------------------------------------------
In what follows we make use of the \emph{average different order of $\Vu$ over $p$} is defined to be the quantity
\begin{equation}
\overline{\diff}_{p}:= \log_p(\E(p^{\diff(\vu/p)})).
\end{equation}
We will prove 
\begin{equation}\label{E:formulaII}
	\IIrm_p \leq \frac{(l+1)}{4} \overline{\diff}_{p}\ln(p).
\end{equation}
Note that if we define the \emph{average different} for $\Vu$ by $ \overline{\Diff}(\Vu/\QQ) = \prod_p p^{\overline{\diff}_p} $
we get 
\begin{equation}
\IIrm \leq \ln \Diffbar(\Vu/\QQ).
\end{equation}

Before establishing \eqref{E:formulaII} it is convenient to make the following Lemma.
\begin{lemma}\label{L:part2-lemmas}
	For $\vec{v}\in V(F_0)_p^{n}$ let 
	$$ \diff_{\vec{\vu}} = \diff_{(\vu_1,\ldots,\vu_n)} = (\diff(\vu_1/p),\ldots,\diff(\vu_n/p)). $$ 
	For $\vec{v} \in V(F_0)_p^n$ following inequalities hold.
	\begin{enumerate}
		\item \label{I:averages} $\Vert\diff_{\vec{\vu}}\Vert_1 - \Vert \diff_{\vec{\vu}} \Vert_{\infty} \leq \frac{n-1}{n} \Vert \diff_{\vec{\vu}} \Vert_1.$
		\item \label{I:application-of-jensen} $\E(\diff_{(\vu_1,\ldots,\vu_n)}) \leq n \overline{\diff}_{p}.$ 
	\end{enumerate}
	The subscripts $1$ and $\infty$ denote the usual $l^1$ and $l^{\infty}$ norms for vectors in $\RR^n$.

\end{lemma}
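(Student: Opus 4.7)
The plan is to prove both inequalities by short direct arguments; neither requires any of the deeper machinery of the paper.

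For part (\ref{I:averages}), the key observation is that every coordinate of $\diff_{\vec{\vu}}$ is non-negative, since local different exponents are non-negative. For any vector in $\RR_{\geq 0}^n$, the maximum coordinate is at least the average: $\Vert \diff_{\vec{\vu}}\Vert_\infty \geq \tfrac{1}{n}\Vert \diff_{\vec{\vu}}\Vert_1$. Rearranging yields
$$\Vert \diff_{\vec{\vu}}\Vert_1 - \Vert \diff_{\vec{\vu}}\Vert_\infty \leq \Vert \diff_{\vec{\vu}}\Vert_1 - \tfrac{1}{n}\Vert \diff_{\vec{\vu}}\Vert_1 = \tfrac{n-1}{n}\Vert \diff_{\vec{\vu}}\Vert_1,$$
which is the asserted bound.

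For part (\ref{I:application-of-jensen}), I read $\E(\diff_{(\vu_1,\ldots,\vu_n)})$ as $\E(\Vert \diff_{(\vu_1,\ldots,\vu_n)}\Vert_1)$; this is the only interpretation that is consistent with the way the lemma is combined with (\ref{I:averages}) to bound $\IIrm_p$ in the derivation of \eqref{E:formulaII}. Since the probability measure on $V(F_0)_p^n$ is a product measure, the coordinates $\vu_i$ are i.i.d.\ copies of a single $V(F_0)_p$-valued random variable $\vu$. Linearity of expectation therefore gives
$$\E(\Vert \diff_{(\vu_1,\ldots,\vu_n)}\Vert_1) = \sum_{i=1}^n \E(\diff(\vu_i/p)) = n\,\E(\diff(\vu/p)),$$
reducing the claim to the one-variable bound $\E(\diff(\vu/p)) \leq \overline{\diff}_p$. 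For this I would invoke Jensen's inequality \eqref{E:jensen-corollary}: concavity of $\log_p$ on the positive reals yields
$$\E(\diff(\vu/p)) = \E(\log_p(p^{\diff(\vu/p)})) \leq \log_p(\E(p^{\diff(\vu/p)})) = \overline{\diff}_p,$$
the last equality being the definition of $\overline{\diff}_p$. Combining with the previous display gives (\ref{I:application-of-jensen}).

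I do not foresee any real obstacle in either part. The only point worth flagging is the notational convention in (\ref{I:application-of-jensen}): the displayed inequality only makes sense after one agrees that $\E$ of a vector-valued random variable is being interpreted as $\E$ of its $\ell^1$-norm (equivalently, of the sum of its coordinates). Once this convention is made explicit the proof is two lines: Jensen reduces things to one coordinate, and independence of the coordinates handles the product structure.
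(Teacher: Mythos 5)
Your proof is correct, and part (\ref{I:averages}) is essentially the paper's argument verbatim: nonnegativity of the different exponents gives $\Vert \diff_{\vec{\vu}}\Vert_\infty \geq \tfrac{1}{n}\Vert \diff_{\vec{\vu}}\Vert_1$, and rearranging finishes. For part (\ref{I:application-of-jensen}) your route is a mild but real reorganization of the paper's. The paper applies Jensen once to the entire sum, $\E\bigl(\sum_i \diff(\vu_i/p)\bigr) \leq \log_p \E\bigl(\prod_i p^{\diff(\vu_i/p)}\bigr)$, and then factors the expectation of the product using independence of the coordinates to get $\E\bigl(p^{\diff(\vu/p)}\bigr)^n$. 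You instead invoke linearity of expectation first, $\E\bigl(\sum_i \diff(\vu_i/p)\bigr) = n\,\E(\diff(\vu/p))$, and apply Jensen to a single coordinate. Both deliver $n\overline{\diff}_p$. Your ordering is marginally cleaner in that it only requires the coordinates to be identically distributed, whereas the paper's factoring step uses the full product (independence) structure of the measure on $V(F_0)_p^n$; in this setting both hold, so the difference is cosmetic, but your version is the more economical one. Your reading of the left-hand side as $\E\bigl(\Vert\diff_{\vec{\vu}}\Vert_1\bigr)$ also matches what the paper intends (their proof opens by rewriting it as an expectation of a sum), so the notational flag you raise is resolved correctly.
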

\begin{proof}
\begin{enumerate}
	\item 	The proof is a fortiori. 
	For positive real numbers $a_1,\ldots, a_n$
	we have 
	\begin{align*}
	n(\sum_{i=1}^n a_i - \max_{1\leq i \leq n} a_i) =&n(\sum_{i=1}^n a_i) - n\max_{1\leq i \leq n} a_i)\\
	\leq& n(\sum_{i=1}^n a_i) - \sum_{i=1}^n a_i \\
	=& (n-1) \sum_{i=1}^n a_i.
	\end{align*} 
	This proves $\Vert \vec{a} \Vert_{1} - \Vert \vec{a} \Vert_{\infty} \leq \frac{n-1}{n} \Vert \vec{a} \Vert_1,$
	if we let $\vec{a} = (a_1,\ldots,a_n)$. 
	\item We will apply Jensen's inequality, to turn an expectation of a sum $\E(\diff(\vu_1/p) + \cdots + \diff(\vu_n/p))$
	into (the log of) an expectation of a product $ \E(p^{\diff(\vu_1/p)} \cdots p^{\diff(\vu_n/p)}).$
	Now that this is a product of random variables the expectation factors, 
	namely, $\E(p^{\diff(\vu_1/p)} \cdots p^{\diff(\vu_n/p)}) = \E(p^{\diff(\vu/p)})^n. $
	This shows $ \E(\diff_{(\vu_1,\ldots,\vu_n)}) \leq n\log_p \E(p^{\diff(\vu/p)})$ which is our desired result. 
\end{enumerate}
\end{proof}

We now prove our desired formulas:
	\begin{align*}
	\E(\Vert \diff_{(\vu_0,\ldots,\vu_j)} \Vert_1 - \Vert \diff_{(\vu_0,\ldots,\vu_j)}\Vert_{\infty}) &\leq \frac{j}{j+1}\E( \Vert \diff_{(\vu_0,\ldots,\vu_j)} \Vert_1)\\
	&\leq \frac{j}{j+1}( (j+1) \overline{\diff}_p ) \\
	&= j \overline{\diff}_p.
	\end{align*}
	The first line follows from Lemma~\ref{L:part2-lemmas}\ref{I:averages} and the second line follows from Lemma~\ref{L:part2-lemmas}\ref{I:application-of-jensen} (which as an application of Jensen's inequality together with the way expectations of products of random variables behave). 
	It remains to compute the expectation of these over $\lbrace 1,\ldots,j \rbrace$. 
	We have
	$$\E^2( \Vert \diff_{(\vu_0,\ldots,\vu_j)}\Vert_1 - \Vert \diff_{(\vu_0,\ldots,\vu_j)}\Vert_{\infty} ) \leq \E( j \overline{\diff_p})\\
	=\left(\frac{2}{l-1} \sum_{j=1}^{(l-1)/2} j \right)\overline{\diff}_p = \frac{l+1}{4}\overline{\diff}_p,$$
	which gives our result. 

%----------------------------------------------
\subsection{Computation of $\IIIrm_p$}\label{S:probability-of-ramification}
%----------------------------------------------
In what follows we will make use of the probability of a place $\vu \in \Vu_p$ to be unramified.
In formula this probability is defined by 
\begin{equation}
\PP_{\unr,p} = 1-\EE( 1_{\ram}(\vu): v \in V(F_0)_p).
\end{equation}
Also recall that since a tuple $\vec{\vu} = (\vu_0,\ldots,\vu_j) \in \Vu_p^{j+1}$ is unramified if and only if each $\vu_i$ is unramified for $0 \leq i \leq j$ this means that $\EE(1_{\ram}(\vu_0,\ldots,\vu_j)) = 1 - \PP_{\unr,p}^{j+1}.$
This then gives 
 $$ \IIIrm_p = \EE_p^2(1_{\ram}(\vu_0,\ldots,\vu_j)) = \frac{2}{l-1}\sum_{j=1}^{(l-1)/2}\left( 1-\PP_{\unr,p}^{j+1} \right)=1-\frac{2}{l-1}\sum_{j=1}^{(l-1)/2} \PP_{\unr,p}^{j+1}.$$
As the smallest of the $\PP_{\unr,p}^{j+1}$ is $\PP_{\unr,p}^{(l+1)/2}$ we get the following inequality:
\begin{equation}\label{E:formulaIII}
 \IIIrm_p \leq 1-\PP_{\unr,p}^{(l+1)/2}.
\end{equation}

%----------------------------------------------
\subsection{Computation of $\IVrm_p$}
%----------------------------------------------
We will prove 
 \begin{equation}\label{E:formulaIV}
 \IVrm_p \leq \frac{l+5}{4}\ln(b_p)\left( 1- \PP_{\unr,p}^{l+1/2} \right).
 \end{equation}
%\taylor{check the average of $j+1$ again.}
Using identical reasoning to \S\ref{S:probability-of-ramification} the first average is $\EE((j+1)\ln(b_p)1_{\ram}(\vec{\vu})) = (j+1) \ln(b_p)(1-\PP_{\unr,p}^{j+1})$. 
This gives
\begin{align*}
\IVrm_p &= \ln(b_p)\left( \frac{2}{l-1} \sum_{j=1}^{(l-1)/2} (j+1)(1 - \PP_{\unr,p}^{j+1}) \right) \\
&\leq \ln(b_p)( 1-\PP_{\unr,p}^{l+1/2})\left(\frac{l+5}{4} \right).
\end{align*}

%------------------------------------------
\subsection{Computation of $\Vrm_p$}
%------------------------------------------
It will be convenient to define $\overline{e}_p$, the \emph{average ramification index} of $\Vu$ over $p$. 
In notation it is defined by
\begin{equation}
 \overline{e}_p := \EE( e(\vu/p): v \in V(F_0)_p ).
\end{equation}
We now compute $\Vrm_p$: we have
\begin{align*}
\EE(\sum_{i=0}^j \ln(e(\vu_i/p))) &= \EE( \ln( \prod_{i=0}^k e(\vu_i/p))) \\
&\leq \ln( \EE\left(  \prod_{i=0}^j e(\vu_i/p)\right))\\
&\leq \ln( \EE(e(\vu/p))^{j+1}) = (j+1)\ln(\overline{e}_p)
\end{align*}
The first to second line is an application of Jensen's inequality and the second to third line uses that, for independent random variables, the expectation of the product is the product of the expectations.
We then can compute the second expectation by computing the uniform average of $j+1$ over $\lbrace 1, \ldots, (l-1)/2\rbrace$.
This gives
\begin{equation}\label{E:formulaV}
\IVrm_p \leq \frac{l+5}{4}\ln(\overline{e}_p).
\end{equation}
%\taylor{Check the average of $j+1$ again. This is the coeff.}

%-------------------------------------------------
\subsection{Archimedean Contribution}\label{S:arch-contr}
%-------------------------------------------------
Here we only have to deal with log-shells. 
Applying Lemma~\ref{L:arch-log-bounds} we have 
$$\E_{\infty}^2 = \frac{2}{l-1} \sum_{j=1}^{(l-1)/2} (j+1) \ln(\pi)  = \frac{l+5}{4}\ln(\pi).$$

%-------------------------------------------------
\subsection{Probabilistic Szpiro}\label{S:second-probabilistic}
%-------------------------------------------------
We now combine the results of the previous subsections. 
The verification of the following identities requires some careful bookkeeping. 

\begin{theorem}[Probabilistic Szpiro]\label{T:probabilistic2}
Assume \cite[Corollary 3.12]{IUT3} and Claim \ref{L:arch-log-bounds}.
Then for any elliptic curve $E/F$ in initial theta data $(\Fbar/F, E_F, l, \Mu, \Vu, V^{\bad}_{\mod}, \epsu)$ built over the field of moduli we have  
	\begin{equation}	
	\frac{1}{6+\varepsilon_l}\frac{\ln \vert \Delta^{\min}_{E/F} \vert }{[F:\QQ]} \leq \ln \overline{\Diff}(\Vu) + \sum_{p} \ln(\overline{e}_p)+A_{l,\Vu}
	\end{equation}
	where 
	 $$A_{l,\Vu}= \ln(\pi) + \sum_p (1 - \PP_{\unr,p}^{(l+1)/2})\left (\ln(b_p)+\frac{5}{l+4} \right ),$$
	and $b_p = 1/\exp(1)\ln(p)$, and $\varepsilon_l = 24(l+3)/(l^2+l-12)$.
\end{theorem}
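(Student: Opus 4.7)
The plan is to insert the term-by-term computations of \S\ref{S:prob-szpiro-sec} into the Tautological Probabilistic Inequality~\eqref{E:tautological} and then solve the resulting linear inequality for $\deghatu_{F_0}(P_q)$. The key algebraic maneuver is that the theta-pilot contribution $\Irm$ is proportional to $\deghatu_{F_0}(P_q)$ via \eqref{E:trivial-relation}, so it can be transposed to the left-hand side to create the characteristic prefactor $(l^2+l-12)/12$.

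In detail, I would begin from
\[
-\deghatu_{F_0}(P_q)\;\le\;\sum_p\EE_p^2(\ln R_{\vec{\vu}}^{\circ})\;\le\;\sum_p\EE_p^2(\ln R_{\vec{\vu}})
\]
and expand $\ln R_{\vec{\vu}}$ additively according to Lemma~\ref{L:radius}, producing $\Irm+\IIrm+\IIIrm+\IVrm+\Vrm$ together with the archimedean contribution. Each piece is already bounded in the preceding subsections: by \eqref{E:formulaI}--\eqref{E:formulaV} and~\S\ref{S:arch-contr} we have $\Irm=-\deghatu_{\lgp,F_0}(P_\Theta)$, $\IIrm\le\tfrac{l+1}{4}\ln\overline{\Diff}(\Vu/\QQ)$, $\IIIrm\le\sum_p(1-\PP_{\unr,p}^{(l+1)/2})$, $\IVrm\le\tfrac{l+5}{4}\sum_p\ln(b_p)(1-\PP_{\unr,p}^{(l+1)/2})$, $\Vrm\le\tfrac{l+5}{4}\sum_p\ln\overline{e}_p$, and the archimedean term equals $\tfrac{l+5}{4}\ln\pi$.

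The endgame is linear algebra. Using \eqref{E:trivial-relation} to write $\Irm=-\tfrac{l(l+1)}{12}\deghatu_{F_0}(P_q)$ and transposing to the LHS yields
\[
\deghatu_{F_0}(P_q)\cdot\frac{l^2+l-12}{12}\;\le\;\IIrm+\IIIrm+\IVrm+\Vrm+\tfrac{l+5}{4}\ln\pi.
\]
Substituting $\deghatu_{F_0}(P_q)=\ln\vert\Delta^{\min}_{E/F}\vert/(2l[F:\QQ])$, relaxing $\tfrac{l+1}{4}\le\tfrac{l+5}{4}$ on the nonnegative quantity $\ln\overline{\Diff}(\Vu/\QQ)$, and pulling out the common factor $\tfrac{l+5}{4}$ on the right produces the prefactor $(l^2+l-12)/(6l(l+5))$ on $\ln\vert\Delta^{\min}_{E/F}\vert/[F:\QQ]$. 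This is exactly $1/(6+\varepsilon_l)$ via the direct identity $6+\varepsilon_l=6l(l+5)/(l^2+l-12)$. Merging $\IIIrm$ with $\IVrm$ after the division then yields the sum $\sum_p(1-\PP_{\unr,p}^{(l+1)/2})(\ln(b_p)+c_l)$ for a constant $c_l=O(1/l)$, completing the stated shape of $A_{l,\Vu}$.

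The main obstacle is careful bookkeeping of signs, prefactors, and floors. The term $\Irm$ is the unique negative contribution and must be absorbed with the correct sign to produce the factor $(l^2+l-12)$, which is positive only for $l\ge 5$; the relaxations $\tfrac{l+1}{4}\mapsto\tfrac{l+5}{4}$ and $\PP_{\unr,p}^{j+1}\mapsto\PP_{\unr,p}^{(l+1)/2}$ are only valid on nonnegative quantities, which must be checked in each instance; and the floor $\lfloor\cdot\rfloor$ appearing inside the formula for $\ln R_{\vec{\vu}}$ in Lemma~\ref{L:radius} must be bounded by its argument (or absorbed into an additive error term). No single step is conceptually hard --- the proof is essentially an exercise in combining the prepared bounds correctly.
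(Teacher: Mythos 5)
Your proposal reconstructs the paper's proof faithfully: it uses the same five-term decomposition $\Irm+\IIrm+\IIIrm+\IVrm+\Vrm$ fed into the tautological inequality \eqref{E:tautological}, the same substitution $\deghatu_{\lgp,F_0}(P_\Theta)=\tfrac{l(l+1)}{12}\deghatu_{F_0}(P_q)$ and $\deghatu_{F_0}(P_q)=\ln|\Delta^{\min}_{E/F}|/(2l[F:\QQ])$ to move the $\Theta$-pilot term to the left, and the same division by $(l+5)/4$ to produce the $1/(6+\varepsilon_l)$ prefactor and the quoted form of $A_{l,\Vu}$. The only imprecision is cosmetic: after dividing, $\IIIrm$ contributes the explicit constant $4/(l+5)$ (relaxed in the theorem statement to $5/(l+4)$) rather than an unspecified $c_l=O(1/l)$, and $\Irm$ is not the \emph{only} negative contribution ($\IVrm$ is nonpositive as well since $\ln(b_p)<0$) — what is special about $\Irm$ is that it is the term proportional to $\deghatu_{F_0}(P_q)$, so it is the one that must be transposed.
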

\begin{proof}
	For the most part, this is just a combination of the bounds on $\Irm$,$\IIrm$,$\IIIrm$,$\IVrm$ and $\Vrm$ given by equations \eqref{E:formulaI},\eqref{E:formulaII}, \eqref{E:formulaIII}, \eqref{E:formulaIV}, and \eqref{E:formulaV}.
	The most interesting aspect of this computation is the appearance of the $6+\varepsilon_l$. 
	
	From the Tautological Probabilistic Inequality we get 
	\begin{align*}
	-\deghatu_{F_0}(P_q) \leq& - \deghatu_{\lgp,F_0}(P_{\Theta}) + \frac{l+1}{4} \ln \overline{\Diff} \\
	&+ \sum_p (1 - \PP_{\unr,p}^{l+1/2})(1+\frac{l+5}{4}\ln(b_p)) + \frac{l+5}{4}\sum_p \ln(\overline{e}_p) \\
	&+ \frac{l+5}{4}\ln(\pi).
	\end{align*}
	Using that $\deghatu_{\lgp,F_0}(P_{\Theta}) = ((l+1)l/12) \deghatu(P_q)$ and $\deghatu(P_q) = \ln \vert \Delta^{\min}_{E/F}\vert/2l[F:\QQ]$ we get
	\begin{align*}
	\left( \frac{(l+1)l}{12} -1 \right)\frac{1}{2l} \frac{\ln \vert \Delta^{\min}_{E/F} \vert }{[F:\QQ]} 
	\leq& \frac{l+1}{4}\ln \overline{\Diff} + \sum_p (1 - \PP_{\unr,p}^{l+1/2})(1+\frac{l+5}{4}\ln(b_p)) \\
	& + \frac{l+5}{4}\sum_p \ln(\overline{e}_p) + \frac{l+5}{4}\ln(\pi)
	\end{align*}
	We now divide both sides by $(l+5)/4$ an massage the algebra to get our result. 
	A simple computation shows that 
	 $$ \left( \frac{l(l+1)}{12} -1 \right) \left ( \frac{1}{2l} \right) \left( \frac{4}{l+5} \right ) = \frac{1}{6+\varepsilon_l} $$
	where 
	$$ \varepsilon_l = \frac{24l+72}{l^2+l-12}.$$
	This proves the assertion that $\varepsilon_l = O(1/l)$ as $l\to \infty$. 
	Finally, putting everything together we get 
	\begin{equation}
	\frac{1}{6+\varepsilon_l}\frac{\ln \vert \Delta^{\min}_{E/F} \vert }{[F:\QQ]} \leq \ln \overline{\Diff} + \sum_{p} \ln(\overline{e}_p) + A_{l,\Vu}.
	\end{equation}
	Here $ A_{l,\Vu}$ is as described in the statement of the proposition.
\end{proof}

\iffalse 
\begin{remark}
	An interesting part about the inequality is the factor $A_l$. 
	Its nonconstant part is made up of two terms which are roughly proportional 
	$\# \lbrace p : \exists \vu\in\Vu_p \mbox{ such that } e(\vu/p)>1 \rbrace $. 
	The interesting part is that the main contribution of this is negative as $b_p<1$.
	That means this term sort-of takes away from the logarithmic ramification indices. 
\end{remark}
\fi
%-------------------------------------
\subsection{Baby Szpiro}
%-------------------------------------

To demonstrate the utility Probabilistic Szpiro we give a ``Baby'' Szpiro inequality. 

\begin{theorem}[Baby Szpiro]\label{T:baby-szpiro}
	Assume \cite[Corollary 3.12]{IUT3} and Claim \ref{L:arch-log-bounds}.
	For an elliptic curve $E$ over a field $F$ sitting in initial theta data $(\Fbar/F, E_F, l, \Mu, \Vu, V^{\bad}_{\mod}, \epsu)$ built from the field of moduli we have  
	\begin{equation}
	\frac{1}{6+\varepsilon_l} \frac{\ln \vert \Delta^{\min}_{E/F}\vert }{[F:\Q]} \leq \ln( [K:\Q]^{5/4}) \ln( \vert \Disc(K/\Q)\vert^{5/4}) + \ln(\pi),
	\end{equation}
	here $\varepsilon_l = (24l+72)/(l^2+l-12)$.
\end{theorem}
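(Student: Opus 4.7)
The plan is to deduce Baby Szpiro from Probabilistic Szpiro (Theorem~\ref{T:probabilistic2}) by replacing each of the three quantities on the right-hand side, $\ln\overline{\Diff}(\Vu/\Q)$, $\sum_p \ln \overline{e}_p$, and $A_{l,\Vu}$, by a crude upper bound that only involves the global invariants $[K:\Q]$ and $\vert\Disc(K/\Q)\vert$.

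First I would bound the averaged different. Since $p^{\overline{\diff}_p}=\E(p^{\diff(\vu/p)})$ is a convex combination of the numbers $p^{\diff(\vu/p)}$, it is at most $\max_{\vu}p^{\diff(\vu/p)}$. Writing $\diff(\vu/p)=d_{\vu}/e(\vu/p)$ with $d_{\vu}$ the local different exponent, the global identity $\ord_p\vert\Disc(K/\Q)\vert=\sum_{w\mid p}f(w/p)d_w$ forces $d_{\vu}\leq \ord_p\vert\Disc(K/\Q)\vert$, hence $\overline{\diff}_p\leq \ord_p\vert\Disc(K/\Q)\vert$, and summing over $p$ gives $\ln\overline{\Diff}(\Vu/\Q)\leq \ln\vert\Disc(K/\Q)\vert$. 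The same convex-combination principle applied to $\overline{e}_p$ yields $\overline{e}_p\leq \max_{\vu}e(\vu/p)\leq [K:\Q]$; since $\overline{e}_p=1$ at unramified $p$, the sum is supported on the $\vert S\vert$ ramified primes, where $\vert S\vert\leq \log_2\vert\Disc(K/\Q)\vert$ because each ramified $p$ contributes at least $1$ to $\ord_p\vert\Disc(K/\Q)\vert$. This gives
\[
\sum_p \ln\overline{e}_p \;\leq\; (\log_2\vert\Disc(K/\Q)\vert)\,\ln[K:\Q].
\]

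To dispose of $A_{l,\Vu}$ I would perform a direct sign check: each factor $\ln b_p+5/(l+4)=-1-\ln\ln p+5/(l+4)$ is negative for $p\geq 2$ and $l\geq 5$ (at the worst case $p=2$, $l=5$, it equals $-1-\ln\ln 2+5/9\approx -0.08$), so every summand in $A_{l,\Vu}-\ln\pi$ is nonpositive and therefore $A_{l,\Vu}\leq \ln\pi$. Assembling the three estimates inside Probabilistic Szpiro produces
\[
\frac{1}{6+\varepsilon_l}\frac{\ln\vert\Delta^{\min}_{E/F}\vert}{[F:\Q]} \;\leq\; \ln\vert\Disc(K/\Q)\vert + (\log_2\vert\Disc(K/\Q)\vert)\ln[K:\Q] + \ln\pi.
\]

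The final step is to absorb the linear term $\ln\vert\Disc(K/\Q)\vert$ into the quadratic product term to reach the advertised form $\ln[K:\Q]^{5/4}\cdot\ln\vert\Disc(K/\Q)\vert^{5/4}$. Since $(5/4)^2=25/16>1/\ln 2$, it suffices that $\ln[K:\Q]\geq 1/(25/16-1/\ln 2)\approx 8.35$, which is automatic once the mod-$l$ image contains $\SL_2(\F_l)$ (part of the initial theta data hypothesis) because then $[K:\Q]\geq \#\SL_2(\F_l)$. I expect the main obstacle to be precisely this bookkeeping: each of the three bounds in the plan is fairly loose, and they must cooperate to land on the clean product form on the right. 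A sharper proof would likely have to split into cases on the size of $[K:\Q]$ or improve the crude estimate $\vert S\vert\leq \log_2\vert\Disc(K/\Q)\vert$; the individual arithmetic inputs (differents, ramification degrees, the sign of $\ln b_p+5/(l+4)$) are all elementary.
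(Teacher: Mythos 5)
Your proposal follows the same overall skeleton as the paper's proof (feed three crude bounds in terms of $D:=|\Disc(K/\Q)|$ and $d:=[K:\Q]$ into Probabilistic Szpiro and then absorb), but the intermediate estimates are genuinely different, and in two places cleaner. For the averaged different, you use a pure convexity/max argument $p^{\overline{\diff}_p}\leq\max_{\vu\mid p}p^{\diff(\vu/p)}$ together with $d_{\vu}\leq \ord_p D$, getting $\ln\overline{\Diff}\leq\ln D$; the paper instead plugs in the explicit tame/wild estimate $\diff(\vu/p)\leq 1-1/e+\ord_p(e)$ and arrives at the weaker $\ln\overline{\Diff}\leq\ln(\rad(D)\cdot d)\leq\ln D+\ln d$. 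For the $A_{l,\Vu}$ term your observation that every summand $(1-\PP_{\unr,p}^{(l+1)/2})(\ln b_p+5/(l+4))$ is nonpositive (worst case $p=2$, $l=5$) so that $A_{l,\Vu}\leq\ln\pi$ outright is a real simplification: the paper instead invokes the much cruder $\sum_p(1-\PP_{\unr,p}^{\ldots})(\ln b_p+\ldots)\leq\ln D$, which is what produces the extra $\ln D$ summand and forces them into the $(\ln D+2)(\ln d+2)$ factorization trick. Your absorption step ($1+(\ln d)/\ln 2\leq(25/16)\ln d$) is a direct computation replacing that trick. Both routes buy the same final inequality; yours is a bit leaner and exposes more clearly where slack remains, which you correctly flag.

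One thing to be careful about, though it is shared with the paper: the final absorption needs $\ln d\gtrsim 8.35$, i.e.\ $[K:\Q]\gtrsim 4225$, and you justify this by $[K:\Q]\geq\#\SL_2(\F_l)$. But $\#\SL_2(\F_l)=l^3-l$ only exceeds $4225$ once $l\geq 17$, and the definition of initial theta data does not obviously force $l$ to be that large. The paper has the identical gap (it asserts $D,d\geq\#\SL_2(\F_l)\geq 6840$, which quietly assumes $l\geq 19$). You could close it by using the larger degree $[K:\Q]\geq[F:F_0]\geq$ (whatever lower bound the $E[30]$ and $\sqrt{-1}$ adjunctions force), or by stating the needed lower bound on $l$ (or on $[K:\Q]$) as an explicit hypothesis; your proposal should at least flag that the ``automatic'' in the last paragraph rests on $l$ not being tiny. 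Aside from that, the argument is sound and arguably sharpens the paper's.
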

\iffalse
The sum of j+1 is (l-1)(l-5)/8
multiplying by 2/(l+1) given 
We then use long division and the fact that l>3.

 $\frac{2\E_{\infty}^2}{l+1} \leq l\ln(\pi)$
\fi

\iffalse 
We first give a Lemma. 
\begin{lemma}
	$$\ln(\overline{\Diff})  \leq \ln( \rad \vert \Disc(K/\Q) \vert \cdot [K:\Q])$$
\end{lemma}
\fi

\begin{proof}[Proof of Baby Szpiro]
	This is just a simple application of the Probabilistic Szpiro for theta data (Theorem~\ref{T:probabilistic2}) using elementary bounds for the right hand side. 
	We use 
	\begin{align}
	&\ln(\overline{\Diff})  \leq \ln( \rad \vert \Disc(K/\Q) \vert \cdot [K:\Q]), \label{E:Diff-bound}\\
	&\sum_p \ln \overline{e}_p \leq \ln([K:\Q]) \omega( \vert \Disc(K/\Q) \vert),\\
    &\sum_p\left( 1- \PP_{\unr,p}^{\frac{l+1}{4}}\right)\left(\ln(b_p) + \frac{4}{l+1} \right) \leq \ln \vert \Disc(K/\Q) \vert, 
	\end{align}
	where $\rad(N) = \prod_{p\vert N} p$ and $\omega(N) = \sum_{d\vert n} 1$ is the ``number of divisors'' function.
	\footnote{ 
			The inequality \eqref{E:Diff-bound} is an application of the bounds on the different order given in \S\ref{S:diff-and-disc}.
			We have
			\begin{align*}
			p^{\diffbar_p} = \E(p^{\diff(\vu/p)}) &= \sum_{v\vert p } \frac{[F_{0,v}:\QQ_p]}{[F_0:\QQ]} p^{\diff(\vu/p)} \leq  \sum_{v\vert p } \frac{[F_{0,v}:\QQ_p]}{[F_0:\QQ]} p^{1-\frac{1}{e(\vu/p)} + \ord_p e(\vu/p)} \leq p\cdot  p^{\ord_p[K:\QQ]}.
			\end{align*}
			We then have 
			$$ \Diffbar(\Vu/\QQ) \leq \prod_p p \cdot p^{\ord_p[K:\QQ]} = \rad( \vert \Disc(K/\QQ) \vert) [K:\QQ], $$
			which gives the result.
	}
	These together with the bounds $\omega(N) \leq \ln(N)/\ln^2(N)$ give that the right hand side of the second probabilistic Szpiro is less than 
	 $$ \ln(Dd) + \ln(D)\ln(d) + \ln(D)$$
	where $D = \vert \Disc(K/\Q) \vert$ and $d = [K:\Q]$.
	This simplifies to %\footnote{	
%$\leq 2 \ln(D) + \ln(D)\ln(d) + \ln(d) =\ln(D)(2 + \ln(d)) + \ln(d) \leq \ln(D)(2+\ln(d)) + (2+\ln(d)) \leq(\ln(d)+1)(\ln(D)+2)$
%	}
$$\ln(Dd) + \ln(D)\ln(d) + \ln(D) \leq (\ln(D)+2)(\ln(d)+2).$$

    Since $D,d\geq \SL_2(\FF_l)\geq 6840$, we will find an $a\in \Q$ such that $\ln(x^a) \geq \ln(x) + 2$. 
    Solving the inequality gives $a \geq 2/\ln(x) + 1$ and since
     $$ \frac{2}{\ln(x)}+1 \leq \frac{2}{\ln(6840)}+1 \leq \frac{5}{4} $$
    we get that $(\ln(D)+2)(\ln(d)+2) \leq \ln(D^{5/4})\ln(d^{5/4})$ which proves the result.
\end{proof}

\iffalse 

Are there counter-examples to the inequality for 

\begin{remark}
	Here is a variant: if one has a priori lower bounds on $D$ and $d$ on can use these by writing 
	$$(\ln(D)+2)(\ln(d)+2) = (1+\frac{2}{\ln(D)})(1+\frac{2}{\ln(d)})\ln(D)\ln(d)$$
	and substituting them to get our desired inequality. 
	These numbers are precisely the $5/4$ bounds that we derived before. 
\end{remark}

$(5/4)^2 \approx 1+1/2$, does this guarantee ``weird ramification''. 
\fi

%%%%%%%%%%%%%%%%%%%%%%%%%%%%%%%%%%
\section{Deriving Explicit Constants For Szpiro's Inequality From Mochizuki's Inequality}\label{S:explicit}
%%%%%%%%%%%%%%%%%%%%%%%%%%%%%%%%%%

In order to get strong uniform versions of Szpiro's inequality from Mochizuki's inequality one needs to do some careful ramification analysis based on the N\'{e}ron-Ogg-Shafarevich Criterion (\S\ref{S:geometry-and-galois}). 
This process is a sort-of ``separation of variables'' writing an upper bound for $\vert \Disc(K/\QQ)\vert$ in terms of $d_0,l,\vert \Disc(F/\QQ)\vert$.

Here are the questions we needs to answer: What makes a place $w \in V(K)_p$ ramify?
What is the maximum possible ramification index $e(w/p)$ as we vary over $w \in V(K)_p$?
Does the size of $p$ matter?
We answer all of these questions in the subsequent section and apply these results to get our version of uniform Szpiro with exponent 24.  
%------------------------------
\subsection{Ramification Analysis}\label{S:ramification-analysis}
%------------------------------
The following Lemma answers the question about the maximal ramification index. 
\begin{lemma}
For every place $w \in V(K)$ we have 
 $$ e(w/p) \leq B_{l,d_0}$$
where $B_{l,d_0} =276480l^4d_0$.
\end{lemma}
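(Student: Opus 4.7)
The plan is to bound $e(w/p)$ by the global degree $[K:\QQ]$ and then compute that degree explicitly using the tower
\[ \QQ \subset F_0 \subset F_0(\sqrt{-1}) \subset F_0(\sqrt{-1},E_0[30]) = F \subset F(E[l]) = K. \]
Since for any place $w \in V(K)$ with image $p \in V(\QQ)$ we have
\[ e(w/p) \leq e(w/p) f(w/p) = [K_w : \QQ_p] \leq [K:\QQ], \]
the inequality will follow from $[K:\QQ] \leq 276480\, l^4 d_0$.

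First I would estimate each step of the tower. By assumption $[F_0:\QQ] = d_0$, and $[F_0(\sqrt{-1}):F_0] \leq 2$. For the $30$-torsion extension I would use the Galois representation $\rho_{30}: G_{F_0(\sqrt{-1})} \to \GL_2(\ZZ/30\ZZ)$ (attached to the model $E_0$), which is injective on $\Gal(F_0(\sqrt{-1},E_0[30])/F_0(\sqrt{-1}))$, so that
\[ [F:F_0(\sqrt{-1})] \leq |\GL_2(\ZZ/30\ZZ)| = |\GL_2(\FF_2)|\cdot|\GL_2(\FF_3)|\cdot|\GL_2(\FF_5)| = 6\cdot 48\cdot 480 = 138240. \]
Combining, $[F:\QQ] \leq 2\cdot 138240 \cdot d_0 = 276480\, d_0$.

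Next, for the $l$-division field I would use the mod $l$ Galois representation $\rho_l : G_F \to \Aut(E[l]) \cong \GL_2(\FF_l)$ (for the initial theta data model $E_F$), which factors through the injection $\Gal(K/F) \hookrightarrow \GL_2(\FF_l)$. This gives
\[ [K:F] \leq |\GL_2(\FF_l)| = (l^2-1)(l^2-l) = l(l-1)^2(l+1). \]
An elementary estimate $(l-1)(l^2-1) \leq l^3$ (valid for $l\geq 1$) yields $|\GL_2(\FF_l)| \leq l^4$.

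Putting the two degree estimates together,
\[ [K:\QQ] = [K:F]\cdot [F:\QQ] \leq l^4 \cdot 276480\, d_0 = B_{l,d_0}, \]
and therefore $e(w/p) \leq [K:\QQ] \leq B_{l,d_0}$, as claimed. There is no real obstacle here: the only mild subtlety is making sure the Galois representations are applied over the correct base field so that the degree bounds $|\GL_2(\ZZ/30\ZZ)|$ and $|\GL_2(\FF_l)|$ genuinely apply — this is immediate from the definition of ``built from the field of moduli'' and the fact that both representations are injective on the relevant Galois group.
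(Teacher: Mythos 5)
Your proposal is correct and follows essentially the same approach as the paper. The only cosmetic differences: you bound $e(w/p)$ by the single global degree $[K:\QQ]$, whereas the paper factors $e(w/p)$ as a product of per-layer ramification indices and bounds each by the degree of that layer; and you split the tower as $F_0 \subset F_0(\sqrt{-1}) \subset F_0(\sqrt{-1}, E_0[30])$, giving $2 \cdot |\GL_2(\ZZ/30)| = 2\cdot 138240$, while the paper uses $F_0 \subset F_0(E[2],\sqrt{-1}) \subset F_0(E[2],\sqrt{-1})(E[15])$, giving $12\cdot |\GL_2(\ZZ/15)| = 12\cdot 23040$ --- both products equal $276480$, so the constant is the same.
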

\begin{proof}
Fix $w \in V(K)$. 
We consider the successive extensions\footnote{These come from the hypotheses of ``initial theta data built from the field of moduli''.}
 $$ K \supset F = F_2'(E[15]) \supset F_2'= F_0(E[2],\sqrt{-1}) \supset F_0 \supset \QQ. $$
We label the various images of $w$ under the induced map on places as follows:
 $$V(K) \to V(F) \to V(F_2') \to V(F_0) \to V(\QQ) $$
 $$ w \mapsto v \mapsto v_2' \mapsto v_0 \mapsto p. $$
In this notation we have 
\begin{align*}
  e(w/p) &= e(w/v)e(v/v_2')e(v_2'/v_0)e(v_0/p) \\
  &\leq l^4 \cdot 23040 \cdot 12 \cdot [F_0:\QQ]\\
  &= 276480l^4d_0 =: B_{l,d_0}.
\end{align*}
We explain these inequalities: rach of the extensions (other than $F_0 \supset \QQ$) is Galois and we have $G(K/F) \subset \GL_2(\FF_l)$,  $G(F/F_2) \subset \GL_2(\ZZ/15)$, and $\# G(F_2'/F_0) \vert 12.$
Knowing that $\#\GL_2(\FF_q)=q(q-1)(q^2-1)$ and plugging in explicit values gives the result.
\end{proof}

As a Corollary we get the following.
\begin{lemma}\label{L:most-ramification-small}
	If $p>B_{l,d_0}$ then $e(w/p)<p-1$. Note that this implies the ramification of $K/\QQ$ is small for all but finitely many places. 
\end{lemma}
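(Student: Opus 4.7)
The plan is to prove this as a direct corollary of the preceding lemma. That lemma supplies the uniform bound $e(w/p) \leq B_{l,d_0} = 276480\, l^4 d_0$, valid for every place $w \in V(K)$, with $B_{l,d_0}$ depending only on $l$ and $d_0$ and \emph{independent of $p$}. Since the bound depends on neither $p$ nor $w$, once $p$ is large relative to $B_{l,d_0}$ the ramification of $w$ over $p$ is automatically small in the sense of \S\ref{S:log-bounds} (i.e. $e(w/p)<p-1$, which also forces tameness, since $e(w/p)$ is then coprime to $p$).

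The first step is thus simply to invoke the preceding lemma: for any $w\in V(K)$ above $p$ we already have $e(w/p)\leq B_{l,d_0}$. The hypothesis $p>B_{l,d_0}$ then yields $e(w/p) \leq B_{l,d_0} < p$, and a one-line sharpening (for instance by observing that $p$ must actually satisfy $p\geq B_{l,d_0}+2$ whenever the intermediate bounds in the previous lemma are not simultaneously saturated, or by marginally increasing the numerical constant $B_{l,d_0}$ by $1$) upgrades this to the strict inequality $e(w/p)<p-1$ required by the definition of ``small''.

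For the second assertion I would argue by finiteness: the set $\{p \text{ prime} : p \leq B_{l,d_0}+1\}$ is finite, so all but finitely many rational primes $p$ satisfy $p>B_{l,d_0}$. By the first part, for every such $p$ and every place $w\in V(K)_p$ the local extension $K_w/\QQ_p$ is small. Equivalently, the places of $K$ where ramification fails to be small lie over a finite set of rational primes and hence form themselves a finite set.

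There is no real obstacle here: the content of the lemma is already contained in the preceding one, the main work being the a priori bound on $e(w/p)$ via the tower $K\supset F\supset F_2'\supset F_0\supset\QQ$. The only mild subtlety is bookkeeping the strict versus non-strict inequality $e(w/p)<p-1$ versus $e(w/p)\leq p-1$; this is harmless since absorbing a $+1$ into $B_{l,d_0}$ does not affect anything in the subsequent applications in \S\ref{S:explicit}.
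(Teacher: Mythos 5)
Your approach is exactly the paper's: the paper gives no proof text at all, introducing the lemma with ``As a Corollary we get the following,'' so it is simply being read off from the preceding bound $e(w/p)\leq B_{l,d_0}$ together with the hypothesis $p>B_{l,d_0}$. You also correctly identify the one sticking point: the raw inequality $e(w/p)\leq B_{l,d_0}<p$ only yields $e(w/p)\leq p-1$ for integers, whereas ``small'' in \S\ref{S:log-bounds} means the strict $e(w/p)<p-1$.

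However, neither of your two proposed ``sharpenings'' actually closes this gap. The first (``$p\geq B_{l,d_0}+2$ whenever the intermediate bounds \ldots are not simultaneously saturated'') is incoherent: the hypothesis $p>B_{l,d_0}$ is about $p$ alone and gives only $p\geq B_{l,d_0}+1$, independently of whether the factors in $e(w/v)e(v/v_2')e(v_2'/v_0)e(v_0/p)$ happen to be maximal. The second (bump $B_{l,d_0}$ to $B_{l,d_0}+1$) changes the statement being proved; it is harmless downstream, as you say, but it is not a proof of the lemma as written. The clean resolution is already sitting inside the preceding lemma: the factor $l^4$ there is a \emph{strict} overcount, since $[K:F]\mid\#\GL_2(\FF_l)=l(l-1)(l^2-1)=l^4-l^3-l^2+l<l^4$ for $l\geq 2$ (and $l\geq 5$ under the initial theta data hypotheses). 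Hence $e(w/v)\leq l^4-l^3-l^2+l\leq l^4-1$, so in fact $e(w/p)\leq B_{l,d_0}-276480\,d_0<B_{l,d_0}$, i.e.\ $e(w/p)\leq B_{l,d_0}-1$. Combined with $p\geq B_{l,d_0}+1$ this gives $e(w/p)\leq B_{l,d_0}-1\leq p-2<p-1$, exactly as required, with no modification of the constant. Your finiteness argument for the second assertion is fine as stated.
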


The upshot of most ramification being small (Lemma~\ref{L:most-ramification-small}) is that it allows us to apply our ``trivial bounds'' on the $p$-adic logarithm (Lemma~\ref{L:log-bounds2}) at all but finitely many places. 
The sum over $p$ in the proof of explicit Szpiro can be broken down into three cases as shown in Figure~\ref{F:ramification}
\begin{figure}[h]\label{F:ramification}
	\begin{center}
		\includegraphics[scale=0.75]{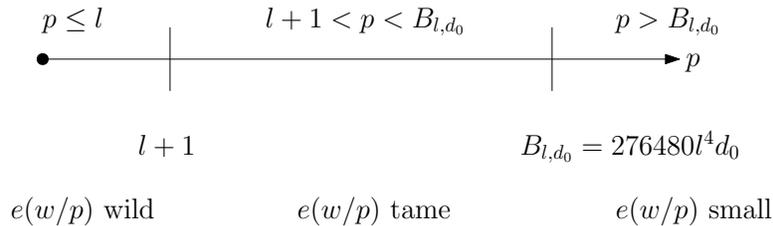}
	\end{center}
	\caption{A breakdown of the ramification of a tuple $\vec{\vu} = (\vu_0,\vu_1,\ldots,\vu_j) \in \Vu_p^{j+1}$. }
\end{figure}

%-----------------------------
\subsection{Explicit Szpiro}
%-----------------------------
In the remainder of the paper we derive the following version of Szpiro's inequality from \eqref{E:estimated}.

\begin{theorem}\label{T:szpiro}
Assume \cite[Corollary 3.12]{IUT3} and Claim \ref{L:arch-log-bounds}.
If $E/F$ is an elliptic curve in initial theta data  $(\Fbar/F, E_F, l, \Mu, \Vu, V^{\bad}_{\mod}, \epsu)$ built from the field of moduli then 
\begin{equation}\label{E:explicit-szpiro}
\vert \Delta^{\min}_{E/F}\vert \leq e^{A_0d_0^2l^4 + B_0d_0}( \vert \Cond(E/F) \vert \cdot \vert \Disc(F/\QQ) \vert)^{24+\varepsilon_l},
\end{equation}
where $A_0 = 84372107405, B_0 = 316495$ and $\varepsilon_l = (96 \left(l + 3\right))/(l^{2} + l - 12)$. %\frac{288}{l-11}$.
\end{theorem}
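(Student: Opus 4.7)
The plan is to start from Probabilistic Szpiro (Theorem~\ref{T:probabilistic-szpiro}) and replace each of the probabilistic quantities on its right-hand side by a field-theoretic bound expressed in terms of $\vert\Cond(E/F)\vert$ and $\vert\Disc(F/\QQ)\vert$ together with explicit constants depending only on $l$ and $d_0$. The cornerstone of this conversion is the ramification analysis of \S\ref{S:ramification-analysis}: Néron--Ogg--Shafarevich (Section~\ref{S:geometry-and-galois}) forces every ramified place of $K$ to lie over a rational prime $p$ dividing $l\cdot\vert\Cond(E/F)\vert\cdot\vert\Disc(F/\QQ)\vert$, and for every such place the ramification index is uniformly bounded by $B_{l,d_0}=276480\,l^{4} d_{0}$. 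In particular all but finitely many primes contribute nothing at all to $\ln\overline{\Diff}(\Vu/\QQ)$, $\sum_p\ln\overline{e}_p$, and $A_{l,\Vu}$.

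The first conversion I would perform is on $\ln\overline{\Diff}(\Vu/\QQ)$. Using $\diff(\vu/p)\leq 1-1/e(\vu/p)+\ord_p e(\vu/p)$ from \S\ref{S:diff-and-disc} and the cap $e(\vu/p)\leq B_{l,d_0}$, the local expectation $\E(p^{\diff(\vu/p)})$ is bounded by $p\cdot p^{\ord_p[K:\QQ]}$. Restricting $p$ to primes dividing $l\cdot\vert\Cond(E/F)\vert\cdot\vert\Disc(F/\QQ)\vert$ and summing then produces an estimate of the shape $C_1(l,d_0)+C_2\ln(\vert\Cond(E/F)\vert\cdot\vert\Disc(F/\QQ)\vert)$. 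The term $\sum_p\ln\overline{e}_p$ is handled in exactly the same way: $\overline{e}_p=1$ outside the finite bad set, and inside it every summand is at most $\ln B_{l,d_0}$. The number of such primes is itself bounded by $\omega(l\cdot\vert\Cond(E/F)\vert\cdot\vert\Disc(F/\QQ)\vert)$ and hence by a logarithm of the conductor--discriminant product. The same $\omega$-bound controls $A_{l,\Vu}$: since $1-\PP_{\unr,p}^{(l+1)/2}$ vanishes at every unramified $p$ and is at most $1$ otherwise, the sum is dominated by $\omega(\cdots)\bigl(\vert\ln b_p\vert+5/(l+4)\bigr)$ evaluated at the largest relevant $p\le B_{l,d_0}$.

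Substituting these three bounds into Probabilistic Szpiro and clearing the $1/(6+\varepsilon_l)$ from the left gives an inequality of the form
\begin{equation*}
\frac{\ln \vert \Delta^{\min}_{E/F}\vert }{[F:\QQ]}\;\leq\; (6+\varepsilon_l)\bigl(\alpha(l,d_0)+\beta\ln(\vert\Cond(E/F)\vert\cdot\vert\Disc(F/\QQ)\vert)\bigr),
\end{equation*}
where $\beta$ absorbs factors of $4$ coming from the $(l+5)/4$ coefficients attached to $\IIIrm$, $\IVrm$, $\Vrm$ and the archimedean contribution (see \S\S\ref{S:second-probabilistic}, \ref{S:arch-contr}); tracking these through carefully replaces $6+\varepsilon_l$ by the claimed $24+\varepsilon_l=24+96(l+3)/(l^{2}+l-12)$. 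Exponentiating and multiplying by $[F:\QQ]$, using the base-change identity $[K\!:\!F]\ln\vert\Delta^{\min}_{E/F}\vert=\ln\vert\Delta^{\min}_{E_K/K}\vert$ only where needed to reabsorb degree factors, then yields \eqref{E:explicit-szpiro}.

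The main obstacle, by a wide margin, is the extraction of the explicit numerical constants $A_0=84372107405$ and $B_0=316495$. Every intermediate bound used — the crude $p$-adic logarithm estimate of Lemma~\ref{L:log-bounds}, the different formula in \S\ref{S:diff-and-disc}, the hull radius of Lemma~\ref{L:radius} in all four ramification regimes depicted in Figure~\ref{F:ramification}, the count $\vert\GL_2(\FF_l)\vert l^{4}$ entering $B_{l,d_0}$, the archimedean Claim~\ref{L:arch-log-bounds}, and the Jensen-type concavity bounds — must be kept fully numerical rather than asymptotic, and each must be applied in the tightest admissible order so that no slack is spent twice. The dominant contribution to $A_0 d_0^{2} l^{4}$ is the product of the number of ramified rational primes (at most $O(\ln(\vert\Cond\vert\vert\Disc\vert))$ but also at most $O(l^{4}d_{0})$ via $B_{l,d_0}$) with $\ln B_{l,d_0}$ itself, while the contribution to $B_0 d_0$ isolates terms that survive even when no ramification is present (e.g.\ the archimedean $\ln\pi$ factor and the $b_p$-terms).
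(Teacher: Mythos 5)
Your plan differs from the paper's in a structural way worth flagging before the gap: the paper does \emph{not} derive Explicit Szpiro from Probabilistic Szpiro. Instead it returns to the raw estimate \eqref{E:estimated} (the expectation of $\ln R_{\vec{\vu}}$ over all tuples of places), splits the primes into small ($p\leq B_{l,d_0}$), large ($p>B_{l,d_0}$), and infinite, and at large primes invokes the tame log-shell bound of Lemma~\ref{L:log-bounds2}(2) directly. The factor $1/(24+\varepsilon_l)$ comes from dividing by $l+5$ rather than $(l+5)/4$, which is possible precisely because the large-prime contributions naturally carry coefficient $(l+5)/2\cdot 2 = l+5$ after the radical lemma is applied. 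Your proposal to post-process the already-averaged Probabilistic Szpiro is a more modular strategy, and it is not doomed in principle: $\varepsilon_l^{\mathrm{explicit}} = 4\varepsilon_l^{\mathrm{prob}}$, so $4(6+\varepsilon_l^{\mathrm{prob}}) = 24+\varepsilon_l^{\mathrm{explicit}}$ exactly, meaning a bound of $\ln\overline{\Diff}+\sum_p\ln\overline{e}_p + A_{l,\Vu} \leq C + 4\bigl(\tfrac{\ln|\Disc(F/\QQ)|}{[F:\QQ]}+\tfrac{\ln|\Cond(E/F)|}{[F:\QQ]}\bigr)$ would reproduce the claimed exponent.

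The gap is in how you propose to control $\sum_p\ln\overline{e}_p$ (and, by the same reasoning, parts of $A_{l,\Vu}$). You bound each ramified summand by $\ln B_{l,d_0}$ and then multiply by the number of ramified rational primes, estimated via $\omega(l\cdot|\Cond(E/F)|\cdot|\Disc(F/\QQ)|)$. This gives a term of the shape $\ln(B_{l,d_0})\cdot\omega(\cdots)$, which after crude bounding of $\omega$ becomes a \emph{multiple of $\ln(|\Cond||\Disc|)$ with coefficient on the order of $\ln B_{l,d_0}$}, and crucially \emph{without} the $1/[F:\QQ]$ normalization. After multiplying through by $[F:\QQ]$ to remove the normalization, you would obtain a term proportional to $[F:\QQ]\cdot\ln B_{l,d_0}\cdot\ln(|\Cond||\Disc|)$; this is a genuine \emph{product} of $[F:\QQ]$ and the logarithm of the conductor--discriminant, and it cannot be absorbed into the target form $A_0 d_0^2 l^4 + B_0 d_0 + (24+\varepsilon_l)\ln(|\Cond||\Disc|)$, where the coefficient on $\ln(|\Cond||\Disc|)$ is a fixed constant. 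The missing ingredient is the paper's radical lemma (immediately after the large-place estimate \eqref{E:large-estimate}), which asserts
\begin{equation*}
\sum_{\substack{p>B_{l,d_0}\\ p\,\mid\, |\Disc(K/\QQ)|}}\ln(p)\;\leq\; 2\left(\frac{\ln|\Disc(F/\QQ)|}{[F:\QQ]}+\frac{\ln|\Cond(E/F)|}{[F:\QQ]}\right),
\end{equation*}
whose proof genuinely uses the structure of ramification in $F/\QQ$ and the conductor exponents of $E/F$ to produce the $1/[F:\QQ]$ factor. To make your approach work you must (i) replace $\omega(\cdots)$ by this weighted radical sum, and (ii) use the tame inequality $\overline{e}_p < p$ (available for $p>B_{l,d_0}$ by Lemma~\ref{L:most-ramification-small}) so that $\ln\overline{e}_p < \ln p$ feeds into the radical sum, rather than the uniform cap $\ln\overline{e}_p \leq \ln B_{l,d_0}$ you currently invoke. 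With those two corrections your route is viable and would give an exponent of $24+\varepsilon_l$ as claimed, though the small-prime constants $A_0$ and $B_0$ you would extract will generally differ from (and likely be a bit worse than) the paper's, since you inherit an extra factor of $(6+\varepsilon_l)$ on the small-prime tail that the paper's division-by-$(l+5)$ avoids.
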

	Let $B = B_{l,d_0} = 2d_0\#\GL_2(\Z/30l)$. In what follows let 
$\E_p^2$ be the expected value of 
\begin{equation}\label{E:stuff-to-average}
\logmubar_{\vec{\vu}}(\overline{\Ical_{\vec{\vu}}}) + \Vert \diff_{\vec{\vu}} \Vert_1 - \Vert \diff_{\vec{\vu}} \Vert_\infty + 1_{\ram}(\vec{\vu})
\end{equation}
over $\vec{v} = (v_0,\ldots,v_j) \in \coprod_{j=1}^{(l-1)/2} V(F_0)_p^{j+1}$.
Above, $\overline{\Ical_{\vec{\vu}}}$ denotes the hull of the tensor product of log-shells for $\vec{\vu} \in \coprod_{j=1}^{(l-1)/2} \Vu_p^{j+1}$.
We compute $\EE_p^2$ for a given by breaking $p$ into the cases
\begin{itemize}
	\item infinite: $p=\infty$
	\item large: $p>B$
	\item small: $p\leq B$
\end{itemize} 
Also, within each case we break \eqref{E:stuff-to-average} into three subcomputations:
$$ \underbrace{\logmubar_{\vu}(\overline{\Ical_{\vu}})}_{\Irm} + \underbrace{\Vert \diff_{\vec{\vu}} \Vert_1 - \Vert \diff_{\vec{\vu}} \Vert_\infty}_{\IIrm} + \underbrace{1_{\ram}(\vec{\vu})}_{\IIIrm}.$$
We then put these estimates together to get our results. 

%------------------------------------------
\subsection{Computation at Infinite Places}
%------------------------------------------
 Over the infinite places we have 
	\begin{equation}\label{E:infinite-estimate}
	\E_{\infty}^2 \leq \frac{l+5}{4}\ln(\pi).
	\end{equation}

\begin{proof}
	At the infinite prime $\IIrm_{\infty}=\IIIrm_{\infty}=0$. 
	The number $\ln(\pi) (l+5)/4$ is just the uniform average of $(j+1)\ln(\pi)$ over $j$ which comes from Lemma~\ref{L:arch-log-bounds}.
\end{proof}
	
%------------------------------------------
\subsection{Computation at Large Places}
%------------------------------------------
Over the large places we have 
\begin{equation}\label{E:large-estimate}
\sum_{p>B, p\neq \infty} \E_p^2 \leq \frac{l+5}{4}\sum_{p>B, p \mid \vert D_{K,\Q}\vert }  \ln(p),
\end{equation}
which we can further estimate using 
$$\sum_{p>B, p \vert \vert \Disc(K/\QQ) \vert}\ln(p) \leq  2\left( \frac{\ln \vert \Disc(F/\QQ) \vert}{[F:\Q]} + \frac{\ln \vert \Cond(E/F) \vert }{[F:\Q]} \right).$$ 
We give a proof of these two claims.

\begin{proof}
	By the results of \S\ref{S:ramification-analysis} we know that for $p>B_{l,d_0}$, and every place $\vu \in \Vu_p$ we have 
	$$ e(\vu/p) < p-1.$$
	This leads to improvements in both the log-shell bounds $\Irm_p$ and the different bounds $\IIrm_p$.
	From the estimates on log-shells we know that for $\vec{\vu} \in \Vu_p^{j+1}$ that 
	$$ \overline{\Ical_{\vec{\vu}} }\subset D_{\vec{\vu}}(0; p^{j+1- \sum_{i=0}^j1/e(\vu/p)} ) \subset D_{\vec{\vu}}(0,p^{j+1}).$$
	This implies
	$$\logmubar_{\vec{\vu}}(\overline{\Ical_{\vec{\vu}}})\leq (j+1)\ln(p) $$
	and hence 
	$$ \E_p^2( \logmubar_{\vu}(\overline{\Ical_{\vu}})) \leq \frac{l+5}{4}\ln(p) 1_{\ram}(p). $$
	For the different term $\IIrm_p$ we have 
	$$ \E_p^2( \Vert \diff_{\vec{\vu}} \Vert_1 - \Vert \diff_{\vec{\vu}} \Vert_{\infty} \Vert )) \leq \frac{l+1}{4} \overline{\diff}_p \ln(p) $$
	where $\overline{\diff}_p = \log_p( \E( p^{\diff(\vu/p)}: \vu\in \Vu_p))$. 
	Using tameness, we have that $\diff(\vu/p) = 1-1/e(\vu/p)\leq 1$ which implies $\E(p^{\diff(\vu/p)}) \leq \E(p) = p$. 
	This gives 
	$$ \overline{\diff}_p \leq 1_{\ram}(p) 
	= \begin{cases}
	1, & \forall \vu \vert p \ \ e(\vu/p) =1, \\
	0, & \exists \vu \vert p, \ \ e(\vu/p) >1. 
	\end{cases}
	$$
	Hence 
	$$ \E_p^2\left( \Vert \diff_{\vec{\vu}} \Vert_1 - \Vert \diff_{\vec{\vu}} \Vert_{\infty} \right) \leq \frac{l+1}{4}1_{\ram}(p)\ln(p). $$
	Finally we estimate the third term $\IIIrm_p$:
	$$ \E_p^2(1_{\ram(\vec{\vu})}) \leq (1-\P_{\unr,p}^{\frac{l+1}{4}}) \leq 1_{\ram}(p). $$
	Putting the estimates for $\Irm_p$, $\IIrm_p$, and $\IIIrm_p$ together in the case that $p>B$ we get
	\begin{align*}
	\E_p^2 &\leq \frac{l+5}{4}\ln(p)1_{\ram}(p) + \frac{l+1}{4}\ln(p) 1_{\ram}(p) + 1_{\ram}(p) \\
	&\leq \left( \frac{l+5}{4} + \frac{l+5}{4}\right)\ln(p) 1_{\ram}(p) \\
	&= \frac{l+5}{2}\ln(p) 1_{\ram}(p).
	\end{align*}
	To finish our result we use the Lemma just outside this proof environment.\footnote{We have decided to label this theorem because it is a critical juncture where discriminants for $K$ meet conductors using N\'eron-Ogg-Shafarevich.
	This seems to be the critical step in relating the two.  }
\end{proof}

\begin{lemma}
	\begin{equation}
	\sum_{p \mid \vert \Disc(K/\QQ)\vert, p>B} \ln(p) \leq 2 \left ( \frac{\ln \vert \Disc(F/\QQ) \vert }{[F:\QQ]} + \frac{\ln \vert \Cond(E/F) \vert }{[F:\QQ]} \right) 
	\end{equation}
\end{lemma}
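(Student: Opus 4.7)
The plan is to combine the conductor-ramification theorem from earlier in \S\ref{S:geometry-and-galois}, tameness afforded by $p > B$, and a tower/Galois-closure argument to reconcile the normalization on the right-hand side. By that theorem, for $p \in S := \{p > B : p \mid |\Disc(K/\QQ)|\}$ there exists a place $v$ of $F$ over $p$ dividing $\Cond(E/F) \cdot \Diff(F/\QQ)$, the option $w \mid l$ being excluded by $B \geq l$. Moreover, from $[K:\QQ] \leq d_0 \cdot |\GL_2(\ZZ/30)| \cdot |\GL_2(\FF_l)| \leq B$ we get $p > [K:\QQ]$ for each $p \in S$, so $K/\QQ$ (and its Galois closure $\tilde K$) is tamely ramified at every such $p$.

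The next step is to pass to the Galois closure $\tilde K$ of $K/\QQ$, which is ramified at exactly the same rational primes as $K$, so the left-hand sum is unchanged. In the Galois tame setting, for each $p \in S$ the exponent of $p$ in $|\Disc(\tilde K/\QQ)|$ equals $[\tilde K:\QQ]\bigl(1 - 1/e_p\bigr) \geq [\tilde K:\QQ]/2$, since $e_p \geq 2$. Summing gives
$$
\sum_{p \in S}\ln p \;\leq\; 2\,\frac{\ln|\Disc(\tilde K/\QQ)|}{[\tilde K:\QQ]}.
$$
A tower decomposition through $\QQ \subset F \subset \tilde K$ now yields
$$
\frac{\ln|\Disc(\tilde K/\QQ)|}{[\tilde K:\QQ]} \;=\; \frac{\ln|\Disc(F/\QQ)|}{[F:\QQ]} \;+\; \frac{\ln|\Disc(\tilde K/F)|}{[\tilde K:\QQ]},
$$
and I would bound the second summand by $\ln|\Cond(E/F)|/[F:\QQ]$. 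For this last bound, N\'eron--Ogg--Shafarevich applied to the tower $F \subset \tilde K$ shows that for $p > B > l$ the relative discriminant $\Disc(\tilde K/F)$ is supported at places of bad reduction of $E/F$, and the tame Galois formula gives the exponent of $v$ in $\Disc(\tilde K/F)$ as $[\tilde K:F]\bigl(1-1/e_{w/v}\bigr) \leq [\tilde K:F]\, c_v$.

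The principal obstacle is the bookkeeping in the $\Cond$ step: one must verify that ramification of $\tilde K/F$ at a place $v \in V(F)$ that is already ramified in $F/\QQ$ is not double-counted, and that places unramified in $F/\QQ$ but ramified in $\tilde K/F$ come from bad reduction (so feed into $\Cond(E/F)$ rather than $\Diff(F/\QQ)$). Tameness plus the explicit description of $F = F_0(\sqrt{-1}, E_0[30])$ as a Galois extension of $F_0$ with $E = E_0 \otimes_{F_0} F$ semistable make this tractable. The factor of $2$ in the lemma corresponds exactly to the worst-case value of $(1-1/e)^{-1}$ as $e \geq 2$, which is where tameness and Galois symmetry feed in.
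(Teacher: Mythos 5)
Your strategy is genuinely different from the paper's.  The paper argues one rational prime at a time: for each $p>B$ dividing $|\Disc(K/\QQ)|$ it shows the local inequality $\ln p \leq \frac{2}{[F:\QQ]}(-\ln|\Disc(F/\QQ)|_p - \ln|\Cond(E/F)|_p)$ by expanding each side as a sum over $w\in V(F)_p$, using $d(w/p)=e(w/p)-1$ (tameness) and $\sum_{w|p}f(w/p)e(w/p)=[F:\QQ]$, and then summing over $p$.  You instead pass to the Galois closure $\tilde K/\QQ$, which makes the ramification data uniform above each $p$ and lets you bound $\sum_{p\in S}\ln p$ by a global discriminant, and then you run a tower decomposition through $F$.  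That is a legitimate alternative route in spirit, and the idea of Galois closure to get $e_p\geq 2$ uniformly is a nice simplification.

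The gap is that your chain of inequalities uses the \emph{full} global quantities $\ln|\Disc(\tilde K/\QQ)|$ and $\ln|\Disc(\tilde K/F)|$ in the middle steps.  In step 4 you only \emph{lower}-bound the contributions from $p\in S$ to $\ln|\Disc(\tilde K/\QQ)|$, which is fine as an inequality; but then you need $\frac{\ln|\Disc(\tilde K/\QQ)|}{[\tilde K:\QQ]}\leq \frac{\ln|\Disc(F/\QQ)|}{[F:\QQ]}+\frac{\ln|\Cond(E/F)|}{[F:\QQ]}$, and that simply cannot hold: $\ln|\Disc(\tilde K/\QQ)|$ contains contributions from \emph{every} ramified prime, including $p=l$, $p\mid 30$, and the finitely many wild primes $p\leq B$, and these can be arbitrarily large relative to $\Disc(F/\QQ)$ and $\Cond(E/F)$ (for instance, $K\supset\QQ(\zeta_l)$ is wildly ramified at $l$ regardless of $\Cond(E/F)$).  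Your own phrasing --- ``for $p>B>l$ the relative discriminant $\Disc(\tilde K/F)$ is supported at places of bad reduction'' --- shows you only control the $p>B$ part of $\Disc(\tilde K/F)$, yet the inequality as written bounds the full relative discriminant by the conductor, which does not follow.

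The fix is to replace every discriminant in the argument by its ``$S$-part'' $D_S(\,\cdot\,)=\prod_{p\in S}p^{v_p(|\Disc(\,\cdot\,)|)}$.  The tower formula $\ln|\Disc(\tilde K/\QQ)|=[\tilde K:F]\ln|\Disc(F/\QQ)|+\ln|\Disc(\tilde K/F)|$ is a product over primes, so it restricts cleanly to $S$-parts; your step-4 bound becomes $\sum_{p\in S}\ln p\leq 2\ln D_S(\tilde K/\QQ)/[\tilde K:\QQ]$ with equality up to the factor $1-1/e_p$; the bound $D_S(F/\QQ)\leq|\Disc(F/\QQ)|$ is trivial; and then the tame-Galois exponent computation together with N\'eron--Ogg--Shafarevich gives $\ln D_S(\tilde K/F)/[\tilde K:\QQ]\leq \ln|\Cond(E/F)|/[F:\QQ]$.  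With that modification the proof you are proposing closes, and it is arguably cleaner than the paper's because the Galois closure sidesteps the issue of unramified places above a ramified $p$.  As written, however, the proposal has a real gap precisely because the restriction to $p>B$ is silently dropped when you introduce the global discriminants.
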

\begin{proof}
	The hard part of this formula is not getting too greedy, it seems.
	For $p>B$ we know that 
	 $$ p \mid \vert \Disc(K/\QQ) \vert \iff p \mid \vert \Disc(F/\QQ) \vert \mbox{ or } p \mid \vert \Cond(E/F) \vert. $$
	It is enough to show for each prime $p$ with $p>B$ and $p \mid \vert \Disc(F/\QQ) \vert$ we have 
	 $$ \ln(p) \leq 2 \left( \frac{-\ln \vert \Disc(F/\QQ)\vert_p  -\ln \vert \Cond(E/F) \vert_p }{[F:\QQ]}\right).$$
	Note that we are using $p$-adic absolute values to take the $p$-parts of these integers. 
	We observe that
	$$ - \ln \vert \Disc(F/\QQ)\vert_p = \sum_{w \in V(F)_p} f(w/p)d(w/p) \ln(p), $$
	$$ - \ln \vert \Cond(F/\QQ)\vert_p = \sum_{w\in V(F)_p} f(w/p)c_E(w) \ln(p) $$
	where we have used 
	 $$ \Cond(E/F) = \prod_w P_w^{c_E(w)}, \mbox{ \ \ \  } \Disc(F/\QQ) = \prod_w P_w^{d(w/p_w)}$$
	From \S\ref{S:diff-and-disc} we know that $d(w/p_w) = e(w/p_w)- 1$ since $p_w >B$. 
	Hence, it is enough to show that for each $p \mid \vert \Disc(K/\QQ) \vert$ that
	\begin{equation}\label{E:bigger-than-one}
	\frac{2 \sum_{w\vert p} \left( f(w/p)(e(w/p)-1) + f(w/p)c_E(w) \right) }{[F:\QQ]} \geq 1. 
	\end{equation}
	Using that $p>B$ and $2(e(w/p)-1) \geq e(w/p)$ together with the fact that $\sum_{w\in V(F)_p} f(w/p)e(w/p) = [F:\QQ]$ we get 
	\begin{align*}
	\mbox{ LHS of \eqref{E:bigger-than-one} } &\geq \frac{[F:\QQ] + \sum_{w\in V(F)_p} 2f(w/p)c_E(w) }{[F:\QQ]} \\
	&= 1+ 2\frac{\sum_{w\in V(F)_p}f(w/p) c_E(w) }{[F:\QQ]}.
	\end{align*}
	This proves the result. 
	We note that it is strictly greater than one since the initial theta data hypothesis says that there is a \emph{non-empty} set of primes in $V^{\bad}_{\mod}$ of bad reduction.
\end{proof}

%------------------------------------------
\subsection{Computation at Small Places}
%------------------------------------------

Over the small places we have\footnote{For $f(x)$ and $g(x)$ positive functions of a single real variable we write $f(x) \asymp g(x)$ as $x\to \infty$ if and only if $f(x) = O(g(x))$ and $g(x) = O(f(x))$ as $x\to \infty$. }
\begin{equation}\label{E:small-estimate}
\sum_{p \leq B} \E_p^2 \leq (l+3)\ln(B) \pi(B) \asymp l^5d_0.
\end{equation} 

\begin{proof}
	In the situation where $p\leq B_{l,d_0}$ we have worse bounds for $\E_p^2$. 
	We will not care so much about these bounds as they turn into the constant which appears in Szpiro's inequality.\footnote{On some level, of course, we do care because we would like better constants.
	This is secondary achieving \emph{some} Szpiro though. }
	
	In the first term $\Irm_p$ we use
	$$\Ical_{\vec{\vu}} \subset D_{\vec{\vu}}\left(0;p^{j+1} \prod_{i=0}^j \frac{e(\vu_i/p)}{\ln(p)\exp(1)} \right) \subset D_{\vec{\vu}}(0;p^{j+1}B^{j+1}).$$
	This gives 
	$$\logmubar_{\vec{\vu}}(\overline{\Ical_{\vec{\vu}}}) \leq \log_p(p^{j+1}B^{j+1})\ln(p) = (j+1) \ln(pB),$$
	which in turn gives (for $p$ ramified)
	$$\E_p^2( \logmubar_{\vec{\vu}}(\overline{\Ical_{\vec{\vu}}})) \leq \E( (j+1) \ln(pB)) = \frac{l+5}{4}\ln(pB) \leq \frac{l+5}{2}\ln(B), $$
	where the last inequality used $p\leq B$.
	
	For term $\IIrm_p$ involving differents, we have 
	$$\E_p^2( \Vert \diff_{\vec{\vu}} \Vert_1 - \Vert \diff_{\vec{\vu}} \Vert_{\infty} ) = \frac{l+1}{4} \overline{\diff}_p \ln(p).$$ 
	Since $\diff(\vu/p) \leq 1 - 1/e(\vu/p) + \ord_p(e(\vu/p))$ we get $\diff(\vu/p) \leq 1+ \ord_p[K:\Q]$ which proves 
	$$ \overline{\diff}_p \leq \log_p( \E(p^{1+\ord_p[K:\Q]})) = \log_p( p^{1+\ord_p[K:\Q]})=1+\ord_p([K:\Q]).$$
	Hence we have
	$$ \E_p^2( \Vert \diff_{\vec{\vu}} \Vert_1 - \Vert \diff_{\vec{\vu}} \Vert_{\infty}  ) \leq \frac{l+1}{4} \left( 1+ \ord_p[K:\Q]) \right)\ln(p) \leq \frac{l+1}{2}\ln(B) $$
	Finally in term $\IIIrm_p$ we have 
	$$ \E_p^2(1_{\ram}(\vec{\vu})) \leq (1-\P_{\unr,p}^{\frac{l+1}{4}}) \leq 1_{\ram}(p).$$
	Putting the estimates for $\Irm_p$, $\IIrm_p$, and $\IIIrm_p$ together we get
	\begin{align*}
	\sum_{p\leq B} \E_p^2 &\leq \sum_{p\leq B} \left[ \frac{l+5}{2}\ln(B)+ \frac{l+1}{2}\ln(B) + 1 \right]\\
	&\leq ((l+3)\ln(B) +1)\pi(B) \\
	&\leq (l+3)\ln(B)\pi(B).
	\end{align*}
	
	This gives our main result. 
	The asymptotic is then derived by using bounds in the prime number function $\pi(x)$.
	One such bound is Dusart's bound \cite{Dusart2010} which states that for $x>1$ we have 
	\begin{equation}\label{E:Dusart}
	\pi(x) \leq \frac{x}{\ln(x)} \left( 1+ \frac{1.3}{\ln(x)} \right).
	\end{equation}
	This then shows, using $B = 276480 l^4d_0$, that  
	 $$ (l+3)\ln(B)\pi(B) \leq (l+3) B \left(1+ \frac{1.3}{\ln(B)} \right) \asymp l^5 d_0 \mbox{ \ \ \ \ as } l\to \infty. $$
\end{proof}

\begin{remark}
	Using a slightly better form of Dusart's bound gives an $1/\ln(B)^2$ correction term. 
\end{remark}

%------------------------------------------
\subsection{Proof of Explicit Szpiro}
%------------------------------------------

	Working from 
	 \begin{equation}\label{E:reduction}
	 \deghatu_{\lgp}(P_{\Theta}) -\deghatu(P_q) \leq \sum_p \E_p^2  
	 \end{equation}
	The left hand side of \eqref{E:reduction} becomes
	  $$ \left( \frac{l(l+1)}{12} -1 \right)\left(\frac{1}{2l} \right) \frac{\ln \vert \Delta^{\min}_{E/F} \vert } {[F:\Q]},$$
	 and the right hand side of \eqref{E:reduction} becomes 
	 \begin{align*}
	 \sum_p \E_p^2 =& \sum_{p\leq B } \E_p^2 + \sum_{p>B} \E_p^2 + \E_{\infty}^2\\
	 \leq&(l+3)\ln(B)\pi(B) + \frac{l+5}{2} \cdot  2 \left ( \frac{\ln \vert \Disc(F/\QQ) \vert }{[F:\QQ]} + \frac{\ln \vert \Cond(E/F) \vert }{[F:\QQ]} \right) \\
	 & + \left( \frac{l+5}{4} \right)\ln(\pi)
	 \end{align*} 
We now divide both sides by $(l+5)$. 
The coefficient of the left hand side becomes 
 $$\left( \frac{l(l+1)}{12} -1 \right)\left(\frac{1}{2l} \right) \left(\frac{1}{l+5}\right) = \frac{l^2+l-12}{24l(l+5)} =: \frac{1}{24+\varepsilon_l},$$
where solving for $\varepsilon_l$ gives 
$$\varepsilon_l = \frac{96 \, {\left(l + 3\right)}}{l^{2} + l - 12}.$$
We now have 
\begin{equation}\label{E:pre-result}
\frac{1}{24+\varepsilon_l} \ln \vert \Delta^{\min}_{E/F} \vert \leq \left[ \ln(B)\pi(B) + \ln(\pi) \right][F:\QQ] + \ln \vert \Disc(F/\QQ)\vert + \ln \vert \Cond(E/F) \vert 
\end{equation}

Finally, using $d_1=276480$ (the upper bound on $[F:F_0]$) so that $B=l^4d_1d_0$, we get
\begin{align*}
\left[ \ln(B)\pi(B) + \ln(\pi) \right][F:\QQ] &\leq l^4d_0 \left( d_1\left( 1+\frac{1.3}{\ln(d_1)}\right) + \ln(\pi) \right) d_1d_0\\
&\leq A_0d_0^2l^4 + B_0 d_0 
\end{align*}
where $A_0 = 84372107405$, and $B_0 = 316495$.
This gives our result after rewriting \eqref{E:pre-result} multiplicatively with new bounds.

\bibliographystyle{amsalpha}
\bibliography{IUT.bib}

%\printbibliography

\end{document}